\documentclass[11pt]{article}
\usepackage[a4paper, margin={1.2in}]{geometry}
\usepackage[utf8]{inputenc}
\usepackage{authblk}

\usepackage{microtype}

\frenchspacing

\usepackage[T1]{fontenc}
\usepackage{amssymb,amsfonts,amsmath,amsthm,amscd}
\usepackage{bussproofs}
\usepackage[all]{xy}
\usepackage{comment}
\usepackage{tikz-cd}
\usetikzlibrary{arrows.meta}
\usepackage{stmaryrd}
\usepackage[shortlabels]{enumitem}
\usepackage{color,mathdots,setspace,framed,graphicx,natbib,latexsym,nicefrac,longtable,mathtools}
\usepackage[textwidth=3.5cm]{todonotes}
\usepackage{graphicx}
\usepackage{lipsum}
\usepackage{scalerel}
\usepackage{svg}
\usepackage{lipsum}
\usepackage{soul,xcolor}

\usepackage{tikz}
\usetikzlibrary{ipe}

\usepackage{chngcntr}
\counterwithin{figure}{section}

\usepackage[colorlinks=false]{hyperref}


\newtheorem{theorem}{Theorem}[section]
\newtheorem{lemma}[theorem]{Lemma}
\newtheorem{prop}[theorem]{Proposition}
\newtheorem{corollary}[theorem]{Corollary}

\theoremstyle{definition}
\newtheorem{definition}[theorem]{Definition}

\theoremstyle{remark}
\newtheorem*{remark}{Remark}

\newtheorem{question}{Question}
\DeclareMathOperator{\ACA}{ACA}

\newcommand{\BS}[1]{\operatorname{B\Sigma}_{#1}}
\DeclareMathOperator{\CA}{CA}
\newcommand{\qcr}[1]{\ulcorner #1\urcorner}

\newcommand{\Con}[1]{{\operatorname{Con}\!\left(#1\right)}}
\DeclareMathOperator{\CT}{CT}
\DeclareMathOperator{\DLO}{DLO}
\DeclareMathOperator{\EA}{I\Delta_{0}+{\exp}}
\newcommand{\form}{\operatorname{Form}}
\newcommand{\gn}[1]{\operatorname{\mathsf{N}}_{#1}}
\newcommand{\hn}[1]{\operatorname{\mathsf{K}}_{#1}}
\newcommand{\HH}{\mathcal{H}}
\DeclareMathOperator{\id}{id}
\newcommand{\ihn}[1]{\operatorname{\mathsf{J}}_{#1}}
\newcommand{\IS}[1]{\operatorname{I\Sigma}_{#1}}
\newcommand{\JJ}{\mathcal{J}}
\newcommand{\KK}{\mathcal{K}}

\DeclareMathOperator{\LPA}{\mathcal{L}_{\PA}}

\newcommand{\MM}{\mathcal{M}}
\DeclareMathOperator{\N}{\mathbb{N}}
\DeclareMathOperator{\name}{name}
\newcommand{\NN}{\mathcal{N}}
\newcommand{\num}[1]{\underline{#1}}

\DeclareMathOperator{\PA}{PA}
\DeclareMathOperator{\PAm}{PA^{--}}
\newcommand{\Proof}{\operatorname{Proof}}

\newcommand{\rfn}{\operatorname{RFN}}

\newcommand{\Sat}[1]{\operatorname{Sat}_{#1}}
\newcommand{\sn}[1]{\operatorname{IS}(#1)}
\DeclareMathOperator{\subst}{subst}

\newcommand{\Th}[1]{\operatorname{Th}(#1)}
\newcommand{\Thr}[2]{\operatorname{Th}_{#1}(#2)}

\newcommand{\tn}[1]{\operatorname{IT}(#1)}
\newcommand{\tnd}[1]{\operatorname{ITD}(#1)}
\newcommand{\tnf}[1]{\operatorname{ITF}(#1)}
\DeclareMathOperator{\tp}{tp}
\newcommand{\tuple}[1]{\langle #1 \rangle}

\DeclareMathOperator{\Z}{Z}
\DeclareMathOperator{\Zt}{Z_{2}}
\DeclareMathOperator{\ZF}{ZF}
\DeclareMathOperator{\ZFC}{ZFC}
\DeclareMathOperator{\Prov}{Prov}

\newcommand{\as}[2]{\forall #1 \! < \! #2 \,}

\tikzset{
  symbol/.style={
    draw=none,
    every to/.append style={
      edge node={node [sloped, allow upside down, auto=false]{$#1$}}}
  }
}

\mathchardef\mhyphen="2D



\title{
    Tightness and solidity\\ 
    in fragments of Peano Arithmetic\\
}
\author{Piotr Gruza\thanks{University of Warsaw, Doctoral School of Natural and Exact Sciences, \texttt{p.gruza3@uw.edu.pl}} , 
Leszek Aleksander Kołodziejczyk\thanks{University of Warsaw, Institute of Mathematics, \texttt{lak@mimuw.edu.pl}}
~and Mateusz Łełyk\thanks{University of Warsaw, Faculty of Philosophy, \texttt{mlelyk@uw.edu.pl}}}

\date{\today}


\begin{document}

\maketitle


\begin{abstract}
    It was shown by Visser that Peano Arithmetic has the property that any two bi-interpretable extensions of it (in the same language) are equivalent. Enayat proposed to refer to this property of a theory as \emph{tightness} and to carry out a more systematic study of tightness and its stronger variants that he called neatness and solidity.

    Enayat proved that not only $\PA$, but also $\ZF$ and $\Zt$
    are solid. On the other hand, it was shown in later work by a number of authors that many natural proper fragments of those theories are not even tight.

    Enayat asked whether there is a proper solid subtheory of the theories listed above. We answer that question in
    the case of $\PA$ by proving that for every $n$,
    there exist both a solid theory and a tight but not neat theory strictly between $\IS{n}$ and $\PA$. Moreover, the solid subtheories of $\PA$ can be required to be unable to interpret $\PA$.
    
    We also provide simple examples of proper solid subtheories of $\ZF$ and $\Zt$, as well as further separations between properties related to tightness, including an example of a sequential theory that is neat but not semantically tight in the sense of Freire and Hamkins.
\end{abstract}

\section{Introduction}

Our aim in this paper is to show that a potential very general characterization of Peano Arithmetic ($\PA$) as an axiomatic theory does not work.

To understand the background behind the potential characterization, recall the notion of interpretation (precise definitions of all relevant concepts will be provided in Section \ref{sec:prelim}). Intuitively speaking,
a structure $\MM$ interprets 
a structure $\NN$ if the universe, relations and operations of $\NN$ can be defined in $\MM$, where the universe of $\NN$ can consist of tuples of elements of $\MM$ rather than single elements, and equality in $\NN$ can be an equivalence relation on $\MM$ other than equality. Well-known examples include the interpretation of the field of rationals in the ring of integers, where rationals are given as pairs of integers $\tuple{k,\ell}$ with $\ell \neq 0$ and $\tuple{k,\ell}$ is identified with $\tuple{p,q}$ if $kq = p\ell$; and the interpretation of the field of complex numbers in the field of reals, where $a + bi$ is given as the pair of reals $\tuple{a,b}$.

An interpretation of an axiomatic theory $T$ in a theory $S$ is essentially a uniform recipe for interpreting a model of $T$ in a model of $S$. A pair of interpretations, of $T$ in $S$ and of $S$ in $T$, forms a bi-interpretation if the interpretations are provably mutually inverse, i.e.~each theory proves that composing the interpretations in the appropriate order gives rise to a structure
isomorphic to the original model of the theory. 
The concept of bi-interpretability was originally introduced
in model theory (see \cite{az:quasi-finitely}) but plays an increasingly meaningful role in foundational investigations (cf.~e.g. \cite{pakhomov:escape-tennenbaum, fv:synonymy}). 
Bi-interpretability between theories is much stronger than mutual interpretability: a well-known example is provided by the theories ZF and ZFC + GCH, which are mutually interpretable but not bi-interpretable. In contrast, $\PA$ is bi-interpretable with an appropriate formulation of finite set theory.  

In general, bi-interpretability and provability do not go hand in hand: for example one easily finds examples of theories that are bi-interpretable and yet mutually inconsistent. 
However, Visser \cite{visser:categories} showed that 
there are incomplete first-order theories, such as $\PA$, which make bi-interpretability collapse to logical equivalence
on the class of their extensions. 
More precisely, Visser proved that $\PA$ has the following curious property, later called \emph{tightness}: if two extensions $T_1$ and $T_2$ of $\PA$, still in the same language, are bi-interpretable, then in fact $T_1 \equiv T_2$.
Note that every complete theory will be tight in this sense. In fact, tightness is a kind of internal completeness property: a complete theory is one whose models are all elementarily equivalent, while if a theory $T$ is tight and a pair of interpretations $\mathsf{I}$ and $\mathsf{J}$ gives a bi-interpretation of $T$ with itself, then $T$ can prove that the structures described by $\mathsf{I}$ and by $\mathsf{J}$ are elementarily equivalent to one another, and in fact to the original model of $T$ in which the interpretations were applied.

Enayat \cite{enayat:variations} initiated a more systematic study of tightness and related concepts. In particular, he introduced semantical variants of tightness in which one considers interpretations between models of $T$ rather than between theories extending $T$. The strongest property that he considers, called \emph{solidity}, requires that any two models $\MM, \NN \vDash T$ have to be definably isomorphic as soon as they satisfy a weaker form of bi-interpretability (essentially, one of the two interpretations between $\MM$ and $\NN$ is only assumed to be a one-sided rather than two-sided inverse of the other). Thus, solidity is an internalized categoricity property rather than mere internalized completeness.

Enayat showed that $\PA$ is not just tight but also solid, and so are other foundationally important axiom schemes such as $\ZF$ set theory and second-order arithmetic $\Zt$.
On the other hand, it was gradually realized that natural proper fragments of those theories are not even tight. In particular:
\begin{enumerate}[(a)]
\item neither Zermelo set theory $\Z$ nor $\ZFC^-$ (i.e.~$\ZFC$ without Power Set and with collection instead of replacement) is tight \cite{fh:bi-interpretation-set};
\item for each $n$, the fragment $\Pi^1_n\text{-}\CA$ of $\Zt$ is not tight 
\cite{fw:non-tightness};
\item for each $n$, no $\Pi_n$-axiomatized
fragment of true arithmetic (thus, \emph{a fortiori}, of $\PA$) is tight \cite{el:categoricity-like}.
\end{enumerate}

Freire and Hamkins \cite{fh:bi-interpretation-set} noted that the proofs of tightness and solidity for set theory ``seem to use the full strength of ZF''. Similarly, Freire and Williams \cite{fw:non-tightness} referred to their results as ``evidence that tightness characterizes $\Zt$ (...) in a minimal way''. This situtation gave salience to a question asked already by Enayat \cite[Question 3.2]{enayat:variations}: do any any of $\PA$, $\ZF$, $\Zt$ have a proper solid subtheory?

In the context of 
set theory, it was stated in \cite{fw:non-tightness} that settling Enayat's question (presumably in the negative) would amount to ``a profound characterization of ZF''.
\emph{Mutatis mutandis}, this would be even more true in the case of $\PA$, due to both the very basic nature of first-order arithmetic and to the special role played by the induction scheme in tightness arguments: it was already observed in \cite[just before Question 3.1]{enayat:variations} that all solid theories known up to that point that interpret a minimal amount of arithmetic also imply the full induction scheme under that interpretation.

In our view, the ``right'' question to ask is not quite whether the theories mentioned above have \emph{arbitrary} solid subtheories. Taken literally, that question is vulnerable to trivial counterexamples: for instance, it is not difficult to show that ``either the axioms of ZF hold, or the universe has one element and $\in$ is the empty relation'' is a solid theory. Rather, one should ask whether there are solid proper subtheories that imply some reasonably strong axioms giving the scheme at hand its appropriate (arithmetical or set-theoretic) character: say
$\IS{1}$ or $\EA$ in the case of $\PA$ and Zermelo set theory in the case of ZF. (One not quite trivial solid subtheory of ZF that still seems to ``leave out too much'' is ZF without infinity but with the axiom that every set has a transitive closure; see \cite[Theorem 18]{el:categoricity-like}.) 

Here, we show that even after such an arguably natural modification, the answer to Enayat's Question 3.2 for $\PA$ is positive; as a consequence, we show that there
can be arithmetical solid theories that do not imply the full induction scheme. More precisely, we prove that for every $n$ there is a solid theory strictly between $\IS{n}$ and $\PA$. Our examples have a disjunctive nature like the trivial one above, but their construction is considerably more involved: essentially, they state ``either $\PA$ holds, or we are in a particular pointwise definable model of ${\IS{n}} + {\neg \IS{n+1}}$''.
To make this work, we have to ensure both that the theory
of those pointwise definable models is solid, and that it has a well-calibrated interpretability strength sufficiently different from (in our case: greater than) that of $\PA$.

The upshot of our result is that there is no hope of characterizing $\PA$ as a minimal solid theory, at least in the realm of theories ordered by logical implication. In fact, we are able to show that the same holds true for the coarser (pre-)order of interpretability as well. Nevertheless, our work points to a more subtle sense in which it remains open whether $\PA$ could be minimal solid; we discuss this briefly in Section \ref{sec:conclusion} of the paper.

Another very natural question related to tightness, solidity and their cousins is whether these concepts are actually distinct. Again, a naive version of this question admits some relatively trivial positive answers, due to the fact that syntactically defined properties like tightness apply to all complete theories, whereas semantically defined ones like solidity in general do not. However, separating two syntactically defined tightness-like properties, or getting any separating example at all that would be a computably axiomatized theory subject to G\"odel's theorems,
was a significant challenge. We provide the first separations ``of the nontrivial kind'', showing for example that there are arbitrarily strong proper subtheories of $\PA$ that distinguish tightness from a property that is likewise syntactically defined but has the bi-interpretability assumption weakened as in the case of solidity.

To prove our results, we need constructions that ensure the existence of some specific interpretations, isomorphisms etc.~but not of others. For such purposes, we rely on a wide variety of methods from the model and proof theory of arithmetic. The tools we make use of include, among other things: axiomatic truth theories, flexible formulas, 
pointwise definable models, and a very weak pigeonhole principle known as the cardinality scheme.

The remainder of the paper is organized as follows.
We review basic background 
concepts and facts in Section \ref{sec:prelim}. 
In Section \ref{sec:groundwork}, we discuss and develop 
some more advanced background material.
In Section \ref{sec:solidity-below}, we the prove our result
on solid proper subtheories of $\PA$ in its basic form, and in Section \ref{sec:refinements} we present some refinements.
In Section \ref{sec:separation}, we prove separations
between tightness, solidity, and other similar properties.
Section \ref{sec:soa} discusses some proper solid subtheories of $\ZF$ and $\Zt$, including a proper solid subtheory of $\Zt$ containing $\ACA$. We summarize our work and state some open problems in Section \ref{sec:conclusion}.

\section{Preliminaries}\label{sec:prelim}

A few general conventions: to avoid irrelevant complications, all languages considered in this paper are finite.
If a theory $T$ is fixed or clear from the context, then  $\mathcal{L}_T$ denotes the language of $T$. 
Unless explicitly stated otherwise, whenever we write $\MM \vDash T$ or say that $\MM$ is a model of $T$, we tacitly assume that $\MM$ is an $\mathcal{L}_T$-structure and not one in a strictly larger language. (Without this assumption, we would have to replace ``model of $T$'' with the cumbersome ``$\mathcal{L}_T$-reduct of a model of $T$'' in all sorts of statements about interpretability properties of structures -- see e.g.~Definitions \ref{def:sem-tight}--\ref{def_solidity} or Lemma \ref{lem:no-mix-ct-pa}.)
Given a formula $W(x)$, we may sometimes write $x\in W$ instead of $W(x)$,
mainly in order to be able to substitute $\forall x \! \in \! W \ldots$ for the more cumbersome $\forall x\,(x \in W \rightarrow {\ldots})$.

\subsection{Interpretability}

We expect that the reader has at least an intuitive understanding of what interpretations are in logic. 
As mentioned in the introduction, an interpretation of a structure $\NN$ in a structure $\MM$ is roughly a definition of the domain and relations of $\NN$ inside $\MM$, while an interpretation of a theory $T$ in a theory $S$ is a uniform recipe for interpreting models of $T$ in models of $S$. However, since we are going to prove theorems about interpretations and interpretability, we need to be somewhat precise, and thus we provide a more formal discussion of these concepts. Those formal details can be rather boring, so the reader may consider just skimming the present subsection at first and referring back to it as needed.

For the purpose of giving an official definition of interpretations, we pretend that all languages are purely relational. There is no harm in doing so, thanks to the usual transformation of arbitrary languages into relational ones that replaces each $n$-ary function symbol with an $(n+1)$-ary relational symbol. It is known that this transformation  applies not only to formulas, but also induces a (feasible) transformation of proofs in a theory $T$ into proofs
in its relational analogue $T^\mathrm{rel}$; for details, see e.g.~\cite[Section 7.3]{visser:inside-exp}. 

\paragraph{Translations.} The formal definition of interpretation starts with the
notion of \textit{translation}. If $\mathcal{L}_1$ and $\mathcal{L}_2$ are first-order languages,
then a
\emph{translation} $\mathsf{M}$ from $\mathcal{L}_1$ to $\mathcal{L}_2$ is determined by specifying:

    \begin{enumerate}[(i)]
        \item a unary $\mathcal{L}_2$-formula $\delta_{\mathsf{M}}(x)$, sometimes called 
        the \emph{domain formula};
        
        \item for every $n$-ary relation symbol $P$ of $\mathcal{L}_1$ (including equality), an $\mathcal{L}_2$-formula $P^{\mathsf{M}}(\bar{y})$ with exactly $n$ free variables, such that $\vdash 
        P^{\mathsf{M}}(y_1,\ldots,y_n)\rightarrow \bigwedge_{i\leq n}\delta_{\mathsf{M}}(y_i)$.
    \end{enumerate}

If $\Gamma$ is a class of $\mathcal{L}_2$-formulas, then we say that the translation $\mathsf{M}$ is $\Gamma$-\emph{restricted} if $\delta_{\mathsf{M}}$ and all the formulas $P^\mathsf{M}$ belong to $\Gamma$.

\begin{remark}
The intention behind the definition of translation is that
$\delta_\mathsf{M}$ defines a domain of $\mathcal{L}_1$-objects on which the formulas $P^\mathsf{M}$ define $\mathcal{L}_1$-relations. Our definition of translation (and the definition(s) of interpretation based on it, given below) is not the most general one possible: in particular, our translations are one-dimensional, in the sense that the domain formula always has just one free variable. For our purposes, this is inessential, because (almost) all the theories we consider support a pairing function. For an example (rather distant from our main topic) of a situation in which multi-dimensional interpretations would matter, see the Remark in Section \ref{subsec:tame}.

The requirement that $P^{\mathsf{M}}(y_1,\ldots,y_n)$  logically imply $\delta_\mathsf{M}(y_i)$ for each $i$ is a technical condition that is sometimes useful, and it can be assumed to hold without loss of generality in all contexts that will be relevant to us. So, we simplify things by including it in the definition of translation.  
\end{remark}

Given a {translation} $\mathsf{M}$ from $\mathcal{L}_1$ to $\mathcal{L}_2$, we define the translation $\varphi^\mathsf{M}$
of an $\mathcal{L}_1$-formula $\varphi$ as follows:
$(P(\bar{x}))^{\mathsf{M}}$ is $P^{\mathsf{M}}(\bar{x})$ for a relation symbol $P$ of $\mathcal{L}_1$; the translation commutes with propositional connectives; and $(\forall x\,\psi)^{\mathsf{M}}$ is $\forall x \bigl(\delta_{\mathsf{M}}(x)\rightarrow \psi^{\mathsf{M}}\bigr)$.

\paragraph{Interpretations of structures.} If $\mathsf{M}$ is a translation of $\mathcal{L}_1$ into $\mathcal{L}_2$ and $\NN$ is an $\mathcal{L}_2$-structure, then $\mathsf{M}$ is a \emph{parameter-free interpretation in $\NN$} if $(\delta_{\mathsf{M}})^\NN$ is nonempty and $(=^\mathsf{M})^\NN$ is an equivalence relation on $(\delta_{\mathsf{M}})^\NN$ that is a congruence w.r.t.~each relation $(P^\mathsf{M})^\NN$. In this case, $\mathsf{M}$ and $\NN$ uniquely determine an $\mathcal{L}_1$-structure
whose universe is the set of equivalence classes of $(=^\mathsf{M})^\NN$ and whose relations are as determined by 
$(P^\mathsf{M})^\NN$. We denote this structure by $\NN^\mathsf{M}$, and we say that a model $\mathcal{M}$ is \emph{interpreted without parameters in $\mathcal{N}$ via $\mathsf{M}$}, if $\mathcal{M} = \NN^\mathsf{M}$. 
In general, an \emph{interpretation $\mathsf{M}$ in $\NN$} can be based on formulas $\delta_\mathsf{M}$ and $P^\mathsf{M}$ that use parameters from $\NN$: in other words, an interpretation in $\NN$ is essentially the same thing as a parameter-free interpretation in $(\NN,\bar c)$ for some tuple of constants $\bar c$. We write $\mathsf{M}: \NN \rhd \MM$ to indicate that $\mathsf{M}$ is an interpretation of the structure $\MM$ in $\NN$ (so in particular $\MM = \NN^\mathsf{M}$). If the interpretation is clear from context or unimportant, we then omit the reference to it, writing simply $\NN \rhd \MM$ and saying that $\MM$ is interpreted in $\NN$. We say that a model $\mathcal{M}$ is \emph{interpretable} (as opposed to ``interpreted'') 
in $\mathcal{N}$ if there is an interpretation $\mathsf{M}$ in $\mathcal{N}$ such that $\mathcal{M}$ is isomorphic to $\mathcal{N}^{\mathsf{M}}$.

\begin{remark}
One easily notices that $\MM$ is interpretable (resp.~parameter-free interpretable) in $\NN$ if and only if there is a surjection from $\NN$ onto $\MM$ such that the preimage of every parameter-free $\MM$-definable set is definable 
(resp.~parameter-free definable) in $\NN$. Hence our definition  of interpretability is equivalent to the classical model-theoretic one, cf.~e.g. \cite{az:quasi-finitely, hodges:model-theory}. In our context it will be easier to work with the more syntactic approach presented above.
\end{remark}

\begin{remark}
Note that if a translation $\mathsf{M}:\mathcal{L}_1\rightarrow \mathcal{L}_2$ is identity preserving, which means that $x=^\mathsf{M}y$ is $x=y$, then $\mathsf{M}$ is an interpretation in $\NN$ for every $\mathcal{L}_2$-structure $\NN$ for which
$(\delta_{\mathsf{M}})^\NN$ is nonempty.
\end{remark}

\paragraph{Interpretations of theories.} Let $\mathsf{M}$ be a translation from $\mathcal{L}_1$ into $\mathcal{L}_2$, and for $i = 1,2$ let $T_i$ be an $\mathcal{L}_i$-theory. Then $\mathsf{M}$ is an \emph{interpretation of $T_1$ in $T_2$} if $T_2$ proves
that $=^\mathsf{M}$ is an equivalence relation on $\delta_{\mathsf{M}}$ that is a congruence w.r.t.~each relation $P^\mathsf{M}$, and $T_2$ also proves $\varphi^\mathsf{M}$ for each axiom $\varphi$ of $T_1$ (including logical axioms, in particular the non-emptiness of the universe).
We then say that $T_2$ \emph{interprets $T_1$ via $\mathsf{M}$}. Thus, an interpretation of $T_1$ in $T_2$
is given by a fixed set of formulas that provide a parameter-free interpretation of a model of $T_1$ in each 
$\NN \vDash T_2$. 
We write $T_2 \rhd T_1$ ({resp}.~$\mathsf{M}: T_2 \rhd T_1$) to indicate that $T_2$ interprets $T_1$ ({resp}.~via $\mathsf{M}$).
We say that a translation $\mathsf{M}$ is an \emph{interpretation in $T_2$} if it is an interpretation
of the empty theory over the appropriate language in $T_2$.

\paragraph{Composition and the identity interpretation.} Interpretations between structures, or between theories, can
be thought of as morphisms of a category, in that they can be composed and there is always an identity interpretation. 
For any language $\mathcal{L}$, the identity translation $\mathsf{id}_{\mathcal{L}}$ is given by letting  $\delta_{\mathsf{id}_{\mathcal{L}}}$ be $x=x$ and translating  each relation symbol of $\mathcal{L}$ to itself. 
For two translations 
$\mathsf{M} \colon \mathcal{L}_1 \to \mathcal{L}_2$ 
and $\mathsf{N}$ of $\mathcal{L}_2 \to \mathcal{L}_3$, 
we define $\mathsf{M}\mathsf{N} \colon \mathcal{L}_1 \to \mathcal{L}_3$
(note the order in which we write the composition of interpretations) as follows:
   \begin{itemize}
       \item $\delta_{\mathsf{MN}} :=\delta_{\mathsf{N}}\wedge (\delta_{\mathsf{M}})^{\mathsf{N}}.$
       \item $P^{\mathsf{MN}}(\bar{y}):= \bigwedge_{y\in \bar{y}} \delta_{\mathsf{N}}(y)\wedge (P^{\mathsf{M}}(\bar{y}))^{\mathsf{N}}$, for each
       $\mathcal{L}_1$-relation symbol $P$.
   \end{itemize}
 Then for every $\mathcal{L}_1$-formula $\varphi$, the formulas
 $\varphi^{\mathsf{MN}}$ and $(\varphi^{\mathsf{M}})^{\mathsf{N}}$ are logically equivalent. This is enough to define
 composition for interpretations in theories 
 and for parameter-free interpretations in structures.
 If $\MM$ is a model, $\mathsf{N}_1$ is an interpretation
 with parameters in $\MM$, and $\mathsf{N}_2$ is an interpretation with parameters in $\MM^{\mathsf{N}_1}$, 
 then the composition $\mathsf{N}_1\mathsf{N}_2$ is also routinely defined, and it is \emph{unique up to equivalence 
 in $\MM$}: since the parameters used by $\mathsf{N}_2$ correspond to equivalence classes of the $\MM$-definable equivalence relation $=^{\mathsf{N}_1}$, one should choose some representatives of these classes. Clearly any choice of these representatives is good, since $=^{\mathsf{N}_1}$ is a congruence w.r.t. all definable relations of $\MM^\mathsf{N}_1$.
 
 \paragraph{Isomorphism of interpretations.} To define the crucial notion of bi-interpretation and its weaker version, retraction, we need to say what it means for interpretations to be isomorphic.
 Given a language $\mathcal{L}$ and two interpretations $\mathsf{M}_1$, $\mathsf{M}_2$ of $\mathcal{L}$-structures 
 in a structure $\NN$, we say that $\mathsf{M}_1$, $\mathsf{M}_2$ are \emph{$\NN$-isomorphic} 
 (resp.~\emph{parameter-free $\NN$-isomorphic})
 if there is a definable (resp.~parameter-free definable) relation $\iota\subseteq N^2$ such that the domain of $\iota$ is $(\delta_{\mathsf{M}_1})^{\NN}$, the range is $ (\delta_{\mathsf{M}_2})^{\NN}$, 
 and $\iota$ preserves all $\mathcal{L}$-predicates 
 (including $=$). This means that $\iota$ canonically determines an isomorphism between $\NN^{\mathsf{M}_1}$ and $\NN^{\mathsf{M}_2}$. Note that for any interpretation
 $\mathsf{K}$ in $\NN^{\mathsf{M}_1}$,
 the isomorphism
 $\iota$ gives rise to a corresponding interpretation
 $\iota[\mathsf{K}]$ given by the same formulas as $\mathsf{K}$ with parameters shifted by $\iota$
 (so, if $\mathsf{K}$ involves no parameters,
 then $\iota[\mathsf{K}]$ is the same as $\mathsf{K}$).
 
 One can define the notion of embedding between interpretations  in a similar way. 

 \paragraph{Retractions and bi-interpretations of structures.}
    Let $\MM$ and $\NN$ be two structures,
    let $\mathsf{N}$ be an interpretation in $\MM$ and
    let $\mathsf{M}$ be an interpretation in $\MM^{\mathsf{N}}$.
    
    \begin{itemize}
        \item We say that the pair $(\mathsf{N},\mathsf{M})$
        forms a \emph{retraction in $\MM$} if $\mathsf{N}\mathsf{M}$ is $\MM$-isomorphic to the identity interpretation.
        
        \item We say that $\MM$ is \emph{a retract of} $\NN$ if there is a retraction $(\mathsf{N}$, $\mathsf{M})$ in $\MM$ such that $\NN$ is isomorphic to $\MM^{\mathsf{N}}$.
        \item We say that $\mathsf{M}$ and $\mathsf{N}$ form  a \emph{bi-interpretation in $\MM$} if $\mathsf{N}\mathsf{M}$ is $\MM$-isomorphic to the identity interpretation on $\MM$ 
        via isomorphism $\iota$, and $\mathsf{M}\iota[\mathsf{N}]$ is $\MM^{\mathsf{N}}$-isomorphic to the identity interpretation on $\MM^{\mathsf{N}}$.
        \item We say that $\MM$ and $\NN$ 
        are \emph{bi-interpretable} if 
        there is a bi-interpretation 
        $(\mathsf{N}$, $\mathsf{M})$ in $\MM$ 
        such that $\NN$ is isomorphic to $\MM^{\mathsf{N}}$.
    \end{itemize}
Figure \ref{fig:retract} 
below illustrates a retraction.
\begin{figure}
\centering
\begingroup
\tikzset{every picture/.style={scale=0.7, transform shape}}
\input{retraction.tex}
\endgroup
\caption{A retraction. 
The squares and rectangles comprising 
the interpreted structures $\MM^{\mathsf{N}}$ and $\MM^{\mathsf{NM}}$ correspond to the equivalence classes
of $=^{\mathsf{N}}$ and $=^{\mathsf{NM}}$.
The arrow indicates an $\MM$-definable isomorphism between $\MM$ and $\MM^{\mathsf{NM}}$.}
\label{fig:retract}
\end{figure}

\begin{remark}
By an easy argument one can see that bi-interpretability is actually symmetric, which need not be directly obvious from the definition. Similarly, it is equivalent to the standard notion studied e.g.~in \cite{az:quasi-finitely}.
\end{remark}

\begin{remark}\label{rem:biint_iso_aut}
We say that $\MM$ and $\NN$ are \emph{parameter-free bi-interpretable} if the interpretations and isomorphism needed for the bi-interpretation are given by parameter-free formulas. It is reasonably straightforward to show that if
$\MM$ and $\NN$ are {parameter-free bi-interpretable},
then the automorphism groups of $\MM$ and $\NN$ are isomorphic.  
Note that this is not true for general (parametric) bi-interpretability. 
\end{remark}

\paragraph{Retracts and bi-interpretability between theories.}
 Unsurprisingly, the concepts of bi-interpretability and being a retract have their analogues for theories as well, expressing that the appropriate compositions of interpretations are isomorphic to the identity provably
in the appropriate theories. 
Given two interpretations $\mathsf{M}_1, \mathsf{M}_2$ of a language $\mathcal{L}$ in a theory $T$, we say that 
$\mathsf{M}_1$ and $\mathsf{M}_2$ are \emph{isomorphic in $T$} if there is a binary $\mathcal{L}_T$-formula $\iota(x,y)$ which
$T$-provably determines an isomorphism between 
$\mathsf{M}_1$ and $\mathsf{M}_2$.
That is, provably in $T$ the relation 
defined by $\iota$ has $\delta_{\mathsf{M}_1}$ as its domain,
$\delta_{\mathsf{M}_2}$ as range, and preserves all the relations of $\mathcal{L}$, including equality.

Let $T_1$, $T_2$ be theories in languages $\mathcal{L}_1, \mathcal{L}_2$, respectively.
    \begin{itemize}
        \item We say that $T_1$ is a \emph{retract} of $T_2$ if there are translations $\mathsf{M}_1:\mathcal{L}_1\rightarrow \mathcal{L}_2$ and $\mathsf{M}_2:\mathcal{L}_2\rightarrow \mathcal{L}_1$ such that the theory $T_1$ interprets $T_2$ via $\mathsf{M}_2$, the theory $T_2$ interprets $T_1$ via $\mathsf{M}_1$,
        and $\mathsf{id}_{\mathcal{L}_1}$ is isomorphic to $\mathsf{M}_2\mathsf{M}_1$ in $T_1$.
        
        \item We say that $T_1$ and $T_2$ are \emph{bi-interpretable} if there are translations $\mathsf{M}_1:\mathcal{L}_1\rightarrow \mathcal{L}_2$ and $\mathsf{M}_2:\mathcal{L}_2\rightarrow \mathcal{L}_1$ witnessing that $T_1$ is a retract of $T_2$ and $T_2$ is a retract of $T_1$.
    \end{itemize}

\subsection{Categoricity-like notions for first-order theories}

Below we recall four categoricity- and completeness-like properties which emerged in the literature. The concepts of solidity, tightness and neatness were introduced in \cite{enayat:variations}, while semantical tightness was considered for the first time in \cite{fh:bi-interpretation-set}. These notions can be seen to arise by making independent choices with respect to two independent questions:
\begin{enumerate}
    \item Do we want a semantical or a syntactic property?
    \item Do we want to use the notion of retraction or the notion of bi-interpretation?
\end{enumerate}

Perhaps the most natural of the four properties is the one that was introduced last, semantical tightness. As the name suggests, this is a semantical property, and it is based on the notion of bi-interpretability between structures.

\begin{definition}[Semantical tightness]\label{def:sem-tight}
A theory $T$ is \emph{semantically tight} if whenever $\MM$ is a model of $T$ and $(\mathsf{N},\mathsf{M})$ is a bi-interpretation in $\MM$ such that $\MM^\mathsf{N}\vDash T$, then $\mathsf{N}$ is $\MM$-isomorphic to $\id_\MM$ 
(and, as a consequence, $\mathsf{M}$ is $\MM^{\mathsf{N}}$-isomorphic to $\id_{\MM^{\mathsf{N}}}$).
\end{definition}

\begin{remark}
We observe that the semantical tightness of a theory $T$ entails that the bi-interpretability relation between models of $T$ is trivial in the following sense: whenever $\MM$ and $\NN$ are models of $T$ and $\MM$ is bi-interpretable with $\NN$, then $\MM$ and $\NN$ are isomorphic.
\end{remark}

\begin{remark}
One can consider stronger and weaker notions of semantical tightness. For example, one could restrict the notion given above by insisting that the definition of the isomorphism between $\mathsf{N}$ and $\id_\MM$ do not use any parameters other than the ones involved in defining the interpretations and isomorphisms that give rise to the bi-interpretation.
In this paper, whenever we show the semantical tightness of a theory, it always holds in this more restrictive sense, and whenever we show failure of semantical tightness, it applies already in the weaker sense. So, our results do not depend on which of the two definitions was applied.

The original definition of semantical tightness in \cite{fh:bi-interpretation-set} is weaker still: 
the isomorphism between $\MM$ and $\NN$ need not be definable. We think that the definition proposed above is more in the spirit of the other notions considered in this paper (see the definition of solidity below). Moreover, our example of a theory which is neat but not semantically tight from Section \ref{subsec:neat-not-sem-tight} works also for 
definition used by \cite{fh:bi-interpretation-set}.
For an example of a situation in which the distinction
between isomorphism and definable isomorphism would matter, see the Remark in Section \ref{subsec:tame}.
\end{remark}

A stronger property, solidity, is also semantical but starts from notion of retraction.

\begin{definition}[Solidity]\label{def_solidity}
    We say that $T$ is \emph{solid} if whenever $\MM$ is a model of $T$ and $(\mathsf{N},\mathsf{M})$ is a retraction in $\mathcal{M}$ such that $\MM^{\mathsf{N}}\vDash T$, then $\mathsf{N}$ is $\MM$-isomorphic to $\id_{\MM}$.
\end{definition}

\begin{remark}
In analogy to the case of semantical tightness, we can observe that the solidity of a theory $T$ trivializes
the retraction relation between models of $T$: whenever $\MM$ and $\NN$ are models of $T$ and $\MM$ is a retract of $\NN$, then $\MM$ and $\NN$ are isomorphic.
\end{remark}

\begin{remark}
Our definition of solidity is equivalent to the original one given in \cite{enayat:variations}. 
However, the later paper \cite{el:categoricity-like}
used the more restrictive definition according to which the 
definition of the isomorphism between $\mathsf{N}$ and $\mathsf{id}_{\MM}$ can only use the parameters 
involved in defining $\mathsf{N}, \mathsf{M}$ and the isomorphism between $\mathsf{id}_{\MM}$ and $\mathsf{N}\mathsf{M}$. As in the case of semantical tightness, our main results do not depend on which 
of the two definitions is adopted (see, however, Lemma 
\ref{lem:u-tight} in Section \ref{subsec:pam}).
\end{remark}

\begin{remark}
By an easy L\"owenheim-Skolem argument, it is sufficient to verify the semantical tightness/solidity of $T$ only on countable models of $T$.
\end{remark}

Now we pass to two notions of syntactical character, which are properly speaking more ``completeness-like'' than ``categoricity-like'', as they do not imply that certain models are isomorphic but merely that they are elementarily equivalent. 

\begin{definition}[Tightness]
    A theory $T$ is \emph{tight} if whenever $U\subseteq \mathcal{L}_T$ and $V\subseteq\mathcal{L}_T$ are two extensions of $T$ which are bi-interpretable, then $U \equiv V$. 
\end{definition}

\begin{definition}[Neatness]
    A theory $T$ is \emph{neat} if whenever $U\subseteq\mathcal{L}_T$ and $V\subseteq\mathcal{L}_T$ are two extensions of $T$ and $U$ is a retract of $V$, then $U\vdash V$.
\end{definition}
\begin{remark}
By an easy argument, it is enough to verify the tightness/neatness of $T$ only for $U, V$ that are complete extensions of $T$. Hence, in particular, a theory is tight if and only if the bi-interpretability relation on its complete extensions coincides with the identity.
\end{remark}

It is quite easy to see that all the arrows in the diagram below correspond to implications between the four properties:
\begin{center}

\begin{tikzpicture}

\draw (0, 4.3) node {solidity};

\draw (-3, 3) node {neatness};

\draw (3, 3) node {semantical tightness};

\draw (0, 1.7) node {tightness};

\draw [thick, ->] (-2.5, 2.65) -- (-1,1.95);
\draw [thick, ->] (2.5, 2.65) -- (1,1.95);

\draw [thick, ->] (-1, 4.05) -- (-2.5,3.35);
\draw [thick, ->] (1, 4.05) -- (2.5,3.35);

\end{tikzpicture}
\end{center}

As for the question whether the implications are strict, see some results in Section \ref{sec:separation} and the discussion in Section \ref{sec:conclusion}.

\subsection{Basic first-order arithmetic}

In this paper we focus mainly
on theories in the language of arithmetic $\LPA$ (i.e.~the usual language of ordered rings), though occasionally we also consider extensions of $\LPA$ 
by finitely many predicates (see e.g.~Section \ref{sect::truth_theories}) or other languages (see Sections \ref{sec:separation} and \ref{sec:soa}).
Below, we review some basic properties of arithmetical theories that we will need later on, and we fix some notational conventions. A few more advanced topics related to first-order arithmetic are discussed in Section \ref{sec:groundwork}.

Standard notions and classical results in the model and proof theory of first-order arithmetic that we rely on can be found in the monographs \cite{hp:metamathem} and \cite{kaye:models}.

\paragraph{Arithmetical theories.} All the arithmetical theories that we consider extend $\PA^-$, the finitely axiomatized theory of non-negative parts of discretely ordered rings. 
It was shown by Je\v{r}\'abek \cite{jerabek:pam_sequential} that $\PA^-$ is a \emph{sequential} theory, which means roughly that it supports a reasonably behaved theory
of finite sequences of arbitrary elements.
(For a precise definition of sequentiality, 
an important general concept that was in fact discovered
in the study of interpretability \cite{pudlak:some-prime}, 
see e.g.~\cite[Section 2.4]{visser:small_is_very_small}.)
Sequentiality of $\PA^-$ implies that there is an interpretation $\mathsf{M}$ of $\PA^-$ in $\PA^-$, 
with the domain forming a \emph{definable cut} 
in $\PA^-$ (i.e., provably closed downwards under $\le$ under and successor) and the arithmetical operations translated identically,  
and there is a formula $y = x_z$ (intended to mean ``$y$ is the $z$-th element of the sequence $x$'') such that $\PA^-$ proves the statement:
\[\forall s,x,k\, \exists s' \, \forall i,y\, \bigl(\delta_{\mathsf{M}}(k)\wedge i \le k \rightarrow 
(y = s'_i \leftrightarrow (i<k \wedge y=s_i)  \vee (i=k \wedge y=x))\bigr).\]
This says that a given sequence $s$ can be extended/modified by inserting an arbitrary element $x$ in the position
indexed by an arbitrary number $k$ from the domain of $\mathsf{M}$.

Most of the systems we study extend $\EA$, 
also known as elementary arithmetic EA or elementary function arithmetic EFA, a well-known theory whose provably total functions are exactly the
elementary computable functions. This theory extends $\PA^-$ by the induction scheme for all $\Delta_0$ formulas, $\mathrm{I}\Delta_0$, and a single axiom stating that the exponential function is total. In general,
if $\Gamma$ is a class of formulas, then $\mathrm{I}\Gamma$ denotes the extension of $\PAm$ by all
instances of the induction scheme 
for formulas from $\Gamma$, 
while $B\Gamma$ denotes the extension of $\mathrm{I}\Delta_0$ by all
instances of the collection scheme for formulas from $\Gamma$.

\paragraph{Encoding of syntax and set theory.} The theory $\EA$ allows for a convenient encoding of finite sets via the Ackermann membership relation $\in_A$ (``the $x$-th bit in the binary notation for $y$ is 1''). Except for Section \ref{subsec:pam}, when a more fancy coding is required by the weaker setting, all encoding of set-theoretic and syntactical notions is done using $\in_A$. For a syntactical object $o$ (a formula, a term, a proof), $\qcr{o}$ denotes the G\"odel code of $o$.  We let $\mathrm{seq}(x)$ and $\mathrm{len}(s) = x$ be some fixed arithmetical formulas expressing (in terms of $\in_A$) that $s$ is a sequence and that the length of the sequence $s$ is $x$, respectively, and we let $y_x$ or $(y)_x$ stand for the $x$-th element of the sequence $y$. Given an arithmetization of some
language $\mathcal{L}$, we let $\form_{\mathcal{L}}(x)$, $\form^{\leq 1}_{\mathcal{L}}$, $\mathrm{Sent}_{\mathcal{L}}(x)$, $\mathrm{Term}_{\mathcal{L}}(x)$, $\mathrm{Var}(x)$ denote  the arithmetical formulas expressing respectively that $x$ is (the G\"odel code of) an $\mathcal{L}$-formula, 
an $\mathcal{L}$-formula with at most one free variable, an $\mathcal{L}$-sentence, an $\mathcal{L}$-term, and a variable.
Omitting the subscript $\mathcal{L}$ indicates that the intended language is $\LPA$. If $\Gamma$ is class of formulas, then $\form_{\Gamma}(x)$ denotes the arithmetical definition of $\Gamma$. 

We use the notation $\name(x)$ to denote
the arithmetical naming function, which given a number $x$ returns (the code of) a canonical numeral naming $x$, say of
\[\underbrace{(\ldots ((0+1)+1)\ldots +1)}_{x \textnormal{ additions }}.\]
We use $\mathrm{val}(t)$ for the function which given (the code of) a closed term $t$ outputs its value. Given (the codes of) a formula $\varphi$ a variable $v$ and a term $t$, $\subst(\varphi,t)$ denotes the result of substituting the term $t$ for all occurrences of the unique free variable of $\varphi$ (preceded by the renaming of bound variables so as to avoid clashes). The notation $\subst(\varphi,\name(x))$ is often simplified
using the dot convention:
instead of $\subst(\varphi,\name(x))$, we write $\qcr{\varphi(\dot{x})}$. More generally,
$\qcr{\cdots}$ often indicates the application of some syntactical function on codes of formulas: for example, 
$\qcr{\varphi\wedge \psi}$ denotes (the code of) the conjunction of given formulas $\varphi$ and $\psi$. 

\paragraph{Provability, reflection and partial truth predicates.} If $T$ is (an arithmetical definition of) a theory, then $\Prov_T(x)$ stands for the canonical provability predicate, i.e. provability in first-order logic with sentences from $T$
as additional axioms, and $\Proof_T(z,x)$ stands for the formula stating that $z$ is a proof of $x$ in $T$. The sentence $\Con{T}$ is $\neg\Prov_T(\qcr{0\! \neq \!0}).$ If $\Gamma$ is a class of formulas, then $\Gamma\mhyphen\rfn(T)$ denotes the theory extending $\EA$ by the uniform $\Gamma$-reflection scheme for $T$, that is, by all axioms of the form
\[\forall x \,\bigl(\Prov_{T}(\qcr{\varphi(\dot{x})})\rightarrow \varphi(x)\bigr),\]
where $\varphi\in\Gamma$ (we assume that $\varphi$ has at most
one free variable).
We write $\Gamma\mhyphen\Con{T}$ for the extension of $\EA$ by all sentences of the form
\[\forall x \,\bigl(\varphi(x)\rightarrow \Con{T+\qcr{\varphi(\dot{x})}}\bigr),\]
where $\varphi\in\Gamma$. 

\begin{remark}
Let $\Sigma_n$, $\Pi_n$ be the usual formula classes of the arithmetical hierarchy. 
It is easy to prove that $\Pi_n\mhyphen\rfn(T)$ is equivalent to $\Sigma_n\mhyphen\Con{T}$, and vice versa.
\end{remark}

For each $n \ge 1$
and $\Gamma\in\{\Sigma_n,\Pi_n\}$ there is a partial satisfaction predicate $\Sat{\Gamma}(\varphi,x)$ which satisfies the usual inductive Tarskian truth conditions for formulas from $\Gamma$ provably in $\EA$. 
As a consequence, for each $\varphi(x)\in \Gamma$,
\[\EA\vdash \forall x\, \bigl(\Sat{\Gamma}(\qcr{\varphi},x)
\leftrightarrow \varphi(x)\bigr).\]
$\mathrm{Tr}_{\Gamma}(\varphi)$ denotes the canonical truth predicate based on $\Sat{\Gamma}$, that is the formula $\Sat{\Gamma}(\varphi,0)$ applied to sentences $\varphi$ 
(so that the second argument, in this case fixed to be 0, does not matter).

Thanks to the partial truth predicates, for each $n \ge 1$ and $\Gamma\in\{\Sigma_n,\Pi_n\colon\, n\in\mathbb{N}\}$
the theory $\Gamma\mhyphen\rfn(T)$ can be finitely axiomatized, using a fixed finite axiomatization of $\EA$ and the sentence
\[\forall \varphi\,\bigl(\form_{\Gamma}(\varphi)\wedge \Prov_{T}(\varphi)\rightarrow \mathrm{Tr}_\Gamma(\varphi)\bigr).\]
See \cite[Lemma 2.7]{beklemiszew:survey} for details.

\paragraph{Ehrenfeucht's Lemma.} We recall a classical fact about models of $\PA$ (originally due to \cite{ehrenfeucht:discernible}, a proof can also be found e.g.~in \cite[Theorem 1.7.2]{kossak-schmerl}).

Assume that $\MM \vDash \PA$ and $a,b$ are distinct elements of $\MM$ such that $b$ is definable from $a$ -- in other words, $b$ is unique such that $\MM \vDash \varphi(b,a)$, where $\varphi(x,y)$ is a formula  with no free variables other than $x,y$. Then $\tp^\MM(a) \neq \tp^\MM(b)$, where the notation $\tp^\MM(\cdot)$ refers to the complete type of an element in $\MM$.

\paragraph{Models of fragments of PA.} 
We briefly summarize some well-known constructions of models of ${\IS{n}} + {\exp} + \neg\BS{n+1}$ and ${\BS{n}} + {\exp} + \neg \IS{n}$. A detailed presentation can be found in \cite[Chapter IV.1(d)]{hp:metamathem}. 
In our main arguments in Sections \ref{sec:solidity-below}--\ref{sec:separation}, we will rely heavily on the arithmetization of these constructions. 

A typical method of building a model of ${\IS{n}} + {\neg\BS{n+1}}$ is to use \emph{pointwise definable} structures. Given $\MM\vDash \PAm$, the substructure $\mathcal{K}_{n+1}(\MM)$ consists of those elements of $\MM$ which are definable in $\MM$ by a $\Sigma_{n+1}$ formula. 
If $\MM\vDash \IS{n}$, then $\mathcal{K}_{n+1}(\MM) \preccurlyeq_{n+1} \MM$ (that is, the extension is elementary with respect to $\Sigma_{n+1}$ formulas), so $\mathcal{K}_{n+1}(\MM)$ is a model of $\IS{n}$, 
and it satisfies $\exp$ if $\MM$ does.
Assuming $\mathcal{K}_{n+1}(\MM) \vDash \exp$, we also have $\mathcal{K}_{n+1}(\MM) \vDash {\neg\BS{n+1}}$ unless 
$\mathcal{K}_{n+1}(\MM)$ coincides with the standard model.

A typical method of building a model of ${\BS{n}} + {\neg\IS{n}}$
for $n \ge 1$ is to use a sufficiently elementary proper initial segment of a model of enough induction.
If $\MM \vDash \IS{n-1}$, where $n \ge 1$, and $\mathcal{J} \preccurlyeq_{n-1} \MM$ is a proper initial segment of $\MM$, then $\mathcal{J} \vDash \BS{n}$. 

To get $\mathcal{J} \vDash \neg \IS{n}$, we can for instance ensure that $\mathcal{J}$ is nonstandard but has a $\Sigma_{n}$-definition of the standard cut. 
One way of doing that is to let $\mathcal{J}$ be the closure 
of $[0,a]$, where $a$ is a nonstandard element of $\MM$, under
the witness-bounding function for the universal $\Sigma_{n-1}$ formula -- that is, the function that on input $x$ outputs the smallest $y$ such that all $\Sigma_{n-1}$ sentences (whose codes are) smaller than $x$ are witnessed either below $y$ or not at all. If $n = 1$, we may additionally want to close under $\exp$ to have $\mathcal{J} \vDash \BS{1} + \exp$.

If $n \ge 2$ and $a$ is a nonstandard $\Sigma_{n-1}$-definable element of $\MM$, then the segment $\mathcal{J}$ thus obtained coincides with the structure called $\mathcal{H}_{n-1}(\MM)$ in \cite{hp:metamathem}, but we will reserve the letter $\mathcal{H}$ for structures of a different kind (Henkin models). 

\section{Groundwork}\label{sec:groundwork}

In this section, we discuss three topics in first-order arithmetic which are still of essentially preliminary character but require more extensive treatment.
In some cases, this is because we need to refine standard formulations of presumably rather familiar results, 
in others because the results themselves and the concepts underlying them might not be very widely known.

First, we give a hierarchical version of the well-known 
argument showing that a model of PA is an initial segment of any model of arithmetic that it interprets. Then, we recall the notion of flexible formulas and prove the existence of particular variants of flexible formulas that we will later make use of. Finally, we discuss the topic of axiomatic truth theories with multiple nested truth predicates.

\subsection{The formalized categoricity argument}\label{subsec:formalized-categoricity}

It is well-known that by formalizing the classical argument
used to prove that the (second-order) Dedekind-Peano axiomatization of the standard natural numbers is categorical -- or to prove that the standard numbers form an initial segment of any nonstandard model of arithmetic -- one can show that every model of PA embeds as an initial segment into any model of arithmetic that it can interpret. In \cite{enayat:variations}, this observation is attributed to Feferman \cite{feferman:arithmetization}. 

We need a hierarchical version of that result, in which the ground model might not satisfy full PA, but at the same time we have control over the complexity of the interpretation. This version is proved by mimicking the usual argument.

\begin{definition}
An interpretation $\mathsf{M}$ 
(in an $\LPA$-theory or in a structure for $\LPA$) 
is \emph{$\Sigma_n$-restricted} if the formula $\delta_\mathsf{M}$
and all the formulas $P^\mathsf{M}$, for $P$ a symbol
of the interpreted language, are $\Sigma_n$.

\end{definition}

\begin{lemma}\label{ultimate_dedekind}\label{it:indfact-1}
Let $n \ge 1$.
Suppose that $\MM\vDash \mathrm{I}\Delta_0 + \exp$ and that $I\subseteq_e \MM$ is a cut in $\MM$ such that for every $\Sigma_{n}$ formula $\varphi(x)$ (possibly with parameters from $\MM$) we have:
\begin{equation}\tag{\textdagger} \MM\vDash \varphi(0)\wedge\forall x\,\bigl( \varphi(x)\rightarrow \varphi(x+1)\bigr) \, \Rightarrow \, \text{for each } a \in I \textrm{ it holds that }\MM\vDash \varphi(a).
\end{equation}
Suppose further that $\NN\vDash \PA^-$ is interpreted in $\MM$
via a $\Sigma_n$-restricted interpretation $\mathsf{N}$. 

Then there exists an $\MM$-definable relation $\iota(x,y)$ such that $\iota\cap (I\times\mathcal{N})$ is an embedding of $\mathcal{L}_{\PA}$-structures $I\hookrightarrow \NN$. Moreover, $\iota[I]$ is an initial segment of $\NN$, and the definition of $\iota$ refers only to the parameters used by $\mathsf{N}$.
\end{lemma}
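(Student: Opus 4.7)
My approach is the classical formalized Dedekind categoricity argument, adapted for hierarchical control of complexity. I would define $\iota(x,y)$ to assert that $y \in \delta_{\mathsf{N}}$ represents (up to $=^{\mathsf{N}}$) the result of applying the $\mathsf{N}$-successor function $x$ times to $0^{\mathsf{N}}$, formalized by the existence of a finite sequence $s$ in $\MM$ of length $x+1$ with $(s)_0 =^{\mathsf{N}} 0^{\mathsf{N}}$, consecutive elements linked by the $\mathsf{N}$-successor relation, and $(s)_x =^{\mathsf{N}} y$. Totality of $\iota$ on $I$ will be proved by applying the induction hypothesis $(\dagger)$ to the predicate $\exists y\,\iota(x,y)$; injectivity, preservation of $0$, $S$, $+$, $\cdot$, $\leq$, and downward-closedness of the image in $\NN$ then follow from further $\Sigma_n$-inductions on $I$ using the $\PAm$ axioms in $\NN$.

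The principal obstacle is keeping the induction predicate at complexity $\Sigma_n$. The naive $\exists y\,\iota(x,y)$ is $\exists s\,\forall i < x\,\theta$ with $\theta \in \Sigma_n$, which in $\EA$ is at most $\Sigma_{n+1}$ unless one has $\BS{n}$ in $\MM$ --- and we do not. I would circumvent this by packaging Skolem witnesses into $s$: writing the $\Sigma_n$ formulas defining $\delta_{\mathsf{N}}$, $=^{\mathsf{N}}$ and the $\mathsf{N}$-successor relation in the form $\exists z\,\pi(z,\cdot)$ with $\pi \in \Pi_{n-1}$, I extend $s$ to record, next to each $s_i$, witnesses certifying ``$s_i \in \delta_{\mathsf{N}}$'', ``$s_{i+1}$ is the $\mathsf{N}$-successor of $s_i$'', and ``$s_0 =^{\mathsf{N}} 0^{\mathsf{N}}$''. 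Correctness of such an enriched $s$ then becomes a bounded universal of a $\Pi_{n-1}$ matrix, hence $\Pi_{n-1}$ in $\EA$, and $\exists y\,\iota(x,y)$ becomes genuinely $\Sigma_n$.

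With complexity under control, the induction proceeds routinely. The base case $x = 0$ is witnessed by a singleton enriched sequence, and the successor step appends the $\mathsf{N}$-successor of $y$ (which exists in $\delta_{\mathsf{N}}$ because $\NN \vDash \PAm$) together with the requisite witnesses; $\exp$ in $\MM$ provides a code for the enlarged sequence. Injectivity reduces to showing by $\Sigma_n$-induction on $i \leq x$ that any two enriched witnessing sequences $s, s'$ satisfy $(s)_i =^{\mathsf{N}} (s')_i$, using that $=^{\mathsf{N}}$ is a congruence with respect to the $\mathsf{N}$-successor; preservation of $+$, $\cdot$, $\leq$ follows from analogous $\Sigma_n$-inductions on one argument, verifying the recursive characterizations that hold in any model of $\PAm$.

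Finally, to show $\iota[I]$ is an initial segment of $\NN$, I would argue by $\Sigma_n$-induction on $x \in I$ that every $v \in \delta_{\mathsf{N}}$ with $v \leq^{\mathsf{N}} \iota(x)$ equals $\iota(z)$ in the $=^{\mathsf{N}}$-sense for some $z \leq x$, using the $\PAm$ fact $v \leq S(w) \to v \leq w \vee v = S(w)$ in the step; as before, Skolem witnesses are packaged into a sequence to keep the induction predicate at $\Sigma_n$. All defining formulas only invoke the parameters already used by $\mathsf{N}$, yielding the desired parameter dependence of $\iota$.
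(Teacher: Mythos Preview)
Your proposal is correct and follows essentially the same approach as the paper: both define $\iota$ via witnessing sequences of iterated $\mathsf{N}$-successors, and both resolve the complexity obstacle by packaging the $\Sigma_n$ Skolem witnesses into an auxiliary sequence so that the induction predicate is genuinely $\Sigma_n$ rather than $\Sigma_{n+1}$. The paper's presentation uses a separate witness sequence $t$ alongside $s$ and argues the initial-segment property by locating the crossover index rather than by induction on $x$, but these are cosmetic differences.
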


\begin{proof}

Fix $n$, $\MM$, $I$ and $\mathsf{N}$ as above. In particular $\MM^\mathsf{N} = \NN\vDash \PA^-$ and
\[\delta_{\mathsf{N}}, +^{\mathsf{N}}, \times^{\mathsf{N}}, 0^{\mathsf{N}}, 1^{\mathsf{N}}, <^{\mathsf{N}}, =^{\mathsf{N}}\]
are given by $\Sigma_n$ formulas.      
For the purpose of this proof, we introduce the following abbreviation: if $\varphi(x)$ is a $\Sigma_n$ formula, then $s \colon \varphi(x)$ denotes the formula in two free variables $s$ and $x$ resulting from $\varphi(x)$ by deleting the leftmost existential quantifier (or quantifier block) and substituting the variable $s$ for the variable bound by that quantifier. 
Hence if $\varphi(x)$ is $\exists y\, \psi(x,y)$, then $s \colon\varphi(x)$ is $\psi(x,s)$.

We define $\iota(x,y)$ to hold if:
\begin{multline*}
    \exists s,t \,\big[\mathrm{seq}(s,t)
    \wedge \mathrm{len}(s)=\mathrm{len}(t)=x+1
    \\ \wedge s_0 =^{\mathsf{N}} 0^{\mathsf{N}}   
    \wedge \as{i}{x}(t_i \colon (s_{i+1} =^{\mathsf{N}} s_i+^{\mathsf{N}} 1_{\mathsf{N}})) 
    \wedge s_x =^{\mathsf{N}} y\big].
\end{multline*}
Since the formula 
$t_i \colon (s_{i+1}=^{\mathsf{N}}s_i+_{\mathsf{N}}1_{\mathsf{N}})$ 
is $\Pi_{n-1}$, 
the relation $\iota$ is $\Sigma_n$-definable.
We claim that for every $a,a'\in I$ the following holds in $\MM$:
     \begin{align}
        & \exists y\,\iota(a,y),\label{claim:embedding-1}\\
         & \forall y \, \forall y'\,\big(\iota(a,y) \wedge \iota(a,y')\rightarrow y=^{\mathsf{N}} y'\big),\label{claim:embedding-2}\\
        &  \forall y \, \forall y'\,\big(\iota(a,y) \wedge \iota(a',y') \wedge y=^{\mathsf{N}}y'\rightarrow a=a'\big).\label{claim:embedding-3}
     \end{align}

The proof of \eqref{claim:embedding-1} is a straightforward application of $(\dagger)$, 
since $\exists y\, \iota(x,y)$ is a $\Sigma_n$ formula, and the subset of $\MM$ it defines is closed under successor. The latter follows from the fact that the successor operation is provably total in $\PA^-$ and that we can always extend a given sequence by one element.
    
To prove \eqref{claim:embedding-2}, consider $a \in I$ 
and $s, s'$ such that $s\colon\iota(a,y)$ and $s'\colon\iota(a,y')$, and apply $(\dagger)$ to the formula $(s_x =^{\mathsf{N}} s'_x) \vee x > a$ to prove $s_i =^{\mathsf{N}} s'_i$ for all $i \le a$. 
    
Finally, \eqref{claim:embedding-3} can be proved by
using $(\dagger)$ to simulate the $\Sigma_n$ least number principle up to elements of $I$:  if $a \in I$ is the smallest number for which there is $a'$ witnessing that $\iota$ is not injective with respect to $=^{\mathsf{N}}$, we reach an easy contradiction by considering $a-1$ and $a'-1$.
     
This completes the proof that $\iota$ is an embedding from $I$ into $\MM$. Clearly, the definition of $\iota$ does not make use of any parameters beyond the ones used in $\mathsf{N}$. It remains to check that the range of $\iota$ is an initial segment of $\NN$. To this end, consider $a \in I$ and $y,s,t,z$ such that that $s,t \colon \iota(a,y)$
and $z <^{\mathsf{N}} y$. Apply $(\dagger)$
to find $i < a$ such that $s_i \le^{\mathsf{N}} z$ and
$s_{i+1} >^{\mathsf{N}} z$. Then it is easy to 
check that in fact $s_i =^{\mathsf{N}} z$.
\end{proof}

\begin{remark}\label{ultimate_dedekind_pam}
Lemma \ref{ultimate_dedekind} is stated for models of $\EA$, because the proof of the lemma officially relies on
sequence coding by means of the Ackermann interpretation. However, using the sequentiality of $\PA^-$
and essentially the same proof as above but with appropriately modified sequence coding, one can obtain the following variant of the lemma:

\medskip

\noindent{Suppose that $\MM\vDash \PA^-$ and that there is a shortest $\MM$-definable cut $I\subseteq_e \MM$. Suppose further that $\NN\vDash \PA^-$ is interpreted in $\MM$ via an interpretation $\mathsf{N}$. {Then} there exists an $\MM$-definable relation $\iota(x,y)$ such that $\iota\cap (I\times\mathcal{N})$ is an embedding $I\hookrightarrow \NN$. Moreover, the $\iota[I]$ is an initial segment of $\NN$, and the definition of $\iota$ refers only to the parameters used by $\mathsf{N}$.}
\end{remark}

\begin{corollary}\label{lem_n_minimality}
Let $n \ge 1$.
Suppose that $T \supseteq {\IS{n}}$ and that $\mathsf{N}$ is a $\Sigma_n$-restricted interpretation of $\PA^-$ in $T$. Then, provably in $T$, there is an embedding of 
$\mathsf{id}_T$ into $\mathsf{N}$ whose range is an $\le^\mathsf{N}$-initial segment of $\delta_\mathsf{N}$.
\end{corollary}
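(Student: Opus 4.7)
The plan is to derive the corollary as a direct application of Lemma \ref{ultimate_dedekind}, applied inside each model of $T$ with the cut $I$ taken to be the whole model.

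First I would observe that every $\MM \vDash T$ satisfies $\IS{n}$, and hence in particular $\mathrm{I}\Delta_0 + \exp$, so the background hypotheses of the lemma are met. Taking $I = \MM$ (the whole universe, which is trivially a cut in itself), condition $(\dagger)$ becomes exactly the $\Sigma_n$ induction scheme in $\MM$, which is available since $T \supseteq \IS{n}$. Since $\mathsf{N}$ is assumed to be a $\Sigma_n$-restricted interpretation of $\PA^-$ in $T$, we have both the complexity bound needed by the lemma and the fact that $\MM^{\mathsf{N}} \vDash \PA^-$, so the remaining hypothesis is also satisfied.

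Next, I would apply the lemma to obtain an $\MM$-definable relation $\iota(x,y)$ which embeds $\MM$ into $\MM^{\mathsf{N}}$, with image an $\le^{\mathsf{N}}$-initial segment of $\delta_{\mathsf{N}}$, using only the parameters of $\mathsf{N}$. The key point is that the formula $\iota(x,y)$ displayed in the proof of the lemma is explicit and depends only on the formulas defining $\mathsf{N}$, not on the particular $\MM$. Thus a single formula $\iota$ witnesses the conclusion uniformly in every model of $T$, and by completeness $T$ itself proves that this $\iota$ is an embedding of $\mathsf{id}_T$ into $\mathsf{N}$ whose range is an $\le^{\mathsf{N}}$-initial segment of $\delta_{\mathsf{N}}$.

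I do not anticipate a genuine obstacle: the lemma was stated in a form designed precisely to yield this corollary, so the real work is already done. The only step requiring any care is observing that the semantic argument translates into $T$-provability, which is automatic because both the witnessing formula $\iota$ and the three verification steps \eqref{claim:embedding-1}--\eqref{claim:embedding-3} from the lemma's proof are uniform in $\MM$ and require no assumption on $\MM$ beyond what is already provided by the axioms of $T$.
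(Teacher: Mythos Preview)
Your proposal is correct and follows exactly the same approach as the paper: apply the proof of Lemma \ref{ultimate_dedekind} in an arbitrary model $\MM \vDash T$ with $I = \MM$, so that $(\dagger)$ becomes $\Sigma_n$-induction. Your write-up is in fact more detailed than the paper's one-line proof, spelling out the uniformity/completeness step that makes the conclusion hold provably in $T$.
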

\begin{proof}
Fix any model $\MM\vDash T$ and apply the proof of Lemma \ref{ultimate_dedekind} to $I = M$.
\end{proof}

The corollary below is a syntactical incarnation of the well-known fact that the truth of $\Pi_1$ formulas is preserved in initial substructures.

\begin{corollary}\label{cor_downward_prec_pi1}
Let $n \ge 1$.
Suppose that $T \supseteq \IS{n}$ and that $\mathsf{N}$ is a $\Sigma_n$-restricted interpretation of $\PA^-$ in $T$. Then for every $\Pi_1$ sentence $\varphi$
\[T\vdash \varphi^{\mathsf{N}}\rightarrow \varphi.\]
\end{corollary}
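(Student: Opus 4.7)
The natural strategy is to use the embedding supplied by Corollary \ref{lem_n_minimality} and show that it preserves the truth of bounded formulas in both directions, applying this to the matrix of the given $\Pi_1$ sentence. Specifically, fix a $\Pi_1$ sentence $\varphi$, say $\forall \bar{x}\,\psi(\bar{x})$ with $\psi$ a $\Delta_0$ formula. Corollary \ref{lem_n_minimality} gives (provably in $T$) an embedding $\iota$ from $\mathsf{id}_T$ into $\mathsf{N}$ whose range is an $\le^{\mathsf{N}}$-initial segment of $\delta_{\mathsf{N}}$. I then plan to establish, by external induction on the build-up of $\Delta_0$ formulas, that $T$ proves
\[
\forall \bar{a}\,\bigl(\chi(\bar{a}) \leftrightarrow \chi^{\mathsf{N}}(\iota(\bar{a}))\bigr)
\]
for every $\Delta_0$ formula $\chi(\bar{x})$, and then specialize to $\chi = \psi$.

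To justify this preservation statement, I would first prove a companion fact about terms, namely that $T\vdash \iota(t(\bar{a})) =^{\mathsf{N}} t^{\mathsf{N}}(\iota(\bar{a}))$ for every $\LPA$-term $t(\bar{x})$; this is immediate by induction on $t$, since $\iota$ preserves the constants $0,1$ and the operations $+,\times$ (because it is an $\LPA$-embedding into $\NN$). The base case of the main induction, for atomic formulas, then follows from $\iota$ preserving $=$ and $<$. Boolean connectives are trivial. The interesting step is bounded quantification: for $\chi(\bar{x})$ of the form $\forall y\!\le\!t(\bar{x})\,\theta(\bar{x},y)$, the $(\Rightarrow)$ direction uses only that $\iota$ is an embedding, while the $(\Leftarrow)$ direction crucially uses initiality of $\iota[\cdot]$ in $\delta_{\mathsf{N}}$: any $y' \le^{\mathsf{N}} \iota(t(\bar{a}))$ equals $\iota(y)$ for some $y$ in the ground, and such $y$ automatically satisfies $y \le t(\bar{a})$ because $\iota$ preserves $<$ and the range is closed downwards.

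With the preservation lemma in hand, the desired implication is immediate: assuming $\varphi^{\mathsf{N}}$, that is $\forall \bar{x}\!\in\!\delta_{\mathsf{N}}\,\psi^{\mathsf{N}}(\bar{x})$, one has in particular $\psi^{\mathsf{N}}(\iota(\bar{a}))$ for every tuple $\bar{a}$ in the ground model; by the preservation fact applied to $\psi$, this yields $\psi(\bar{a})$ for every $\bar{a}$, hence $\varphi$. I do not anticipate any serious obstacle: the only point that is not entirely formal is the bounded-quantifier case of the induction, but this is precisely what the ``initial segment'' clause in Corollary \ref{lem_n_minimality} is designed to handle, so the whole argument is routine.
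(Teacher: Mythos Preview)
Your proposal is correct and follows exactly the approach the paper has in mind: the paper merely remarks that this is ``a syntactical incarnation of the well-known fact that the truth of $\Pi_1$ formulas is preserved in initial substructures,'' and you have spelled out precisely that argument using the initial-segment embedding from Corollary~\ref{lem_n_minimality}.
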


\subsection{Flexible formulas}

Flexible formulas, or formulas whose truth values that are ``as undetermined as possible'', were introduced in their basic form by Mostowski \cite{mostowski:generalization} and 
Kripke \cite{kripke:flexible}.
The theory of flexible formulas was then developed by a number of authors, in recent years \emph{inter alios} by Woodin, Hamkins, Blanck and Enayat.
Flexible formulas of various kinds play an important technical role in our arguments. The version that we need is almost the same as in \cite[Corollary 4.28]{rsl::blanck},
with the exception that we find it convenient to isolate one more parameter. We describe the construction in some detail in order to keep track where it can be formalized.

\begin{definition}
Let $n,k\in \omega$ and $T\subseteq \LPA$ be a theory. 
We say that $\xi$ is \emph{$(\Pi_n,\Sigma_k)$-flexible over $T$} if for every formula $\varphi(x)$ in $\Sigma_k$, \[T+\forall x \, (\xi(x)\leftrightarrow \phi(x)) + \Pi_n\mhyphen{\Th{\mathbb{N}}}\] is consistent. We say that $\xi$ is \emph{$\Sigma_k$-flexible over $T$} if it is  
$(\Pi_0, \Sigma_k)$-flexible over $T$.
\end{definition}

The notion of ``$\Sigma_k$-flexible'' defined above
corresponds to the standard concept of flexibility as used e.g.~in \cite{rsl::blanck}). Our proof of the existence of $(\Pi_n,\Sigma_k)$-flexible formulas is based on \cite[Chapter 2.3, Theorem 11]{lindstrombook}.

\begin{theorem}\label{tw_flex_main}
Let $T$ be a computable $\LPA$-theory extending $\mathrm{I}\Delta_0+\exp$. For all $n \ge 0$ and $k \ge 1$, there is a $\Sigma_{\max(n+1,k)}$ formula $\xi$ such that $\IS{n}+\exp$ proves the implication:
\begin{equation*}
\Pi_n\mhyphen\mathrm{Con}(T)\quad\rightarrow\quad\forall\varphi\!\in\!\mathrm{Form}_{\Sigma_k} \left(\Pi_n\mhyphen\mathrm{Con}(T+\ulcorner\forall v\,(\xi(v)\leftrightarrow\varphi(v))\urcorner)\right)\text{.}
\end{equation*}
Moreover, there is a computable function that outputs the formula $\xi$
given $n$, $k$, and an index of $T$.
\end{theorem}
\begin{proof}
Let $\Gamma\left(\theta,\langle\varphi,\sigma,p\rangle\right)$ be
\begin{equation*}
\form_{\Sigma_k}(\varphi)\ \land\
\mathrm{Tr}_{\Pi_n}(\sigma)\ \land\ \mathrm{Proof}_{T\cup\{\sigma\}}\big(\,p,\; \ulcorner\lnot\forall v\,(\theta(v)\leftrightarrow\varphi(v))\urcorner\,\big)\text{.}
\end{equation*}
This formula is $\Delta_0(\exp)$ if $n=0$, and $\Pi_n$ if $n>0$. Hence, $\lnot\Gamma$ is $\Delta_0(\exp)$ if $n=0$, and otherwise it is logically equivalent to a formula $\exists z\,\delta(\theta,\langle\varphi,\sigma,p\rangle,z)$, where $\delta$ is $\Pi_{n-1}$.

\noindent If $n=0$, we let
\begin{align*}
&\Gamma'\left(\theta,\langle\varphi,\sigma,p\rangle\right):=\quad\Gamma\left(\theta,\langle\varphi,\sigma,p\rangle\right)\ \land\ \forall\langle\tau,\psi,q\rangle\!<\!\langle\varphi,\sigma,p\rangle\ \, \lnot\Gamma(\theta,\langle\psi,\tau,q\rangle)\text{.}\\
\intertext{For $n>0$, we set}
&\Gamma'\left(\theta,\langle\varphi,\sigma,p\rangle\right):=\quad\Gamma\left(\theta,\langle\varphi,\sigma,p\rangle\right)\ \land\ \exists w\,\forall\langle\tau,\psi,q\rangle\!<\!\langle\varphi,\sigma,p\rangle\ \,\delta\!\left(\theta,\langle\psi,\tau,q\rangle,(w)_{\langle\psi,\tau,q\rangle}\right)\text{.}
\end{align*}
In each case, $\Gamma'$ is $\Sigma_{n+1}$. Let $\xi$ be a $\Sigma_{\max(n+1,k)}$ formula such that $\mathrm{I}\Delta_0+\exp$ proves the universal closure of the biconditional
\begin{equation*}
\xi(x)\ \leftrightarrow\ \exists\langle\varphi,\sigma,p\rangle\ \big(\,\Gamma'\left(\ulcorner\xi\urcorner,\langle\varphi,\sigma,p\rangle\right)\ \land\ \Sat{\Sigma_k}(\varphi,x)\,\big)\text{.}
\end{equation*}
Such a formula exists by the standard proof of the diagonal lemma. Note that the function returning a code of $\xi$ given $n$, $k$, and an index of $T$ can be chosen to be computable.

From now on, we work in $\mathrm{I}\Sigma_n+\exp$. We show the following implication:
\begin{equation*}
\Big(\exists\varphi\!\in\!\mathrm{Form}_{\Sigma_k}\, \lnot\Pi_n\mhyphen\mathrm{Con}(T+\ulcorner\forall v\,(\xi(v)\leftrightarrow\varphi(v))\urcorner)\Big)\quad\rightarrow\quad\lnot\Pi_n\mhyphen\mathrm{Con}(T)\text{.}
\end{equation*}
Assuming the antecedent holds, let $\langle\varphi,\sigma,p\rangle$ be the least triple satisfying
\begin{equation*}
\form_{\Sigma_k}(\varphi)\ \land\
\mathrm{Tr}_{\Pi_n}(\sigma)\ \land\ \mathrm{Proof}_{T\cup\{\sigma\}}\big(\,p,\; \ulcorner\lnot\forall v\,(\xi(v)\leftrightarrow\varphi(v))\urcorner\,\big)\text{.}
\end{equation*}
The existence of such a triple is guaranteed by $\Sigma_n$-induction. By the (formalized) $\Sigma_{n+1}$-completeness of the set of all true $\Pi_n$ sentences, it follows that $\mathrm{I}\Delta_0+\exp$ augmented with all true $\Pi_n$ sentences proves that $\langle\varphi,\sigma,p\rangle$ is the unique triple satisfying $\Gamma'(\ulcorner\xi\urcorner,\langle\varphi,\sigma,p\rangle)$. Consequently, by the definition of $\xi$ (and the $\Sigma_1$-completeness of $\mathrm{I}\Sigma_n+\exp$), we have that this augmented theory proves
\begin{equation*}
\forall x\,(\xi(x)\,\leftrightarrow\,\mathrm{Sat}_{\Sigma_k}(\varphi,x))\text{.}
\end{equation*}
Since, by assumption, $T$ extended by all true $\Pi_n$ sentences proves $\lnot\forall v\,(\xi(v)\leftrightarrow\varphi(v))$, and $T$ contains $\mathrm{I}\Delta_0+\exp$, we obtain $\lnot\Pi_n\mhyphen\mathrm{Con}(T)$.
\end{proof}

In the case $n = 0$, Theorem \ref{tw_flex_main} gives us
a provably $\Sigma_1$-flexible formula over $T$ which is itself a $\Sigma_1$ formula. In the corollary below, we note a well-known property of such formulas.

\begin{corollary}\label{cor:flexible-con}
Suppose $T$ is an r.e. theory which extends $\EA$ and assume that $\xi(x)$ is a $\Sigma_1$ formula which is $(\EA)$-provably $\Sigma_1$-flexible over $T$ in the sense that $\EA$ proves
\[\Con{T}  \quad\rightarrow\quad\forall\varphi\!\in\!\mathrm{Form}_{\Sigma_1} \left(\mathrm{Con}(T+\ulcorner\forall v\,(\xi(v)\leftrightarrow\varphi(v))\urcorner)\right)\text{.} \] 
Then $\EA\vdash \Con{T}\rightarrow \forall x \, \neg\xi(x).$
\end{corollary}
\begin{proof}
Fix $T$ and $\xi(x)$ as in the assumptions. Working in $\EA$, assume that $\exists x \, \xi(x)$. Then, by provable $\Sigma_1$-completeness, $\Prov_{T}(\ulcorner\exists x \,  \xi(x)\urcorner)$. However, that implies 
\[\neg \Con{T + \forall x \, (\xi(x)\leftrightarrow x\neq x)},\]
and $x\neq x$ is a $\Sigma_1$ formula. 
By 
Theorem \ref{tw_flex_main}, we obtain $\neg\Con{T}$.
\end{proof}

\subsection{Theories of iterated Tarskian truth}\label{sect::truth_theories}

In the proofs of our main results, we will need to have
access to a pair of theories which are themselves solid but additionally do not interpret models of each other in a ``nice'' way: specifically, no model of one of the theories should be a retract of a model of the other.
It turns out that one way of securing such a property
is to use Tarski's undefinability of truth theorem.
Consequently, one example of not just a pair, but a whole
family of such theories is supplied by the following canonical theories of truth over arithmetic with varying numbers of hierarchically nested truth predicates.  

\begin{definition}\label{defn:ct-n}
    For $n\in\omega$, the theory $\CT^n[\PA]$ is
    formulated in the language $\mathcal{L}_n$ which
    extends $\mathcal{L}_{\PA}$ with fresh predicates $P_1,\ldots, P_n$ (we assume that $\mathcal{L}_0=\LPA$). The theories are defined inductively: $\CT^0[\PA] = \PA$,
    and $\CT^{n+1}[\PA]$ extends $\CT^n[\PA]$
    by the induction scheme for all $\mathcal{L}_{n+1}$-formulas and the following axioms:
    \begin{enumerate}[(i)]
        \item $\forall t\!\in\!\mathrm{Term}\, \big(P_{n+1}(\subst(\qcr{P_{i}(x)},t)) \leftrightarrow P_i(\mathrm{val}(t))\big)$, for each $i = 1, \ldots, n$. 
        \item $\forall s,t \!\in\!\mathrm{Term}\, \big(P_{n+1}(\qcr{s=t}) \leftrightarrow \mathrm{val}(s)=\mathrm{val}(t)\big).$
        \item $\forall \varphi \!\in \! \mathrm{Sent}_{\mathcal{L}_n}\, \big(P_{n+1}(\qcr{\neg\varphi})\leftrightarrow \neg P_{n+1}(\varphi)\big).$
        \item $\forall \varphi,\psi\!\in \! \mathrm{Sent}_{\mathcal{L}_n}\, \big(P_{n+1}(\qcr{\varphi\wedge \psi})\leftrightarrow (P_{n+1}(\varphi)\wedge P_{n+1}(\psi))\bigr).$
        \item $\forall \varphi \!\in \! \mathrm{Form}^{\leq 1}_{\mathcal{L}_n} \,
        \forall v \!\in \! \mathrm{Var} \,
        \big(P_{n+1}(\qcr{\forall v\, \varphi})\leftrightarrow \forall y\, P_{n+1}(\subst(\varphi,\name(y)))\bigr).$
        \item $\forall x \, \bigl(P_{n+1}(x)\rightarrow \mathrm{Sent}_{\mathcal{L}_n}(x)\bigr)$. 
    \end{enumerate}
\end{definition}
\noindent One usually writes $\CT[\PA]$ instead of $\CT^1[\PA]$, and we will occasionally write $P$ instead of $P_1$. The theories $\CT^n[\PA]$ are sometimes also called $\mathrm{RT}^{<n+1}$ (for example in \cite{halbach_book}).

\begin{remark}
By induction on formula complexity inside $\CT^n[\PA]$, we can show that for all $1\leq i\leq j \leq n$, $P_j$ agrees with $P_i$ on $\mathcal{L}_{i-1}$, provably in $\CT^n[\PA]$. More precisely, for every $1\leq i \leq j\leq n$ the following is provable in $\CT^n[\PA]$:
\[\forall \varphi\,\big(\mathrm{Sent}_{\mathcal{L}_{i-1}}(\varphi)\rightarrow (P_{j}(\varphi)\leftrightarrow P_i (\varphi)\big).\]
\end{remark}

The lemma below can be seen as a generalization
of the result 
that all the theories $\CT^n[\PA]$ are solid \cite{el:categoricity-like}.
The proof of the lemma 
combines a few simple but important
observations concerning models of the full induction scheme and theories of iterated Tarskian truth. 
The statement and proof of the lemma are illustrated in Figure \ref{fig:indfact}.

\begin{lemma}\label{lem:indfact}
Fix $m\in\omega$. Let $\MM_1$, $\MM_2$, $\MM_3$ be models in a language extending $\mathcal{L}_{\PA}$, possibly strictly, such that 
\begin{enumerate}[(i)]
    \item $\MM_i\vDash\PAm$, for $i=1,2,3$, 
    \item $\mathsf{M}_2:\MM_1\rhd\MM_{2}$ and $\mathsf{M}_3: \MM_2\rhd \MM_3$.
\end{enumerate} 
Suppose further that for each $i=1,2,3$ we are given an interpretation $\mathsf{N}_i$ of $\CT^m[\PA]$ in $\MM_i$ such that
\begin{enumerate}[(a)]
    \item the domain of $\mathsf{N}_i$ is the shortest definable cut in $\MM_i$, and 
    \item  there exists an $\MM_1$-definable isomorphism from $\MM_1^{\mathsf{N}_1}$ onto $\MM_3^{\mathsf{N}_3}$.
\end{enumerate}
Then, there are $\MM_i$-definable isomorphisms $h_i:\MM_i^{\mathsf{N}_{i}}\rightarrow \MM_{i+1}^{\mathsf{N}_{i+1}}$.
\end{lemma}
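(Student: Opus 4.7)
I will focus on producing the $\MM_1$-definable isomorphism between $\MM_1^{\mathsf{N}_1}$ and the image of $\MM_2^{\mathsf{N}_2}$ in $\MM_1$; the isomorphism at index $i=2$ will then follow by repeating the argument inside $\MM_2$, using the isomorphism just obtained (transported via $\mathsf{M}_2$) to play the role of the outer given isomorphism. Write $\KK_1 = \MM_1^{\mathsf{N}_1}$ and, viewed inside $\MM_1$, $\KK_2' = \MM_1^{\mathsf{N}_2 \mathsf{M}_2}$ and $\KK_3' = \MM_1^{\mathsf{N}_3 \mathsf{M}_3 \mathsf{M}_2}$. These are three $\MM_1$-definable models of $\CT^m[\PA]$, related by the given $\MM_1$-definable $\mathcal{L}_m$-isomorphism $\varphi\colon\KK_1\to\KK_3'$.

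The strategy is to build $\MM_1$-definable initial-segment $\mathcal{L}_m$-embeddings $e_1\colon \KK_1\hookrightarrow \KK_2'$ and $e_2\colon \KK_2'\hookrightarrow \KK_3'$, and then show via $\varphi$ that both must be surjective. For the arithmetic reducts, I appeal to the formalized categoricity argument in its $\PAm$ variant (Remark \ref{ultimate_dedekind_pam}): the shortest $\MM_1$-definable cut $I_1$ embeds into any $\PAm$-structure interpreted in $\MM_1$ as an initial segment via successor iteration, and since by hypothesis $\mathsf{N}_1$'s domain is built on $I_1$, this successor-iteration map passes to an embedding with domain $\KK_1$ itself (an analogous construction yields $e_2$). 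I then extend to $\mathcal{L}_m$-embeddings by induction on the truth-hierarchy level $j$: once the embedding already respects the arithmetic reducts and $P_1, \ldots, P_{j-1}$ on the image, the Tarskian clauses for $P_j$ recursively determine its value on the image in terms of lower-level data, so the embedding preserves $P_j$ as well.

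With $e_1$ and $e_2$ in hand, I close the argument using $\varphi$. Both $e_2\circ e_1$ and $\varphi$ are $\MM_1$-definable $\mathcal{L}_m$-embeddings from $\KK_1$ to $\KK_3'$ that send $0$ to $0$ and commute with successor; the uniqueness portion of the proof of Lemma \ref{ultimate_dedekind} shows that any two such maps must coincide up to $\MM_1$-definable isomorphism. Since $\varphi$ is surjective, so is $e_2 \circ e_1$, which together with injectivity forces both $e_1$ and $e_2$ to be bijective, hence the required $\mathcal{L}_m$-isomorphisms. The main obstacle I anticipate is the truth-predicate extension: while the inductive Tarskian clause argument controls standard-complexity formula codes cleanly, handling formula codes that are nonstandard from $\MM_1$'s perspective requires careful use of the shortest-definable-cut hypothesis on each $\mathsf{N}_i$. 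That hypothesis should provide the rigidity needed for the truth predicates of the three structures to align under the canonical successor embeddings --- without it, the interpretations could in principle assign incompatible truth values on nonstandard formula codes and the arithmetic embeddings would fail to promote to $\mathcal{L}_m$-morphisms.
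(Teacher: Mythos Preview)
Your overall approach matches the paper's: build successor-iteration initial-segment embeddings of the $\LPA$-reducts, use the given isomorphism $\varphi$ together with the shortest-cut hypothesis to force surjectivity, and argue that the Tarskian truth predicates must align. The ingredients are correct and the role of the shortest-cut hypothesis is properly identified.

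There is, however, a genuine ordering problem. You propose to first promote $e_1,e_2$ to $\mathcal{L}_m$-embeddings and only afterwards establish surjectivity. This does not work as written: the Tarskian clause for the universal quantifier reads $P_j(\qcr{\forall v\,\psi})\leftrightarrow\forall y\,P_j(\subst(\psi,\name(y)))$, and the $\forall y$ ranges over the whole model. If $e_1[\KK_1]$ is a proper initial segment of $\KK_2'$, a universal sentence can hold throughout $e_1[\KK_1]$ yet be falsified above it, so $e_1^{-1}[P_j^{\KK_2'}]$ need not satisfy the quantifier clause relative to $\KK_1$, and your depth-induction stalls at that step. The paper reverses the order: it first establishes surjectivity on the $\LPA$-reducts alone --- your own argument that $e_2\circ e_1$ must agree with $\varphi$ by the shortest-cut property of $\mathsf{N}_1$ in $\MM_1$ already does this and needs nothing about truth predicates --- and only with each $e_i$ known to be bijective does it run the Tarskian induction. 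At that point $e_i^{-1}[P^{i+1}_m]$ genuinely satisfies all the $\CT$-clauses on $\mathsf{N}_i$, and the set of logical depths on which it coincides with $P^i_m$ is an $\MM_i$-definable cut inside $\mathsf{N}_i$, hence everything. Note that for $i=2$ this last step uses the shortest-cut property of $\mathsf{N}_2$ in $\MM_2$; once surjectivity of $e_2$ is already secured from the first step, no transported outer isomorphism is needed there.
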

\begin{remark}
It will be clear from the proof of Lemma \ref{lem:indfact} that assumption (b) could be weakened, as in fact we only
need an $\MM_1$-definable isomorphism between the $\LPA$-reducts of $\MM_1^{\mathsf{N}_1}$ and $\MM_3^{\mathsf{N}_3}$.
\end{remark}
\begin{figure}[htbp]
\centering
\begingroup
\definecolor{red}{rgb}{1,0,0}
\definecolor{blue}{rgb}{0,0,1}
\definecolor{green}{rgb}{0,1,0}
\definecolor{yellow}{rgb}{1,1,0}
\definecolor{orange}{rgb}{1,0.5,0}
\definecolor{purple}{rgb}{0.75,0,0.25}
\definecolor{gray}{rgb}{0.5,0.5,0.5}
\definecolor{brown}{rgb}{0.75,0.5,0.25}
\definecolor{pink}{rgb}{1,0.75,0.75}
\definecolor{violet}{rgb}{0.5,0,0.5}
\definecolor{darkgray}{rgb}{0.25,0.25,0.25}
\definecolor{lightgray}{rgb}{0.75,0.75,0.75}
\definecolor{lime}{rgb}{0.75,1,0}
\definecolor{teal}{rgb}{0,0.5,0.5}
\definecolor{cyan}{rgb}{0,1,1}
\definecolor{magenta}{rgb}{1,0,1}
\definecolor{olive}{rgb}{0.5,0.5,0}
\definecolor{gold}{rgb}{1,0.843,0}
\definecolor{navy}{rgb}{0,0,0.502}
\definecolor{seagreen}{rgb}{0.18,0.545,0.341}
\definecolor{turquoise}{rgb}{0.251,0.878,0.816}
\definecolor{darkblue}{rgb}{0,0,0.545}
\definecolor{darkcyan}{rgb}{0,0.545,0.545}
\definecolor{darkgreen}{rgb}{0,0.392,0}
\definecolor{darkmagenta}{rgb}{0.545,0,0.545}
\definecolor{darkorange}{rgb}{1,0.549,0}
\definecolor{darkred}{rgb}{0.545,0,0}
\definecolor{lightblue}{rgb}{0.678,0.847,0.902}
\definecolor{lightcyan}{rgb}{0.878,1,1}
\definecolor{lightgreen}{rgb}{0.565,0.933,0.565}
\definecolor{lightyellow}{rgb}{1,1,0.878}
\definecolor{black}{rgb}{0,0,0}
\definecolor{white}{rgb}{1,1,1}
\begin{tikzpicture}[ipe import]
  \node[ipe node]
     at (184, 708) {$\mathcal{M}_1$};
  \draw
    (192, 704)
     -- (192, 608);
  \draw
    (188, 700)
     arc[start angle=180, end angle=360, x radius=4, y radius=-4];
  \draw[rgb color={draw=0.271 0.459 0.706}, semithick]
    (188, 680)
     arc[start angle=180, end angle=360, x radius=4, y radius=-4];
  \node[ipe node]
     at (248, 708) {$\mathcal{M}_2$};
  \draw
    (256, 704)
     -- (256, 608);
  \draw
    (252, 700)
     arc[start angle=180, end angle=360, x radius=4, y radius=-4];
  \draw[rgb color={draw=0.957 0.427 0.263}, semithick]
    (252, 680)
     arc[start angle=180, end angle=360, x radius=4, y radius=-4];
  \node[ipe node]
     at (312, 708) {$\mathcal{M}_3$};
  \draw
    (320, 704)
     -- (320, 608);
  \draw
    (316, 700)
     arc[start angle=180, end angle=360, x radius=4, y radius=-4];
  \draw[rgb color={draw=0.671 0.851 0.914}, semithick]
    (316, 680)
     arc[start angle=180, end angle=360, x radius=4, y radius=-4];
  \draw[rgb color={draw=0.271 0.459 0.706}, semithick]
    (192, 684)
     -- (192, 608);
  \draw[rgb color={draw=0.957 0.427 0.263}, semithick]
    (256, 684)
     -- (256, 608);
  \draw[rgb color={draw=0.671 0.851 0.914}, semithick]
    (320, 684)
     -- (320, 608);
  \draw[rgb color={draw=0.271 0.459 0.706}, semithick]
    (256, 668)
     -- (256, 608);
  \draw[rgb color={draw=0.271 0.459 0.706}, semithick]
    (252, 664)
     arc[start angle=180, end angle=360, x radius=4, y radius=-4];
  \draw[rgb color={draw=0.957 0.427 0.263}, semithick]
    (316, 664)
     arc[start angle=180, end angle=360, x radius=4, y radius=-4];
  \draw[rgb color={draw=0.957 0.427 0.263}, semithick]
    (320, 668)
     -- (320, 608);
  \draw[rgb color={draw=0.271 0.459 0.706}, semithick]
    (316, 648)
     arc[start angle=180, end angle=360, x radius=4, y radius=-4];
  \draw[rgb color={draw=0.271 0.459 0.706}, semithick]
    (320, 652)
     -- (320, 608);
  \draw[rgb color={draw=0.192 0.212 0.584}, semithick]
    (188, 648)
     arc[start angle=180, end angle=360, x radius=4, y radius=-4];
  \draw[rgb color={draw=0.192 0.212 0.584}, semithick]
    (192, 652)
     -- (192, 608);
  \draw[dashed, ->]
    (200, 680)
     -- (248, 664);
  \draw[dashed, ->]
    (264, 680)
     -- (312, 664);
  \draw[<-]
    (316, 684)
     .. controls (276, 748) and (236, 748) .. (196, 684);
  \node[ipe node]
     at (172, 680) {$\mathsf{N}_1$};
  \node[ipe node]
     at (328, 680) {$\mathsf{N}_3$};
  \node[ipe node]
     at (236, 680) {$\mathsf{N}_2$};
  \node[ipe node]
     at (253.782, 735.931) {$j$};
  \node[ipe node]
     at (217.191, 677.124) {$h_1$};
  \node[ipe node]
     at (285.191, 677.124) {$h_2$};
  \draw[dotted, ->]
    (196, 676)
     .. controls (214.6667, 668) and (214.6667, 660) .. (196, 652);
  \node[ipe node, font=\tiny]
     at (204, 648) {$j^{\mbox{-}1}{\circ}h_2{\circ}h_1$};
  \draw
    (188, 608)
     -- (196, 608);
  \draw
    (252, 608)
     -- (260, 608);
  \draw
    (316, 608)
     -- (324, 608);
\end{tikzpicture}
\caption{Lemma \ref{lem:indfact} and its proof. The map $j$ is the isomorphism between (the $\LPA$-reducts of) $\mathsf{N}_1$ and $\mathsf{N}_3$ provided by the assumption.
For $i =1, 2$, the map $h_i$ is the canonical embedding of
$\mathsf{N}_i{\upharpoonright}_{\LPA}$
into $\mathsf{N}_{i+1}{\upharpoonright}_{\LPA}$.}
\label{fig:indfact}
\endgroup
\end{figure}
\begin{proof}
 Since we are going to apply $\mathsf{N}_i$ and $\mathsf{M}_{i+1}$ only in $\MM_i$, we shall abbreviate $\MM_i^{\mathsf{N}_i}, \MM_i^{\mathsf{M}_{i+1}}$ as $\mathsf{N}_i$, $\mathsf{M}_{i+1}$, respectively.  
 Observe that since the domain of $\mathsf{N}_i$ is the shortest cut in $\MM_i$, we know that $\mathsf{N}_i$ satisfies induction with respect to all $\MM_i$-definable properties. We shall refer to this feature as
 the $\MM_i$-inductiveness of $\mathsf{N}_i$. 
 Thanks to $\MM_i$-inductiveness, we can repeat
 the argument from Section \ref{subsec:formalized-categoricity}
 so as to conclude that for each $i\leq 2$ there is an $\MM_i$-definable embedding $h_i$ of the reduct $\MM_i^{\mathsf{N}_i}{\upharpoonright}_{\mathcal{L}_{\PA}}$ onto an initial segment of the reduct $\MM_{i+1}^{\mathsf{N}_{i+1}}{\upharpoonright}_{\mathcal{L}_{\PA}}$. 
 
 The reasoning thus far was independent of $m$.
 To prove the lemma,  we use induction on $m$. 
 Assume first that $m = 0$. Let $j$ be the
 $\MM_1$-definable isomorphism from $\mathsf{N}_1$ onto
 $\mathsf{N}_{3}$ (or, more precisely from the point of view of $\MM_1$, onto
 $\mathsf{M}_2\mathsf{M}_3\mathsf{N}_{3}$). 
 Since $m = 0$, in order to prove that 
 $\mathsf{N}_i$ and $\mathsf{N}_{i+1}$ 
 are $\MM_i$-definably isomorphic we only need to show that
 both the embeddings $h_i$ are onto $\mathsf{N}_{i+1}$
 for their respective $i$.
 Suppose otherwise: then $(h_2\circ h_1)(\mathsf{N}_1)$ is an $\MM_1$-definable proper initial segment of $\mathsf{M}_2\mathsf{M}_3\mathsf{N}_3$, and thus $(j^{-1}\circ h_2\circ h_1)(\mathsf{N}_1)$ is an $\MM_1$-definable proper initial segment of $\mathsf{N}_1$, contradicting the $\MM_1$-inductiveness of $\mathsf{N}_1$.

 Now fix $m>0$ and assume that the lemma holds
 for $m\!-\!1$.
 The inductive assumption tells us that for $i = 1,2$, the map $h_i$ is an isomorphism between $\mathsf{N}_i{\upharpoonright_{\mathcal{L}_{m-1}}}$ and 
 $\mathsf{N}_{i+1}{\upharpoonright_{\mathcal{L}_{m-1}}}$.
 Let $P^i_m$ be the $m$-th truth predicate of $\mathsf{N}_i$. We argue that $h_i[P^i_m] = P^{i+1}_m$, which will complete the proof. Consider $h_i^{-1}[P^{i+1}_m]$. This is an $\mathcal{M}_{i}$-definable subset of $\mathsf{N}_{i}$. Since $h_i$ is an isomorphism between $\mathsf{N}_i{\upharpoonright_{\mathcal{L}_{m-1}}}$ and 
 $\mathsf{N}_{i+1}{\upharpoonright_{\mathcal{L}_{m-1}}}$,
 we see that $h_{i}^{-1}[P^{i+1}_m]$ actually satisfies the axioms of $\CT^m[\PA]$,
 cf.~Definition \ref{defn:ct-n}. In $\mathsf{N}_i$,
 define $I$ be the set of logical depths of those sentences for which $P^i_m$ coincides with $h_i^{-1}[P^{i+1}_m]$. In other words, $I$ consists of those $x \in \mathsf{N}_i$ such that
 $\mathcal{M}_i$ satisfies
 \[
 \forall \varphi\!\in\!\mathsf{N}_i
 \left(
 ({\varphi \in \form_{\mathcal{L}_{m-1}}} \land {\mathsf{dpt}(\varphi)\leq x})^{\mathsf{N}_i}
 \rightarrow 
 \left(P^i_m(\varphi)\leftrightarrow \varphi\in h_i^{-1}[P^{i+1}_m]\right)\right).
 \]    
Since both $P^i_m$ and $h_{i}^{-1}[P^{i+1}_m]$ satisfy conditions (i)--(ii) of Definition \ref{defn:ct-n},
we know that $0 \in I$, and since both satisfy the inductive conditions (iii)--(v), we also know that $I$ is closed under successor. Thus, $I$ is a $\mathcal{M}_i$-definable cut contained in $\mathsf{N}_i$, and since the latter is the shortest cut in $\mathcal{M}_i$, we conclude that $P^i_m$ actually coincides with $h_i^{-1}[P^{i+1}_m]$.
\end{proof}

As a corollary, we obtain a theorem originally proved by Ali Enayat in \cite[Theorem 88 a)]{el:categoricity-like}.

\begin{corollary}\label{cor:solid_ctn}
For every $m$, the theory $\CT^m[\PA]$ is solid.
\end{corollary}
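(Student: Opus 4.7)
The plan is to derive the corollary as a direct application of Lemma \ref{lem:indfact}, using the identity interpretations as the witnesses $\mathsf{N}_i$. The point is that a model $\MM \vDash \CT^m[\PA]$ satisfies the full induction scheme, so its only definable cut is $\MM$ itself, which is exactly the domain of $\id_\MM$; this lets us bypass any Henkin-style construction of an internal model of $\CT^m[\PA]$.

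In detail, fix a model $\MM \vDash \CT^m[\PA]$ and a retraction $(\mathsf{N},\mathsf{M})$ in $\MM$ such that $\MM^{\mathsf{N}} \vDash \CT^m[\PA]$; let $\iota$ be the $\MM$-definable isomorphism from $\mathsf{id}_\MM$ onto $\mathsf{N}\mathsf{M}$ witnessing the retraction. Set
\[\MM_1 := \MM, \quad \MM_2 := \MM^{\mathsf{N}}, \quad \MM_3 := \MM^{\mathsf{N}\mathsf{M}},\]
with the interpretations $\mathsf{M}_2 := \mathsf{N}$ and $\mathsf{M}_3 := \mathsf{M}$. Because $\iota$ is an isomorphism from $\MM$ onto $\MM_3$, and $\MM_1, \MM_2 \vDash \CT^m[\PA]$ by hypothesis, all three of $\MM_1, \MM_2, \MM_3$ satisfy $\CT^m[\PA]$ and in particular $\PAm$, so clauses (i)--(ii) of Lemma \ref{lem:indfact} hold.

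Now take $\mathsf{N}_i := \id_{\MM_i}$ for $i = 1,2,3$. Each $\mathsf{N}_i$ is trivially an interpretation of $\CT^m[\PA]$ in $\MM_i$, and since $\MM_i$ satisfies full induction, the only definable cut of $\MM_i$ is $\MM_i$ itself; this is also the domain of $\id_{\MM_i}$, verifying clause (a). For clause (b), we need an $\MM_1$-definable isomorphism from $\MM_1^{\mathsf{N}_1} = \MM_1$ onto $\MM_3^{\mathsf{N}_3} = \MM_3$; this is precisely $\iota$.

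Applying Lemma \ref{lem:indfact}, we obtain an $\MM$-definable isomorphism between $\MM_1^{\mathsf{N}_1} = \MM$ and $\MM_2^{\mathsf{N}_2} = \MM^{\mathsf{N}}$. Since $\mathsf{N}_1 = \id_\MM$, this is exactly the statement that $\mathsf{N}$ is $\MM$-isomorphic to $\id_\MM$, which is the definition of solidity. As there is no substantive obstacle once Lemma \ref{lem:indfact} is in hand --- all the structural work went into that lemma --- the only thing to verify carefully is the matching of the hypotheses, which we have done.
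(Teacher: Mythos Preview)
Your proposal is correct and is exactly the approach taken in the paper: the corollary is the special case of Lemma~\ref{lem:indfact} in which each $\mathsf{N}_i$ is the identity interpretation on $\MM_i$. Your verification of the hypotheses (in particular that full induction in $\CT^m[\PA]$ makes the whole model its own shortest definable cut) is spelled out in more detail than the paper's one-line proof, but the argument is the same.
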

\begin{proof}
This is a special case of Lemma \ref{lem:indfact} 
in which 
$\mathsf{N}_i$ is the identity interpretation on $\MM_i$.
\end{proof}

\section{Solidity below PA: the basic construction}\label{sec:solidity-below}\label{sec:proper-solid}

This section contains the proof of a no-frills version of our main result, which simply says that there are arbitrarily strong solid proper subtheories of $\PA$. The techniques we use are generalized and refined in the next section, yielding two natural strengthenings of our main theorem and providing background for constructions from Section \ref{sec:separation}.

\begin{theorem}\label{thm:proper-solid}
    For every $n \in \N$, there exists an r.e. solid subtheory $T_n$
    of $\PA$ that contains $\IS{n}+\exp$ but not $\BS{n+1}$.
\end{theorem}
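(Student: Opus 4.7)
The plan, foreshadowed in the introduction, is to design $T$ as a ``disjunctive'' theory so that every $\MM \vDash T$ is either a model of $\PA$ or else isomorphic to a single fixed pointwise definable model $M_0 \vDash \IS{n} + \exp + \neg\BS{n+1}$ of carefully tuned strength. Solidity will then split into cases: models satisfying $\PA$ are dispatched by the solidity of $\PA$, and models isomorphic to $M_0$ by the rigidity coming from pointwise definability (every element being uniquely definable, any retraction is forced to coincide with the identity).

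For the target, I would take $M_0 := \mathcal{K}_{n+1}(\MM_0)$, where $\MM_0$ is a nonstandard model of $\PA + \{\Con{\CT^m[\PA]} : m \in \omega\}$ (consistent, since each $\CT^m[\PA]$ is solid and in particular consistent by Corollary \ref{cor:solid_ctn}). By the construction recalled at the end of Section \ref{sec:prelim}, $M_0$ is pointwise definable, satisfies $\IS{n} + \exp + \neg\BS{n+1}$, and inherits the $\Pi_1$-consequences of $\MM_0$, so in particular $M_0 \vDash \Con{\CT^m[\PA]}$ for every standard $m$. This is the ``interpretability strength sufficiently different from (in fact: greater than) that of $\PA$'' promised in the introduction. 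Since $\Th(M_0)$ is not r.e., I would replace it by a carefully designed r.e.~subtheory $U \subseteq \Th(M_0)$, assembled with the help of the flexible formulas of Theorem \ref{tw_flex_main}: the flexible formula plays the role of a self-referential specification of the canonical enumeration of definable elements, so that any model of $U$ is pointwise definable ``in the same way'' as $M_0$. We want $U$ to (i) extend $\IS{n} + \exp + \neg\BS{n+1}$, (ii) prove $\Con{\CT^m[\PA]}$ for every~$m$, and (iii) force its models to be pointwise definable in this uniform sense.

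Now pick a single instance $\beta$ of $\BS{n+1}$ that fails in $M_0$, put $\psi := \neg\beta$, and define
\[
T \;:=\; (\IS{n} + \exp)
\;\cup\; \{\,\psi \vee \varphi : \varphi \text{ an axiom of } \PA\,\}
\;\cup\; \{\,\neg\psi \vee \chi : \chi \in U\,\}.
\]
Each axiom of $T$ is provable in $\PA$: the first block because $\IS{n} + \exp \subseteq \PA$; the second because $\PA \vdash \varphi$; and the third because $\PA \vdash \beta$, i.e.~$\PA \vdash \neg\psi$. So $T$ is an r.e.~subtheory of $\PA$ containing $\IS{n} + \exp$. Since $M_0 \vDash T + \neg\BS{n+1}$, we also have $T \not\vdash \BS{n+1}$.

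Finally, for solidity, suppose $\MM \vDash T$ and $(\mathsf{N}, \mathsf{M})$ is a retraction in $\MM$ with $\MM^{\mathsf{N}} \vDash T$. The disjunctive axioms imply that each of $\MM$ and $\MM^{\mathsf{N}}$ satisfies either $\PA$ or $U$; models of $U$ are then isomorphic to $M_0$ by the pointwise-definability clause of~$U$. In the ``pure'' cases -- both sides satisfy $\PA$, or both are isomorphic to $M_0$ -- we conclude by Enayat's solidity of $\PA$ and by pointwise-definability rigidity, respectively. The main obstacle of the proof is to rule out the two \emph{mixed} scenarios ($\MM \vDash \PA$ with $\MM^{\mathsf{N}} \cong M_0$, or vice versa). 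Here the calibrated interpretability strength of $U$ is essential: such a mixed retraction would, via the formalized categoricity argument (Lemma \ref{ultimate_dedekind}) and downward $\Pi_1$-absoluteness (Corollary \ref{cor_downward_prec_pi1}), transmit the various $\Con{\CT^m[\PA]}$ between the two structures in a way ultimately incompatible with the composition $\mathsf{NM}$ being isomorphic to the identity on $\MM$. Engineering $U$ so that this mixed case collapses is, I expect, the technically delicate heart of the argument.
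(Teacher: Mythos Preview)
Your disjunctive strategy matches the paper's, but two concrete steps fail. First, no r.e.~theory $U$ can force all its models to be isomorphic to a single nonstandard $M_0$: even axiomatizing ``every element has a $\Sigma_{n+1}$ definition'' leaves continuum-many non-isomorphic countable models (they can disagree on which $\Sigma_{n+1}$ formulas actually define elements, i.e.~on their $\Sigma_{n+1}$-theories). So the pure-$U$ case cannot be dispatched by ``rigidity of $M_0$''; you need $U$ itself to be solid as a theory, and you have not argued this. Second, your mixed-case argument gives no contradiction: a model $\MM \vDash \PA$ may perfectly well satisfy $\Con{\CT^m[\PA]}$ for every $m$ (any model of true arithmetic does), so transmitting these $\Pi_1$ facts rules out nothing. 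Moreover, Corollary~\ref{cor_downward_prec_pi1} applies only to $\Sigma_k$-restricted interpretations for a fixed $k$, which arbitrary retractions do not supply.

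The paper fills both gaps with a single idea you are not using: the auxiliary theory (called $\tn{n}$) is engineered to be \emph{bi-interpretable with $\CT[\PA]$}. The flexible $\Sigma_1$ formula is used to encode the truth predicate $P$ on the shortest definable cut, not to enumerate definable elements, and $\tn{n}$ explicitly axiomatizes the whole bi-interpretation package: an interpretation $\gn{n}$ of $\CT[\PA]$ on that cut, an interpretation $\hn{n}$ back, and definable isomorphisms $i_n, j_n$ in both directions. This buys both missing pieces at once. Solidity of $\tn{n}$ follows because the retraction $\MM_1 \rhd \MM_2 \rhd \MM_3$ pushes down to one between the interpreted $\CT[\PA]$-models on the shortest cuts, where Lemma~\ref{lem:indfact} applies, and $i_n$ lifts the resulting isomorphism back up. The mixed cases are killed by Tarski's undefinability theorem: a retraction between a model of $\PA$ and one of $\tn{n}$ would, via $\gn{n}$ and $i_n$, yield a retraction between a model of $\PA$ and a model of $\CT[\PA]$, and Lemma~\ref{lem:no-mix-ct-pa} shows this is impossible because it would make a satisfaction predicate for the $\PA$-model definable in itself.
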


To prove Theorem \ref{thm:proper-solid}, we will gradually introduce the main concepts involved in our argument 
and derive a series of lemmas about those concepts.

\paragraph{Construction of $T_n$.} Fix $n \in \N$. We want to construct a theory $T_n \supseteq \IS{n}+\exp$ that is a proper subtheory of $\PA$ but is nevertheless solid. To that end, we will define an auxiliary theory $\tn{n} \supseteq \IS{n}+\exp$ that is inconsistent with $\BS{n+1}$ but rather strong from the perspective of interpretability.
Furthermore, we will define an interpretation $\hn{n}$ of a model of ${\tn{n}}$ in the standard model $(\N,\Th{\N})$
of ${\CT[\PA]}$, and an interpretation $\gn{n}$ in the other direction. 
Our eventual theory $T_n$ will essentially say that we are either in a universe satisfying $\PA$ or in one satisfying
$\tn{n}$.

\begin{remark}The interpretations $\hn{n}$ and $\gn{n}$ will in fact witness the bi-interpretability of $\tn{n}$ and $\CT[\PA]$, which we will show as a separate proposition in Section \ref{subsec:formalizing-biint}.
\end{remark}

We now proceed to define our concepts more precisely,
beginning with $\hn{n}$. Let $\xi_n(x)$
be a $\Sigma_1$ formula 
that is provably $\Sigma_1$-flexible over $\IS{n+1}$ in the sense of Corollary \ref{cor:flexible-con}.
The interpretation $\hn{n}$ describes the following process, as carried out in the standard model $(\N,\Th{\N})$ of $\CT[\PA]$:

\begin{itemize}
    \item Consider a canonically defined binary tree whose
    paths correspond to complete consistent henkinized extensions of the theory
        \begin{equation}\label{eq:theory-1}
            {\IS{n+1}}  
            \cup \{\xi_{n}(\num{k})\colon\, k \in \Th{\N}\}
            \cup \{\neg \xi_{n}(\num{k})\colon\, k \in \N \setminus \Th{\N}\}.
        \end{equation}
    (One could work with $\IS{n} + \exp$ instead of $\IS{n+1}$, but the latter allows us to avoid having to write the annoying $\exp$ more than needed.)    
    \item Take the Henkin model, say $\mathcal{H}$, given by
    the leftmost path through that tree.
    \item Take $\KK_{n+1}(\mathcal{H})$, that is the submodel of $\mathcal{H}$ consisting of the $\Sigma_{n+1}$-definable elements.
\end{itemize}

Since $\xi_n(x)$ is a $\Sigma_1$-flexible formula over ${\IS{n+1}}$, the compactness theorem implies that the theory in (\ref{eq:theory-1}) is consistent. 
Thus, when applied in $(\N,\Th{\N})$, the interpretation
$\hn{n}$ indeed produces a structure $\KK : = \KK_{n+1}(\mathcal{H})$. The construction of $\KK$, that is, of $(\N,\Th{\N})^{\hn{n}}$, is schematically presented in Figure \ref{fig:structure-k}.

\begin{figure}[htbp]

\begin{tikzpicture}
\draw[very thick, ] (3,0) -- (4.65,0);

\draw[very thick, dashed] (4.65,0) -- (5.6,0);
\draw[very thick, ] (5.6,0) -- (7,0);
\draw[very thick, dashed] (7,0) -- (8.5,0);
\draw[very thick, ] (8.5,0) -- (10.8,0);

\draw[very thick, dashed] (10.8,0) -- (12.95,0);

\draw[very thick, ] (3, -0.1) -- (3, 0.1);
\draw[very thick, ] (3.3, -0.1) -- (3.3, 0.1);
\draw[very thick, ] (3.6, -0.1) -- (3.6, 0.1);

\draw (3, -0.4) node {\small{$\mathsf 0$}};
\draw (3.3, -0.4) node {\small{$\mathsf 1$}};
\draw (3.6, -0.4) node {\small{$\mathsf 2$}};
\draw (4.1, -0.4) node {\small{$\ldots$}};

\draw[very thick, ] (4.55, -0.25) .. controls (4.7,-0.15)  and (4.7,0.15) ..  (4.55, 0.25);

\draw (4.8, 0.5) node {$\mathbb{N}$};

\draw[very thick, ] (6.3, -0.1) -- (6.3, 0.1);

\draw[very thick, ] (10.65, -0.25) .. controls (10.8,-0.15)  and (10.8,0.15) ..  (10.65, 0.25);

\draw (11, 0.5) node {$\KK_{n+1}(\mathcal{H})$};

\draw[very thick] (12.85, -0.25) .. controls (13,-0.15)  and (13,0.15) ..  (12.85, 0.25);

\draw (13.2, 0.5) node {$\mathcal{H}$};

\end{tikzpicture}

\caption{Construction of the model $(\N,\Th{\N})^{\hn{n}}$ of $\tn{n}$. The solid horizontal lines represent $(\N,\Th{\N})^{\hn{n}}$, 
which is the pointwise $\Sigma_{n+1}$-definable substructure of the Henkin structure $\mathcal{H}$. The dashed horizontal lines represent the rest of $\mathcal{H}$.}

\label{fig:structure-k}

\end{figure}

\begin{lemma}\label{lem:facts-about-k}
$\KK$ is a $\Sigma_{n+1}$-elementary substructure of
$\mathcal{H}$ satisfying ${\IS{n}} + {\exp} + {\neg\BS{n+1}}$.
\end{lemma}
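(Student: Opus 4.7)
The plan is to read off all three assertions from the general material on $\KK_{n+1}$-substructures recalled at the end of Section~\ref{sec:prelim}. Since $\mathcal{H}$ is a Henkin model of the theory~\eqref{eq:theory-1}, we have $\mathcal{H}\vDash \IS{n+1}$, and because $\IS{1}$ proves the totality of exponentiation this also yields $\mathcal{H}\vDash \exp$. Applying the standard fact that $\KK_{n+1}(\MM)\preccurlyeq_{n+1}\MM$ whenever $\MM\vDash \IS{n}$ immediately gives $\KK\preccurlyeq_{n+1}\mathcal{H}$ together with $\KK\vDash \IS{n}+\exp$. By the same block of background results, the remaining clause $\KK\vDash \neg\BS{n+1}$ follows as soon as one knows that $\KK$ does not coincide with the standard model $\N$. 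So the whole lemma reduces to verifying $\KK\neq \N$.

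To handle this, I would exploit the two opposing ways that the flexible formula $\xi_n$ behaves in $\mathcal{H}$ and in $\N$. On the one hand, theory~\eqref{eq:theory-1} explicitly forces $\xi_n(\num{k})$ to hold for every $k\in\Th{\N}$, so in particular $\mathcal{H}\vDash \xi_n(\num{\qcr{0=0}})$. Hence $\mathcal{H}$ satisfies the $\Sigma_1$ sentence $\exists x\,\xi_n(x)$, and since $\Sigma_1\subseteq \Sigma_{n+1}$, the elementarity already established gives $\KK\vDash \exists x\,\xi_n(x)$ as well.

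On the other hand, Corollary~\ref{cor:flexible-con} applied to $T=\IS{n+1}$ supplies $\EA\vdash \Con{\IS{n+1}}\rightarrow \forall x\,\neg\xi_n(x)$; since $\IS{n+1}$ is sound, $\Con{\IS{n+1}}$ holds in $\N$, and therefore $\N\vDash \forall x\,\neg\xi_n(x)$. So $\KK$ and $\N$ disagree on a $\Sigma_1$ sentence, which forces $\KK\neq \N$ and completes the argument. The only genuinely creative ingredient of the proof is the observation that Corollary~\ref{cor:flexible-con} is precisely the tool that keeps $\KK$ from collapsing to the standard model; everything else is routine bookkeeping with the background facts, and I do not anticipate a real obstacle beyond making sure that the $\Sigma_{n+1}$-elementarity is invoked in the right place.
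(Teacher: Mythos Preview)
Your proposal is correct and follows essentially the same approach as the paper's own proof: both reduce the lemma to the background facts on $\KK_{n+1}$-substructures and then rule out $\KK=\N$ by observing that $\KK\vDash\exists x\,\xi_n(x)$ (inherited from $\mathcal{H}$ via $\Sigma_{n+1}$-elementarity) while Corollary~\ref{cor:flexible-con} forces $\N\vDash\forall x\,\neg\xi_n(x)$.
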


\begin{proof}
It follows directly from the well-known
properties of pointwise $\Sigma_{n+1}$-definable structures
discussed at the end of Section \ref{sec:prelim} that
$\KK \preccurlyeq_{n+1} \mathcal{H}$ and, as a consequence, that $\KK \vDash {\IS{n}} +\exp$. 
We can also conclude that 
$\KK \vDash {\neg\BS{n+1}}$ unless $\KK$ is the standard
model. 

However, since $\mathcal{H} \vDash \exists x\,\xi_n(x)$, by $\Sigma_{n+1}$-elementarity we get 
$\KK \vDash \exists x\,\xi_n(x)$, which ensures nonstandardness by Corollary \ref{cor:flexible-con}.
\end{proof}

The standard cut $\N$ is definable in $\KK$ as the set of those $x$ that satisfy the formula $\delta_n(x)$: $``$there exists an element without a $\Sigma_{n+1}$ definition smaller than $x"$. Clearly then, $\N$ is the smallest definable cut of $\KK$. Moreover, since $\xi_n(x)$ is a $\Sigma_1$ formula and $\KK \preccurlyeq_{n+1} \mathcal{H}$, for each standard $k$ it holds that $\KK \vDash \xi_n(k)$ if and only if $k$ is (the code of) an arithmetical sentence true in $\N$.

Thus, $(\N, \Th{\N})$ can be interpreted in $\KK$ by the interpretation $\gn{n}$ in which the domain is defined by $\delta_n$, the arithmetical operations are unchanged, and $P$ is defined by $\delta_n(x) \land \xi_n(x)$. 

\begin{lemma}\label{lem:in-and-jn}
There is a $\KK$-definable isomorphism $i_n$ between  $\gn{n}\!\hn{n}$ and the identity interpretation of $\KK$ in itself, and there is an $(\N, \Th{\N})$-definable isomorphism $j_n$ between $\hn{n}\!\gn{n}$ and the identity interpretation of $(\N,\Th{\N})$ in itself.
\end{lemma}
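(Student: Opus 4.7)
The plan for $j_n$ is to use the obvious map sending each $k\in\N$ to the element of $\KK$ represented by the Henkin equivalence class of the closed term $\num{k}$. Since $\hn{n}$ is an arithmetical formalization of the Henkin construction (together with the pointwise-$\Sigma_{n+1}$-definable restriction), this correspondence is $(\N,\Th{\N})$-definable. Its range is precisely the standard cut $\N^\KK$, which equals $\hn{n}\!\gn{n}((\N,\Th{\N}))$ because the domain formula $\delta_n$ of $\gn{n}$ picks out $\N^\KK$. Preservation of the arithmetical operations is immediate from the standard properties of numerals. Preservation of the predicates $P$ versus $\xi_n$ reduces to the external observation that the Henkin tree was built to complete a theory forcing $\mathcal{H}\vDash\xi_n(\num{k})$ iff $k\in\Th{\N}$, which transfers down to $\KK$ by $\Sigma_1$-elementarity, since $\KK\preccurlyeq_{n+1}\mathcal{H}$ and $\xi_n$ is a $\Sigma_1$ formula.

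For $i_n$, the plan is to use the canonical $\Sigma_{n+1}$-definitions supplied by the pointwise $\Sigma_{n+1}$-definability of $\KK$. Specifically, I would define $i_n(a,\psi)$ to hold iff $\psi$ is (the code of) a $\Sigma_{n+1}$ formula in one free variable satisfying
\[
\Sat{\Sigma_{n+1}}(\psi,a)\wedge\forall y\,\bigl(\Sat{\Sigma_{n+1}}(\psi,y)\to y=a\bigr).
\]
This is $\KK$-definable because $\KK\vDash\IS{n}+\exp$ supports $\Sat{\Sigma_{n+1}}$. The first projection of $i_n$ covers all of $\KK$ by pointwise definability; the substantive claim is that the second projection coincides with $\delta_{\gn{n}\!\hn{n}}$, i.e.\ with the codes of $\Sigma_{n+1}$-formulas that define a unique element in the Henkin model built internally by $\hn{n}$ inside $\gn{n}(\KK)$, and that $i_n$ respects equality and the arithmetical operations on both sides.

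Both verifications rest on the following external picture: $\delta_n$ defines the standard cut of $\KK$, and $\xi_n^\KK$ restricted to this cut equals $\Th{\N}$, so externally $\gn{n}(\KK)\cong(\N,\Th{\N})$. Since the Henkin construction is canonical and deterministic (leftmost-path choice, fixed enumeration of formulas and witnesses), running it internally in $\gn{n}(\KK)$ externally reproduces the very same $\mathcal{H}$, and its pointwise $\Sigma_{n+1}$-definable submodel is $\KK$ again. Consequently a $\Sigma_{n+1}$ formula with one free variable defines a unique element in the internal Henkin model iff it does so in $\KK$; two such formulas are $=^{\gn{n}\!\hn{n}}$-equivalent iff they define the same element of $\KK$; and preservation of $+,\times,<,0,1$ follows because each relevant atomic equation is $\Sigma_{n+1}$-expressible and hence transfers between $\KK$ and $\mathcal{H}$ via $\Sigma_{n+1}$-elementarity.

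The main obstacle is precisely to pin down this identification of the internally-built Henkin model with the external $\mathcal{H}$, and to make sure that $\KK$'s internal bookkeeping of ``$\Sigma_{n+1}$-formula defining a unique element'' aligns with the external one, so that the $\KK$-definable relation $i_n$ has exactly the required second projection and really preserves the interpretation structure. Once this alignment is in place, both $i_n$ and $j_n$ drop out of the canonicity of $\hn{n}$ together with the $\Sigma_{n+1}$-elementarity of $\KK$ in $\mathcal{H}$.
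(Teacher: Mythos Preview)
Your approach is essentially the paper's. For $j_n$, your ``numeral map'' $k\mapsto[\num{k}]$ is exactly the map from Lemma~\ref{ultimate_dedekind} that the paper invokes (sending $k$ to the $k$-th smallest element of $\KK$), and your verification that $P$ matches $\xi_n$ on the range is correct. For $i_n$, the paper likewise uses $\Sigma_{n+1}$-definitions to identify each $y\in\KK$ with its counterpart in the internally rebuilt copy, but sends $y$ to its \emph{least} $\Sigma_{n+1}$-definition; since every $y\in\KK$ has a standard definition (that being how $\KK=\KK_{n+1}(\mathcal{H})$ was formed), this least code is automatically standard and so lands in $\delta_n^\KK$.

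There is one genuine gap in your version of $i_n$. As written, $i_n(a,\psi)$ places no standardness requirement on $\psi$: any $\psi\in\KK$ that codes, in $\KK$'s internal sense, a $\Sigma_{n+1}$ formula uniquely satisfied by $a$ is admitted. Such nonstandard $\psi$ certainly exist---e.g.\ pad a standard definition of $a$ by a vacuous conjunct $\underline{c}=\underline{c}$ with $c$ nonstandard---so the second projection of your $i_n$ properly contains $\delta_{\gn{n}\hn{n}}\subseteq\delta_n^\KK$, and your claimed coincidence fails. Your biconditional ``$\psi$ defines a unique element in the internal Henkin model iff it does so in $\KK$'' is correct once restricted to \emph{standard} $\psi$ (the key observation being that $\exists x\,\exists x'\,(\psi(x)\wedge\psi(x')\wedge x\neq x')$ is a $\Sigma_{n+1}$ sentence, hence absolute by $\KK\preccurlyeq_{n+1}\mathcal{H}$), but it does not even parse for nonstandard $\psi$, which lie outside $\gn{n}(\KK)$ and so are invisible to the internal Henkin construction. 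The fix is immediate: conjoin $\delta_n(\psi)$ to your definition of $i_n(a,\psi)$. The paper's choice of the \emph{least} definition achieves the same effect without the explicit restriction.
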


\begin{proof}
The isomorphism $i_n$ takes an element $y$ of $\KK$, finds the least $\Sigma_{n+1}$ definition $x$ of $y$ (where $x$ is necessarily a standard number,
because $\KK$ is in fact pointwise $\Sigma_{n+1}$-definable), and maps $y$ to the element defined by $x$ in the structure obtained according to $\hn{n}$. 

The isomorphism $j_n$ is a special case of the map appearing in Lemma \ref{ultimate_dedekind}: 
it takes $x \in \N$ to the $x$-th smallest element of $\KK$. By the construction of $\KK$ and the definition of $\gn{n}$, the range of $j_n$ is exactly $\KK^{\gn{n}} = (\N, \Th{\N})^{\hn{n}\!\gn{n}}$
and the isomorphism of arithmetical structures extends to the truth predicate $P$.
\end{proof}
 
Note that $\gn{n}, \hn{n}, i_n, j_n$ are all definable without parameters in the respective structures. 
(Which is in any case obvious since both $(\N,\Th{\N})$ and $\KK$ are pointwise definable.)
 
We let $\tn{n}$ be a theory axiomatizing some salient properties of $\KK$. Namely, the axioms of $\tn{n}$ are:
\begin{enumerate}[(i)]
    \item\label{it:t0n-1} ${\IS{n}} + {\exp} + {\neg\BS{n+1}} $,
    \item\label{it:t0n-2} $``\delta_n$ defines a cut which is the shortest definable cut$"$,
    \item\label{it:t0n-3} $\gn{n}\vDash\CT[\PA]$, 
    \item\label{it:t0n-4} $``i_n \colon \id \to \gn{n}\!\hn{n}$ is an isomorphism$"$,
    \item\label{it:t0n-5} $\gn{n}\vDash ``j_n \colon \mathsf{id} \to \hn{n}\!\gn{n}$ is an isomorphism$"$.
\end{enumerate}
We note that \ref{it:t0n-2} and \ref{it:t0n-3} are infinite collections of sentences. 

Finally, we let $T_n$ be the following theory: 
\[{\mathrm{I}\Delta_0 + \exp}  \cup \{\BS{n+1}\rightarrow \IS{k}\colon\,k\in \mathbb{N}\} \cup \{\neg\BS{n+1}\rightarrow \varphi\colon\, \varphi\in \tn{n}\}.\]
In other words, $T_n$ is defined by cases: if $\BS{n+1}$ holds, then $\PA$ holds, and if $\BS{n+1}$ fails, then $\tn{n}$ holds. 

\paragraph{Verification of properties of $T_n$.}

\begin{lemma}\label{lem:tn-proper-subtheory}
 $T_n$ contains $\IS{n} + \exp$ but not $\BS{n+1}$. Thus, it is a proper subtheory of $\PA$.  
\end{lemma}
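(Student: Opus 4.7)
The plan is to check each of the three claims directly from the axiomatization of $T_n$: that $T_n \vdash \IS{n} + \exp$, that $T_n \not\vdash \BS{n+1}$, and that every axiom of $T_n$ is a theorem of $\PA$. None of these should present any real difficulty, since the theory $T_n$ was engineered precisely so that they go through; the step that requires the most care is verifying that the candidate witness model actually satisfies all five clauses of $\tn{n}$.

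For $T_n \vdash \IS{n} + \exp$, I would argue inside $T_n$ by cases on $\BS{n+1}$. If $\BS{n+1}$ holds, then by the axiom $\BS{n+1} \rightarrow \IS{n}$ of $T_n$ we get $\IS{n}$; combined with the $\exp$ axiom already in $T_n$, this yields $\IS{n} + \exp$. If $\neg \BS{n+1}$ holds, then all axioms of $\tn{n}$ follow, and clause \ref{it:t0n-1} of $\tn{n}$ contains $\IS{n} + \exp$ outright. The containment $T_n \subseteq \PA$ is immediate: $\mathrm{I}\Delta_0 + \exp \subseteq \PA$, the implications $\BS{n+1} \rightarrow \IS{k}$ are theorems of $\PA$ since $\PA$ proves each $\IS{k}$, and the implications $\neg \BS{n+1} \rightarrow \varphi$ are vacuously provable in $\PA$ because $\PA \vdash \BS{n+1}$.

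For $T_n \not\vdash \BS{n+1}$, I would exhibit a model of $T_n + \neg \BS{n+1}$, namely the structure $\KK = \KK_{n+1}(\HH)$ used in the construction of $\hn{n}$. Since $\KK$ satisfies $\neg \BS{n+1}$, the conditional axioms $\BS{n+1} \rightarrow \IS{k}$ hold vacuously, and $\mathrm{I}\Delta_0 + \exp$ holds by Lemma \ref{lem:facts-about-k}, so the only remaining task is to verify that $\KK$ satisfies every axiom of $\tn{n}$. Clause \ref{it:t0n-1} is exactly Lemma \ref{lem:facts-about-k}; clause \ref{it:t0n-2} follows from the observation (made just after Lemma \ref{lem:facts-about-k}) that $\delta_n$ defines $\N$ inside $\KK$ and that $\N$ is the smallest definable cut of the pointwise $\Sigma_{n+1}$-definable structure $\KK$; clause \ref{it:t0n-3} holds because $\gn{n}$ applied in $\KK$ produces exactly $(\N, \Th{\N})$, which is a model of $\CT[\PA]$; and clauses \ref{it:t0n-4} and \ref{it:t0n-5} are given by Lemma \ref{lem:in-and-jn}. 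Therefore $\KK \vDash T_n$ with $\KK \vDash \neg \BS{n+1}$, which together with $\PA \vdash \BS{n+1}$ shows that the inclusion $T_n \subseteq \PA$ is strict.

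Overall this is a bookkeeping exercise rather than a deep argument, and I do not anticipate any significant obstacle; the only mildly delicate point is making sure one has correctly recorded that $\KK$ genuinely realizes all the structural data (the cut defined by $\delta_n$, the interpretations $\gn{n}, \hn{n}$, the isomorphisms $i_n, j_n$) required by clauses \ref{it:t0n-2}--\ref{it:t0n-5}, but this is already ensured by the definitions of $\hn{n}, \gn{n}$ and by Lemma \ref{lem:in-and-jn}.
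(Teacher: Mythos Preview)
Your proposal is correct and follows essentially the same approach as the paper's proof, just spelled out in more detail. The paper compresses your verification that $\KK \vDash \tn{n}$ into a single reference to Lemmas \ref{lem:facts-about-k} and \ref{lem:in-and-jn} and the surrounding discussion, and leaves the containment $T_n \subseteq \PA$ and the case split for $T_n \vdash \IS{n} + \exp$ implicit in the remark that every model of $T_n$ is either a model of $\PA$ or of $\tn{n}$.
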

\begin{proof}
By the construction of the model $\KK$ described above,
and the facts summarized in Lemmas \ref{lem:facts-about-k}
and \ref{lem:in-and-jn},
$\tn{n}$ is a consistent theory. 

By definition, $\tn{n}$ contains ${\IS{n}} + {\exp} + {\neg \BS{n+1}}$, and each model of $T_n$ is either a model of 
$\PA$ or one of $\tn{n}$.
\end{proof}

To prove Theorem \ref{thm:proper-solid}, we need to show
the solidity of $T_n$. This requires analyzing a number
of cases dependent on the theories satisfied by models forming a potential counterexample to solidity. 
We prove two more lemmas, the first of which rules out 
a counterexample consisting of models of $\tn{n}$, while
the other will be helpful in ruling out counterexamples
in which models of $\tn{n}$ and of $\PA$ alternate.

\begin{lemma}\label{lem:t0n-solid}
 $\tn{n}$ is solid.  
\end{lemma}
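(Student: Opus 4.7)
The plan is to reduce the solidity of $\tn{n}$ to Lemma~\ref{lem:indfact}, exploiting the fact that the axioms of $\tn{n}$ encode a bi-interpretation with $\CT[\PA]$ in which $\gn{n}$ provides a $\CT[\PA]$-interpretation on the shortest definable cut, while $\hn{n}$ together with the isomorphisms $i_n$ and $j_n$ provides the inverse.

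I would start from a retraction $(\mathsf{N}, \mathsf{M})$ in some $\MM \vDash \tn{n}$ with $\MM^{\mathsf{N}} \vDash \tn{n}$, witnessed by an $\MM$-definable isomorphism $\iota \colon \id_\MM \to \mathsf{NM}$, and aim to produce an $\MM$-definable isomorphism $\MM \to \MM^{\mathsf{N}}$. Set $\MM_1 = \MM$, $\MM_2 = \MM^{\mathsf{N}}$, $\MM_3 = \MM^{\mathsf{NM}}$, with $\mathsf{M}_2 = \mathsf{N}$ and $\mathsf{M}_3 = \mathsf{M}$. Since $\MM_3 \cong \MM_1$ via $\iota$, all three models satisfy $\tn{n}$. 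Take $\mathsf{N}_i = \gn{n}$ interpreted in $\MM_i$; then axioms~(ii) and (iii) of $\tn{n}$ guarantee that the domain of $\mathsf{N}_i$ is the shortest definable cut of $\MM_i$ and that $\MM_i^{\mathsf{N}_i} \vDash \CT[\PA] = \CT^1[\PA]$. Hypothesis~(b) of Lemma~\ref{lem:indfact} is verified by transporting $\iota$ through the formulas defining $\gn{n}$, yielding an $\MM$-definable isomorphism $\MM^{\gn{n}} \to \MM^{\mathsf{NM}\gn{n}}$. Invoking the lemma with $m = 1$ produces an $\MM$-definable isomorphism $\alpha \colon \mathcal{A} \to \mathcal{B}$, where $\mathcal{A} = \MM^{\gn{n}}$ and $\mathcal{B} = (\MM^{\mathsf{N}})^{\gn{n}}$.

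The final step is to transport $\alpha$ back through the bi-interpretation. Axiom~(iv) of $\tn{n}$, applied in $\MM$ and in $\MM^{\mathsf{N}}$, supplies isomorphisms $i_n \colon \MM \to \mathcal{A}^{\hn{n}}$ (directly $\MM$-definable) and $i_n \colon \MM^{\mathsf{N}} \to \mathcal{B}^{\hn{n}}$ (definable in $\MM^{\mathsf{N}}$, hence $\MM$-definable after translation through $\mathsf{N}$). Since $\alpha$ is an isomorphism of $\mathcal{L}_{\CT[\PA]}$-structures, the translation $\hn{n}$ lifts it to an $\MM$-definable isomorphism $\mathcal{A}^{\hn{n}} \to \mathcal{B}^{\hn{n}}$. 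Composing the three $\MM$-definable isomorphisms yields the desired $\MM$-definable isomorphism $\MM \to \MM^{\mathsf{N}}$, showing that $\mathsf{N}$ is $\MM$-isomorphic to $\id_\MM$.

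The main obstacle is the bookkeeping required to check that the hypotheses of Lemma~\ref{lem:indfact} are genuinely met in this configuration---in particular, verifying that the retraction isomorphism $\iota$ really does induce an $\MM$-definable isomorphism between the interpreted $\CT[\PA]$-models $\MM^{\gn{n}}$ and $\MM^{\mathsf{NM}\gn{n}}$, and that the three applications of $\gn{n}$ live on the shortest definable cuts. Once the axioms of $\tn{n}$ are unpacked in this way, both the appeal to Lemma~\ref{lem:indfact} and the transfer back through the bi-interpretation are routine.
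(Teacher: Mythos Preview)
Your proposal is correct and follows essentially the same approach as the paper: set up the triple $\MM_1 \rhd \MM_2 \rhd \MM_3$, apply $\gn{n}$ in each to obtain $\CT[\PA]$-models on the shortest definable cuts, invoke Lemma~\ref{lem:indfact} with $m=1$ to get an $\MM_1$-definable isomorphism between the first two of these, lift through $\hn{n}$, and compose with the $i_n$-isomorphisms supplied by axiom~(iv). The paper's write-up is terser and uses the notation $\NN_i$, $\MM'_i$ where you write $\mathcal{A}$, $\mathcal{B}$, $\mathcal{A}^{\hn{n}}$, $\mathcal{B}^{\hn{n}}$, but the argument is the same.
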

\begin{proof}
Let $\MM_1 \rhd \MM_2 \rhd \MM_3$ be models of $\tn{n}$
such that there is an $\MM_1$-definable isomorphism from $\MM_1$ onto $\MM_3$.
For each $i \in \{1,2,3\}$, let $\NN_i$ be the model
of $\CT[\PA]$ obtained by applying the interpretation $\gn{n}$
in $\MM_i$, and let $\MM'_i$ be the model of $\tn{n}$
obtained by applying $\hn{n}$ in $\NN_i$. Note that
the domain of each $\NN_i$ is the smallest definable
cut of $\MM_i$, by axioms \ref{it:t0n-2} of $\tn{n}$,  and that there is an $\MM_1$-definable isomorphism from $\NN_1$ onto $\NN_3$. See Figure \ref{fig:t0n-solid}.

\begin{figure}[htbp]
\begin{center}
\begin{tikzcd}[column sep = 4.5em, row sep = 4.5 em ]
 \MM_1 
 \arrow[r, symbol = \rhd]
 \arrow[rr, bend left = 30]
 \arrow[d, symbol = \rhd] 
 \arrow[dd, bend right = 40, "i_n"] & 
 \MM_2 
 \arrow[r, symbol = \rhd]
 \arrow[d, symbol = \rhd] 
 \arrow[dd, bend right = 40, "i_n"] & 
 \MM_3 
 \arrow[d, symbol = \rhd]
 \\
 \NN_1
 \arrow[r, bend right = 40, dashed]
 \arrow[rr, bend left = 20, dashed]
 \arrow[d, symbol = \rhd] & 
 \NN_2
 \arrow[d, symbol = \rhd] & 
 \NN_3
 \arrow[d, symbol = \rhd] \\
 \MM'_1 
 \arrow[r, bend right = 40, dashed] & 
 \MM'_2 &
 \MM'_3
\end{tikzcd}
\end{center}

\caption{The proof of Lemma \ref{lem:t0n-solid}. 
The solid arrows represent isomorphisms given directly by the assumptions about $\MM_1, \MM_2, \MM_3$, and the dashed arrows represent isomorphisms shown to exist during the argument. Composing the arrows gives an isomorphism
from $\MM_1$ onto $\MM_2$.} 

\label{fig:t0n-solid}
\end{figure}

We can use Lemma \ref{lem:indfact} for $m=1$ to infer that there is an $\MM_1$-definable isomorphism between $\NN_1$ and $\NN_2$. This isomorphism in turn clearly gives rise to an $\MM_1$-definable isomorphism between $\MM'_1$ and $\MM'_2$.

By axioms \ref{it:t0n-4} of $\tn{n}$, for each $i$ there 
is an $\MM_i$-definable (hence $\MM_1$-definable)
isomorphism between $\MM_i$ and $\MM'_i$.
Combining this with the isomorphism between
$\MM'_1$ and $\MM'_2$, we obtain
an $\MM_1$-definable isomorphism
between $\MM_1$ and $\MM_2$, which completes the proof.
\end{proof}

\begin{lemma}\label{lem:no-mix-ct-pa}
No model of $\PA$
is a retract of a model of $\CT[\PA]$.
In other words, if $\MM$, $\MM'$ are models of $\PA$, 
$\KK$ is a model of $\CT[\PA]$, and \mbox{$\MM\rhd\KK\rhd\MM'$}, then there is no $\MM$-definable isomorphism from $\MM$ onto $\MM'$.

Similarly, no model of $\CT[\PA]$ is a retract of a model of $\PA$.
\end{lemma}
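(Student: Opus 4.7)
The plan is to derive a contradiction with Tarski's undefinability of truth in both halves of the lemma. In each case, the retraction isomorphism will combine with the formalized categoricity argument from Section \ref{subsec:formalized-categoricity} to show that certain definable initial embeddings between the $\LPA$-reducts of the models are in fact isomorphisms. This lets us pull the truth predicate of the $\CT[\PA]$-model back along an $\LPA$-definable isomorphism to obtain an $\LPA$-definable truth predicate on a $\PA$-model.

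For the first half, assume $\MM, \MM' \vDash \PA$, $\KK \vDash \CT[\PA]$, $\MM \rhd \KK \rhd \MM'$ via $\mathsf{N}, \mathsf{M}$, with an $\MM$-definable isomorphism $\iota \colon \MM \to \MM'$. I will apply Corollary \ref{lem_n_minimality} to $\mathsf{N}$ (regarded as an interpretation of the $\LPA$-reduct of $\KK$, which is a model of $\PA$) to obtain an $\MM$-definable initial embedding $f \colon \MM \to \KK$, and to $\mathsf{M}$ to obtain a $\KK$-definable initial embedding $g \colon \KK \to \MM'$. Then $\iota^{-1} \circ g \circ f \colon \MM \to \MM$ is an $\MM$-definable initial embedding whose image is an $\MM$-definable cut, and hence equals $\MM$ by the induction scheme of $\PA$. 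Consequently $f$ is surjective, yielding an $\MM$-definable $\LPA$-isomorphism between $\MM$ and the $\LPA$-reduct of $\KK$. Finally I will define $T(x) := P^{\mathsf{N}}(f(x))$ in $\MM$, which is $\LPA$-definable because $\mathsf{N}$ is an interpretation into the $\LPA$-structure $\MM$ and so $P^{\mathsf{N}}$ is itself an $\LPA$-formula. Chasing the compositional truth axioms of $\CT[\PA]$ by external induction on formula complexity will then show that $\MM \vDash T(\qcr{\varphi}) \leftrightarrow \varphi$ for every standard $\LPA$-sentence $\varphi$, contradicting Tarski's theorem for $\MM$.

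The second half follows the same template with the roles of $\PA$-model and $\CT[\PA]$-model exchanged. From $\KK \rhd \MM \rhd \KK'$ with a $\KK$-definable isomorphism $\iota \colon \KK \to \KK'$, two applications of Corollary \ref{lem_n_minimality} combined with the absence of proper $\KK$-definable cuts---here using the full induction scheme of $\CT[\PA]$ extended to $(\LPA \cup \{P\})$-formulas---will produce an $\MM$-definable $\LPA$-isomorphism $e \colon \MM \to \KK'$. The formula $T(x) := P^{\mathsf{K}}(e(x))$, where $\mathsf{K} \colon \MM \rhd \KK'$, will then be an $\LPA$-formula in $\MM$ defining truth on $\MM$, again contradicting Tarski.

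The main obstacle is the language-tracking bookkeeping: one has to verify that $P^{\mathsf{N}}$ (resp.\ $P^{\mathsf{K}}$) is $\LPA$-definable rather than $(\LPA \cup \{P\})$-definable, which relies crucially on the $\CT[\PA]$-model being interpreted inside a pure $\LPA$-structure; and that the induction scheme of $\KK$ in the second half indeed rules out proper $(\LPA \cup \{P\})$-definable cuts. Apart from this, the one creative step is observing that the composition of the two initial embeddings with the inverse of the retraction isomorphism is a definable self-embedding of the base model as an initial segment, which induction forces to be surjective---this is what upgrades the one-sided inverse from the retraction into a genuine identification of the $\LPA$-reducts.
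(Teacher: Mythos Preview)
Your approach is essentially the paper's: both arguments use the formalized categoricity machinery to upgrade the retraction to an $\MM$-definable isomorphism between $\MM$ and the $\LPA$-reduct of $\KK$, and then pull back the truth predicate $P$ to contradict Tarski. The paper packages the first step as an application of Lemma~\ref{lem:indfact} with $m=0$, whereas you spell it out with two initial embeddings and the no-proper-definable-cut argument; these are the same thing.

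There is, however, one point that needs more care. You conclude by saying that $T(x):=P^{\mathsf{N}}(f(x))$ satisfies $\MM\vDash T(\qcr{\varphi})\leftrightarrow\varphi$ for every standard $\LPA$-sentence $\varphi$, ``contradicting Tarski's theorem for $\MM$''. But the interpretation $\mathsf{N}$ and the retraction isomorphism $\iota$ may carry parameters from $\MM$, so your $T$ may be only parametrically definable. Tarski's theorem does \emph{not} rule out a parametric formula that is disquotational for standard sentences: for instance, in any countable recursively saturated $\MM\vDash\PA$ there is an element $c$ coding $\Th(\MM)$, and then ``the $x$-th bit of $c$ is $1$'' is exactly such a formula. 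What Tarski does rule out, even with parameters, is a definition of \emph{satisfaction} for standard formulas at arbitrary (possibly nonstandard) arguments. The paper makes precisely this move: it defines $\sigma(x,y)$ expressing $P^{\mathsf{K}}(\subst(j(x),\name(j(y))))$ and observes that $\sigma(\qcr{\varphi},b)\leftrightarrow\varphi(b)$ for all standard $\varphi(v)$ and all $b\in\MM$, which is impossible. Your $T$ is in fact a full compositional truth class on $\MM$ (since $f$ is an $\LPA$-isomorphism and $P$ satisfies the $\CT$ axioms in $\KK$), so the same fix works for you: set $\sigma(x,y):=T(\subst(x,\name(y)))$ and invoke the parametric form of Tarski for satisfaction.
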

\begin{proof}
We prove only the first part of the statement,
as the proof of the other part is very similar. Suppose
that $\MM, \MM' \vDash \PA$ and $\KK \vDash \CT[\PA]$
with $\MM\rhd\KK\rhd\MM'$,
but there is an $\MM$-definable isomorphism from $\MM$ onto $\MM'$. Then, by Lemma \ref{lem:indfact} for $m = 0$, there exists an $\MM$-definable isomorphism from $\MM$ onto the $\mathcal{L}_{\PA}$-reduct of $\KK$.

We claim that such an isomorphism would make it possible to define a satisfaction predicate for $\MM$ in $\MM$, contradicting Tarski's theorem on undefinability of truth.
Indeed, let $\mathsf{K}: \MM \rhd \KK$, and let $j$ be the isomorphism from $\MM$ onto $\KK{\upharpoonright}_{\LPA}$. 
Then, since $\KK \vDash \CT[\PA]$,
the formula
\[\sigma(x,y) := \exists x'\, \exists y' \left[j(x) =^{\mathsf{K}} x' \land j(y) =^{\mathsf{K}} y') \land \left(\mathsf{K} \vDash P(\subst(x',\name(y')) \right)\right],\]
evaluated in $\MM$, correctly determines whether a
(standard) $\LPA$-formula $x$ is satisfied in $\MM$ by
an (arbitrary) element $y \in \MM$. 

Note that the formula $\sigma(x,y)$ may involve parameters from $\MM$ required to define the interpretation $\mathsf{K}$ or the isomorphism $j$. However, by Tarski's theorem,
not even a formula with parameters can be a definition of \emph{satisfaction} for formulas with free variables,
in contrast to merely being a definition of \emph{truth} for sentences.
\end{proof}

We can now complete the proof of Theorem \ref{thm:proper-solid}.

\begin{proof}[Proof of Theorem \ref{thm:proper-solid}]
By the definition of $T_n$ and Lemma \ref{lem:tn-proper-subtheory}, we already know
that $T_n$ is an r.e.~subtheory of $\PA$ containing
${\IS{n}} + {\exp} + {\neg\BS{n+1}}$. It remains
to show that $T_n$ is solid.

So, let $\MM_1 \rhd \MM_2 \rhd \MM_3$ be models of $T_n$
such that there is an $\MM_1$-definable isomorphism
between $\MM_1$ and $\MM_3$. We need to prove
that there is an $\MM_1$-definable
isomorphism between $\MM_1$ and $\MM_2$.

By the definition of $T_n$,
each $\MM_i$ satisfies either $\PA$ or $\tn{n}$. Moreover, clearly $\MM_1 \equiv \MM_3$. This leaves four cases to consider.

$1^\circ$ Each $\MM_i$ satisfies $\PA$. Then an $\MM_1$-definable isomorphism between $\MM_1$ and $\MM_2$ exists by the solidity of $\PA$.

$2^\circ$ Each $\MM_i$ satisfies $\tn{n}$. 
Then the isomorphism exists by Lemma \ref{lem:t0n-solid}.

$3^\circ$ $\MM_1 \vDash \PA$ and $\MM_2 \vDash \tn{n}$.
Let $\NN_2$ be the model of $\CT[\PA]$ obtained by applying the interpretation $\gn{n}$ in $\MM_2$, and let $\MM'_2$ be the model of $\tn{n}$ obtained by applying $\hn{n}$ in $\NN_2$.
Note that $i_n$ applied in $\MM_2$ is an isomorphism
between $\MM_2$ and $\MM'_2$, by axiom \ref{it:t0n-4} of $\tn{n}$. Let $\MM'_3$ be the model of $\PA$ obtained by applying in $\MM'_2$ the interpretation provided by the formulas defining the interpretation of $\MM_3$ in $\MM_2$,
but with all parameters of the latter replaced by their $(i_n)^{\MM_2}$-images.

\begin{center}
\begin{tikzcd}[column sep = 4.5em, row sep = 4.5 em ]
 \MM_1 
 \arrow[r, symbol = \rhd]
 \arrow[rr, bend left = 30]
 & 
 \MM_2 
 \arrow[r, symbol = \rhd]
 \arrow[d, symbol = \rhd] 
 \arrow[dd, bend left = 30, "i_n"] & 
 \MM_3 
 \arrow[dd, bend left = 30, dashed ] 
 \\
 & 
 \NN_2
 \arrow[d, symbol = \rhd] & 
 \\
 & 
 \MM'_2 
 \arrow[r, symbol = \rhd] &
 \MM'_3
\end{tikzcd}
\end{center}

Composing interpretations, 
we see that $\MM_1 \rhd \NN_2 \rhd \MM'_3$. 
Moreover, by assumption there is 
an $\MM_1$-definable isomorphism between $\MM_1$ and $\MM_3$,
and the isomorphism $(i_n)^{\MM_2}$ between $\MM_2$ and $\MM'_2$ induces an $\MM_2$-definable (thus, $\MM_1$-definable) isomorphism between $\MM_3$ and $\MM'_3$.
Composing the isomorphisms from $\MM_1$ onto $\MM_3$ and
from $\MM_3$ onto $\MM'_3$, we obtain an $\MM_1$-definable
isomorphism between $\MM_1$ and $\MM'_3$. However,
then the triple of models $\MM_1 \rhd \NN_2 \rhd \MM'_3$ witnesses that $\MM_1$ is a retract of $\NN_2$, contradicting
Lemma \ref{lem:no-mix-ct-pa}.

$4^\circ$ $\MM_1 \vDash \tn{n}$ and $\MM_2 \vDash \PA$. This case is similar to the previous one but slightly more subtle. 
Let $\NN_1 \vDash \CT[\PA]$ be obtained by applying $\gn{n}$ in $\MM_1$,
let $\MM'_1 \vDash \tn{n}$ be obtained by applying $\hn{n}$ in $\NN_1$, 
and let $\NN'_1 \vDash \CT[\PA]$ be obtained by applying $\gn{n}$ in $\MM'_1$.
Note that 
$(i_n)^{\MM_1}$ is an isomorphism
between $\MM_1$ and $\MM'_1$, by axiom \ref{it:t0n-4} of $\tn{n}$,
while $(j_n)^{\NN_1}$ is an isomorphism
between $\NN_1$ and $\NN'_1$, by axiom \ref{it:t0n-5}. 
Let $\MM'_2 \vDash \PA$ 
be obtained by applying in $\MM'_1$ the interpretation 
of $\MM_2$ in $\MM_1$,
but with parameters moved by $(i_n)^{\MM_1}$.
Let $\MM'_3 \vDash \tn{n}$ be obtained by applying in $\MM'_2$ the interpretation of $\MM_3$ in $\MM_2$,
again with parameters moved by $(i_n)^{\MM_1}$.
Finally, let $\NN'_3 \vDash \CT[\PA]$ be obtained by
applying $\gn{n}$ in $\MM'_3$.

\begin{center}
\begin{tikzcd}[column sep = 4.5em, row sep = 4.5 em ]
 \MM_1 
 \arrow[r, symbol = \rhd]
 \arrow[rr, bend left = 30]
 \arrow[d, symbol = \rhd] 
 \arrow[dd, bend left, "i_n"] 
 & 
 \MM_2 
 \arrow[r, symbol = \rhd]
 & 
 \MM_3 
 \\
 \NN_1
 \arrow[dd, bend right = 40, "j_n"] 
 \arrow[d, symbol = \rhd] 
 & 
 & 
 \\
 \MM'_1 
 \arrow[r, symbol = \rhd]
 \arrow[rr, bend right, dashed]
 \arrow[d, symbol = \rhd] & 
 \MM'_2 
 \arrow[r, symbol = \rhd] &
 \MM'_3 
 \arrow[d, symbol = \rhd] \\
 \NN'_1 
 \arrow[rr, bend right, dashed] &
 &
 \NN'_3
\end{tikzcd}
\end{center}

Composing interpretations, 
we see that $\NN_1 \rhd \MM'_2 \rhd \NN'_3$. 
Moreover, there is an $\MM'_1$-definable, and thus
$\NN_1$-definable, isomorphism from $\MM'_1$ onto $\MM'_3$,
which induces an $\NN_1$-definable isomorphism from
$\NN'_1$ onto $\NN'_3$.
This can be composed with the isomorphism $(j_n)^{\NN_1}$ to give an $\NN_1$-definable isomorphism between $\NN_1$ and $\NN'_3$. 
However, then the triple of models $\NN_1 \rhd \MM'_2 \rhd \NN'_3$ witnesses that $\NN_1$ is a retract of $\MM'_2$, contradicting Lemma \ref{lem:no-mix-ct-pa}.

This concludes the proof that $T_n$ is solid, and thus also the proof of Theorem \ref{thm:proper-solid}.
\end{proof}

\section{Solidity below PA: refinements}\label{sec:refinements}

The current section serves two main purposes. 
Initially, we analyze some
key aspects of the construction from Section \ref{sec:proper-solid}. This is covered by the first two subsections. Then, using the tools developed in this analysis, we obtain two improvements of Theorem \ref{thm:proper-solid}: 
firstly, that the solid theories can be weaker than $\PA$ in terms of not just provability, but also interpretability; secondly, that they can be made weak enough that extending them by any single true sentence, or even by all true $\Pi_n$ sentences for a fixed $n$, will still be insufficient to derive $\PA$.
These improvements are presented in Sections
\ref{subsec:solid-non-interpret} and \ref{subsec:Ali's question}, respectively. 

\subsection{Formalizing the bi-interpretability result}\label{subsec:formalizing-biint}

We start by proving a result that was
already announced above: the interpretations $\gn{n}$ and $\hn{n}$ defined in Section \ref{sec:proper-solid} 
witness not only the bi-interpretability of the specific models $(\N,\Th{\N})$ and $\KK$,
but work in a more general axiomatic context.

\begin{prop}\label{prop:biint_truth}
For each $n \ge 1$, $(\gn{n},\hn{n})$ is a bi-interpretation of
$\tn{n}$ with $\CT[\PA]$.
\end{prop}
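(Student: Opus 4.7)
Plan. The plan is to verify the two retract conditions of bi-interpretability directly, using $\gn{n}$ and $\hn{n}$ as the translations. One direction is handled essentially by design: axiom \ref{it:t0n-3} of $\tn{n}$ gives $\gn{n}\colon \tn{n}\rhd \CT[\PA]$, and axiom \ref{it:t0n-4} provides the isomorphism $i_n$ witnessing $\mathsf{id}\cong \gn{n}\hn{n}$ in $\tn{n}$. Thus two tasks remain: (a) show that $\hn{n}\colon \CT[\PA]\rhd \tn{n}$, and (b) exhibit a formula $\tilde{j}_n$ that $\CT[\PA]$ proves is an isomorphism from $\mathsf{id}$ to $\hn{n}\gn{n}$.

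For (a), we need $\CT[\PA]$ to prove each axiom of $\tn{n}$ translated via $\hn{n}$. The argument is a formalization of Lemmas~\ref{lem:facts-about-k} and~\ref{lem:in-and-jn} inside an arbitrary $\MM\vDash \CT[\PA]$, rather than specifically in $(\N,\Th{\N})$. All the required ingredients are available: full induction for $\mathcal{L}_1$-formulas (including those involving $P$); compositional truth giving access to full arithmetical reflection over $\PA$, and in particular to $\Con{\IS{n+1}}$; the partial satisfaction predicates $\Sat{\Sigma_{n+1}}$; and the arithmetized compactness theorem needed for the Henkin construction. Consistency of the theory~\eqref{eq:theory-1} inside $\MM$ follows from the moreover clause of Theorem~\ref{tw_flex_main}\ref{it:tw_flex_main-1} combined with compactness. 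Once the Henkin model and its pointwise $\Sigma_{n+1}$-definable part are constructed inside $\MM$, verification of axioms \ref{it:t0n-1}--\ref{it:t0n-5} of $\tn{n}$ in $\MM^{\hn{n}}$ proceeds just as in the arguments for the standard-model case.

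For (b), let $\tilde{j}_n(x,y)$ be the $\mathcal{L}_1$-formula defining ``$y$ is the $x$-th smallest element of $\hn{n}$''. This is essentially the map $j_n$ of Lemma~\ref{lem:in-and-jn}, but now definable in an arbitrary $\MM\vDash \CT[\PA]$, thanks to full induction in $\mathcal{L}_1$. By a version of Corollary~\ref{lem_n_minimality} adapted to $\CT[\PA]$ (where full induction removes the need for $\Sigma_n$-restrictedness), $\tilde{j}_n$ embeds $\MM$ as an initial segment of $\MM^{\hn{n}}$ and preserves the arithmetic operations. Its range coincides with $\delta_n^{\MM^{\hn{n}}}$, the domain of $\MM^{\hn{n}\gn{n}}$: by axiom \ref{it:t0n-2} of $\tn{n}$ (which holds in $\MM^{\hn{n}}$ by (a)) combined with the pointwise $\Sigma_{n+1}$-definability of $\MM^{\hn{n}}$ from $\MM$'s perspective, an element of $\MM^{\hn{n}}$ lies in $\delta_n$ iff all its internal $\Sigma_{n+1}$-definitions have size in the image of $\tilde{j}_n$, which is precisely the image itself. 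Finally, preservation of $P$ is forced by the construction of $\hn{n}$: the Henkin theory~\eqref{eq:theory-1} explicitly includes $\xi_n(\num{k})\leftrightarrow P(k)$ for each $k$, so $\mathcal{H}\vDash \xi_n(\tilde{j}_n(x))\leftrightarrow P^{\MM}(x)$; since $\xi_n$ is $\Sigma_1$ and $\MM^{\hn{n}}\preccurlyeq_{n+1}\mathcal{H}$, this lifts to $\MM^{\hn{n}}$, and $P^{\MM^{\hn{n}\gn{n}}}$ is by definition $\xi_n$ evaluated in $\MM^{\hn{n}}$.

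The main obstacle is (a): although the proofs of axioms \ref{it:t0n-1}--\ref{it:t0n-5} for $\MM^{\hn{n}}$ schematically follow the arguments for the standard model, some care is required for axiom \ref{it:t0n-2} (that $\delta_n$ is the shortest definable cut of $\MM^{\hn{n}}$) and for the nested translation in axiom \ref{it:t0n-5}. These require a careful separation of internal and external perspectives on definability inside $\MM^{\hn{n}}$ to transfer the reasoning from the case $\MM=(\N,\Th{\N})$ to an arbitrary model of $\CT[\PA]$.
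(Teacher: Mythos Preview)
Your plan is correct and follows essentially the same approach as the paper: use $\gn{n}$ and $\hn{n}$ as the witnessing interpretations, observe that axioms \ref{it:t0n-3} and \ref{it:t0n-4} of $\tn{n}$ handle one side, and for the other side formalize the construction of $\mathcal{H}$ and $\mathcal{K}_{n+1}(\mathcal{H})$ inside an arbitrary $\MM\vDash\CT[\PA]$, with $\tilde{j}_n$ the ``$x$-th smallest element'' map. One minor imprecision: your justification that the range of $\tilde{j}_n$ equals $\delta_n^{\MM^{\hn{n}}}$ mischaracterizes $\delta_n$ (it concerns whether some element lacks a small $\Sigma_{n+1}$-definition, not the definitions of $x$ itself); the cleaner argument, as in the paper, is that the image $\mathcal{J}$ of $\tilde{j}_n$ is the shortest $\MM$-definable cut of $\MM^{\hn{n}}$ (since $\MM$ has full $\mathcal{L}_1$-induction), hence $\mathcal{J}\subseteq\delta_n^{\MM^{\hn{n}}}$, while the reverse inclusion follows because every element of $\MM^{\hn{n}}$ has a $\Sigma_{n+1}$-definition coded in $\mathcal{J}$.
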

\begin{proof}
    Axioms \ref{it:t0n-3} and \ref{it:t0n-4} of $\tn{n}$ explicitly state that 
    $\gn{n}$ is an interpretation of $\CT[\PA]$
    and that $\gn{n}\!\hn{n}$ is isomorphic to the identity 
    interpretation. It remains to show that $\hn{n}$ is really an interpretation of $\tn{n}$ in $\CT[\PA]$,
    not merely in $(\N,\Th{\N})$, and that $\hn{n}\!\gn{n}$ is isomorphic to the identity interpretation provably in $\CT[\PA]$. This is a somewhat routine verification
    that $\CT[\PA]$ is strong enough to carry out various constructions involved in the definitions of
    $\gn{n}, \hn{n}, i_n, j_n$. We provide some details.
    
    The first step is to check that the construction
    of the models $\HH$ and $\KK$ prescribed by $\hn{n}$
    formalizes in $\CT[\PA]$. We begin by verifying that 
    $\CT[\PA]$ proves the consistency of the $\LPA\cup\{P\}$-definable theory
        \[U:= {\IS{n+1}} \cup 
        \{\xi_n(\num{k})\colon\, P(k)\} \cup
        \{\neg\xi_n(\num{\ell}) \colon\, \neg P(\ell)\}.\]
    (note that this is the formalized version of the theory appearing in (\ref{eq:theory-1}) 
    in the definition of $\hn{n}$). 
    Indeed, work in $\CT[\PA]$ and assume that $U$ 
    is inconsistent.
    Then for some disjoint finite sets $c,d$ of numbers
    we have
    \[\neg \Con{\IS{n+1} + \bigwedge_{k\in c} \xi_n(\num{k})\wedge \bigwedge_{\ell\in d} \neg\xi_n(\num{\ell})}\]
    However, then also
    \[\neg \Con{\IS{n+1} + 
    \forall x \left(\xi_n(x)\equiv \bigvee_{k\in c} x = \num{k}\right)}.\]
    Since $\xi_n$ is provably $\Sigma_1$-flexible over $\IS{n+1}$ in the sense of 
    Corollary \ref{cor:flexible-con},
    and $\bigvee_{k\in c} x = \num{k}$ is a $\Sigma_1$ formula (quantifier-free, in fact), 
    Corollary \ref{cor:flexible-con}
    gives us $\neg \Con{\IS{n+1}}$.
    But we are working in $\CT[\PA]$, so we do have
    $\Con{\IS{n+1}}$, and thus we reach a contradiction.
    This concludes the proof of $\Con{U}$ in $\CT[\PA]$.

    From now on it will be convenient to assume that we are working with
    a given model $\MM\vDash\CT[\PA]$. We already know that $\MM\vDash\Con{U}$, so we can apply the usual construction associated with the  arithmetized completeness theorem (see e.g.~\cite[Theorem 13.13]{kaye:models})
    to $U$ in $\MM$.
    We only need the construction in its most basic form:
    produce a ($\Delta_1(P)$-definable) henkinized consistent extension of $T$ and take the ($\Delta_2(P)$-definable) leftmost path through the infinite binary tree whose paths correspond to complete consistent extensions of the henkinization. Thus we obtain a model of $U$,
    say $\HH^\MM$, definable in $\MM$ by a formula that by abuse of notation we could call $\HH$. 
    In fact, the structure $\HH^\MM$ has an $\MM$-definable satisfaction relation (or, in other words, $\MM$-definable elementary diagram). 
    Thus, we can easily define the structure $\MM^{\hn{n}}:= (\mathcal{K}^{n+1})^\MM(\mathcal{H}^\MM)$ consisting of those elements of $\HH^\MM$ that are definable in $\HH^\MM$ by $\Sigma_{n+1}$ formulas in the sense of $\MM$. 
    In contrast to $\HH^\MM$, from the point of view of $\MM$ the structure $\MM^{\hn{n}}$ is only a partial model in the sense that $\MM$ cannot define the full satisfaction relation for $\MM^{\hn{n}}$ but only its universe and operations. Of course, that is already enough to define 
    satisfaction for $\Sigma_{m}$ formulas for any fixed $m$. In other words, we have $\MM$-definable predicates $\HH\vDash \varphi(x)$ and $\hn{n}\vDash_m \varphi(x)$, for any $m \in \omega$, which agree with satisfaction 
    in $\HH^\MM$ resp.~$\MM^{\hn{n}}$ for atomic formulas and satisfy the usual inductive clauses of a definition of satisfaction for all $\MM$-formulas resp.~for all $\MM$-formulas that belong to the class $\Sigma_m$.
    
    We still have to check that $\hn{n}$ provides an interpretation of $\tn{n}$, or in other words, that $\mathcal{M}^{\hn{n}}\vDash \tn{n}$. In the process,
    we will also check that $\hn{n}$ and $\gn{n}$ give
    rise to a bi-interpretation. 
    
    The verification that $\mathcal{M}^{\hn{n}}$ is a model of $\IS{n} + \exp$ and that $\Sigma_{n+1}$-elementarity holds between
    $\MM^{\hn{n}}$ and $\mathcal{H}^\MM$, i.e.~that 
    \[\mathcal{M}\vDash\forall \varphi \! \in \! \form_{\Sigma_{n+1}} \forall x \! \in \! \hn{n} \,
    \bigl(\hn{n}\vDash_{n+1}\varphi(x)\leftrightarrow \mathcal{H}\vDash\varphi(x)\bigr),\]
    is straightforward. 
    
    Recall the map named $j_n$ in Lemma \ref{lem:in-and-jn} 
    and first introduced in the proof of Lemma \ref{ultimate_dedekind}, namely the one taking $k \in \MM$ to the $k$-th smallest element of $\MM^{\hn{n}}$. 
    By the argument from Section \ref{subsec:formalized-categoricity},
    the map $j_n$ is an $\MM$-definable embedding 
    of $\mathcal{M}{\upharpoonright}_{\LPA}$ onto an initial segment of $\mathcal{M}^{\hn{n}}$. Let $\mathcal{J}$ be the ($\MM$-definable) image of $j_n$. 
    We know that $\mathcal{J}$ 
    is also an initial segment of $\HH^\MM$,
    since every element of $\HH^\MM$ that
    is below $j_n(k)$ is named by a numeral from $\MM$,
    so it is $\Sigma_{n+1}$-definable in the sense of $\MM$. 
    Moreover, $\mathcal{J}$ is a proper cut in $\mathcal{M}^{\hn{n}}$: otherwise, $\mathcal{M}^{\hn{n}}$ would be isomorphic to $\MM{\upharpoonright_{\LPA}}$, 
    which cannot happen by Corollary \ref{cor:flexible-con}, 
    because $\MM \vDash \Con{\IS{n+1}}$, 
    while $\HH^\MM$ and as a consequence $\MM^{\hn{n}}$ both
    satisfy $\exists x\, \xi_n(x)$. 
    
    By the definition of $\MM^{\hn{n}}$
    and $\Sigma_{n+1}$-elementarity,
    each element of $\MM^{\hn{n}}$ can be $\Sigma_{n+1}$-defined by a formula in $\mathcal{J}$.
    This lets us carry out the usual argument
    showing that $\MM^{\hn{n}} \vDash \neg \BS{n+1}$,
    so $\MM^{\hn{n}}$ validates axiom \ref{it:t0n-1} of
    $\tn{n}$.
    
    Clearly, $\mathcal{J}$ is the smallest $\MM$-definable cut in $\mathcal{M}^{\hn{n}}$ (and thus, also the
    smallest $\mathcal{M}^{\hn{n}}$-definable cut), because
    otherwise $\MM$ would define its own proper cut.
    This implies in particular that
    $\mathcal{J} \subseteq \delta_n^{\MM^{\hn{n}}}$.
    But we also have $\delta_n^{\MM^{\hn{n}}} \subseteq \mathcal{J}$, because each element of $\MM^{\hn{n}}$ 
    has a $\Sigma_{n+1}$ definition in $\mathcal{J}$.
    So, $\delta_n^{\MM^{\hn{n}}} = \mathcal{J}$,
    and thus $\MM^{\hn{n}}$ satisfies axioms \ref{it:t0n-2} of $\tn{n}$. 
    
    For each $k \in \MM$, we have that $P(k)$ holds in $\MM$ exactly if $\xi_n(j_n(k))$ holds in $\HH^\MM$. This is because $\MM \vDash (\HH \vDash U)$ and 
    $j_n(k)$ is the element named by the numeral $\num{k}$
    in $\HH^\MM$. The truth values of $\xi_n$ are the same in $\MM^{\hn{n}}$ as in $\HH^\MM$, by $\Sigma_{n+1}$-elementarity.
    So, by the definition of $\gn{n}$, we 
    indeed have $\MM^{\hn{n}\!\gn{n}}\vDash \CT[\PA]$,
    which means that $\MM^{\hn{n}}$ satisfies axioms \ref{it:t0n-3}. Moreover, we have just shown that $j_n$ is an isomorphism between $\MM$ and $\MM^{\hn{n}\!\gn{n}}$. This also implies that (the map defined by the same formula as) $j_n$ is an isomorphism between $\MM^{\hn{n}\!\gn{n}}$
    and $\MM^{\hn{n}\!\gn{n}\!\hn{n}\!\gn{n}}$, so
    $\MM^{\hn{n}}$ satisfies axiom \ref{it:t0n-5}.

    Finally we argue that $\MM^{\hn{n}}$ satisfies (iv). 
    Since $\mathcal{J} = \delta_n^{\MM^{\hn{n}}}$ is the shortest cut in $\MM^{\hn{n}}$, and each element of $\MM^{\hn{n}}$ is definable via a $\Sigma_{n+1}$-definition from $\mathcal{J}$, the definition of $i_n$ makes sense: each element of $\MM^{\hn{n}}$ has a least $\Sigma_{n+1}$-definition. To verify that $i_n$ is indeed an isomorphism 
    between $\MM^{\hn{n}}$ and $\MM^{\hn{n}\!\gn{n}\!\hn{n}}$,
    one uses the fact that $\MM$ and $\MM^{\hn{n}\!\gn{n}}$ are isomorphic via $j_n$.
\end{proof}

\subsection{Modularizing the construction}\label{subsec:modularizing}

In the proof of Theorem \ref{thm:proper-solid}, 
we made use 
of Lemma \ref{lem:no-mix-ct-pa}: no model of $\PA$ can
be a retract of a model of $\CT[\PA]$, and vice versa.
We now carry out a more general study of families of theories
with this extreme form of non-bi-interpretability property.
Infinite families of this kind will be needed 
in our proofs of refinements of Theorem \ref{thm:proper-solid}.

\begin{definition}
    We say that a family $\{U_k\}_{k\in\omega}$ of theories is \emph{retract-disjoint} if for any $k,n\in\omega$ the following holds: if $\MM\vDash U_k$ and $\NN\vDash U_n$ and $\MM$ is a retract of $\NN$, then $k=n$.
\end{definition}

\begin{definition}
    Let $\{U_k\}_{k\in\omega}$ be a sequence of theories and $\{\varphi_k\}_{k\in\omega}$ be a sequence of sentences. The symbol $\bigoplus_k (U_k|\varphi_k)$ denotes the theory $\{\varphi_k\rightarrow \psi \colon\, k \in \omega, \psi\in U_k\}$.
\end{definition}

\begin{prop}\label{pres_solid_disj_sum}
    Suppose that $\{U_k\}_{k \in \omega}$ is a family of solid theories, $V$ is a theory, and $\{\varphi_k\}_{k\in\omega}$ is a sequence of sentences 
    with the following properties:
    \begin{enumerate}
        \item the sentences $\varphi_k$ are pairwise inconsistent, 
        \item the theory $V \cup \{\neg\varphi_k \colon\, k\in\omega\}$ is solid, 
        \item the family $\{V \cup \{\neg\varphi_k \colon\, k\in\omega\}\} \cup \{U_k\}_{k \in \omega}$ is retract-disjoint.
    \end{enumerate}
    Then the theory $V \cup \bigoplus_k(U_k|\varphi_k)$ is solid.
\end{prop}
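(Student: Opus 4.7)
The plan is a straightforward case analysis leveraging the fact that the theory $V \cup \bigoplus_k(U_k|\varphi_k)$ describes a ``disjunctive'' universe: in any model, at most one $\varphi_k$ holds (by pairwise inconsistency), so the model either satisfies some $\varphi_k$ together with $U_k$, or satisfies $V \cup \{\neg \varphi_k : k \in \omega\}$.

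First, I would fix a triple of models $\MM_1 \rhd \MM_2 \rhd \MM_3$ of $V \cup \bigoplus_k (U_k|\varphi_k)$ together with an $\MM_1$-definable isomorphism $\iota$ between $\MM_1$ and $\MM_3$, and observe that this witnesses that $\MM_1$ is a retract of $\MM_2$. Since $\MM_1 \cong \MM_3$, these two models satisfy the same sentences, so either there exists a (unique) $k$ with $\MM_1, \MM_3 \vDash \varphi_k$ (and hence $\vDash U_k$), or $\MM_1, \MM_3 \vDash \{\neg \varphi_k : k \in \omega\}$ (and hence $\vDash V \cup \{\neg \varphi_k : k \in \omega\}$). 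This gives two principal cases.

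In the first case, where $\MM_1, \MM_3 \vDash U_k$ for some $k$, consider $\MM_2$. If $\MM_2 \vDash \varphi_k$, then $\MM_2 \vDash U_k$, and solidity of $U_k$ directly gives the desired $\MM_1$-definable isomorphism between $\MM_1$ and $\MM_2$. Otherwise, $\MM_2$ satisfies either $\varphi_j$ for some $j \neq k$ (hence $U_j$) or $\neg \varphi_j$ for all $j$ (hence $V \cup \{\neg \varphi_j : j\in \omega\}$). In either subcase, $\MM_1$ and $\MM_2$ belong to distinct members of the family $\{V \cup \{\neg \varphi_k : k \in \omega\}\} \cup \{U_k\}_{k \in \omega}$, so the retract $\MM_1$ of $\MM_2$ contradicts retract-disjointness. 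The second principal case, where $\MM_1, \MM_3 \vDash V \cup \{\neg \varphi_k : k \in \omega\}$, is symmetric: if $\MM_2$ also models this theory, apply its solidity; otherwise $\MM_2 \vDash U_j$ for some $j$, and retract-disjointness is again violated.

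No step is especially hard; the only minor subtlety is making sure to apply solidity of $U_k$ (or of $V \cup \{\neg \varphi_k : k \in \omega\}$) to the \emph{full} triple $\MM_1 \rhd \MM_2 \rhd \MM_3$, which is legitimate because in the relevant subcases all three models satisfy the same theory in the family, the interpretations are inherited from the ambient interpretations, and the isomorphism $\iota$ between $\MM_1$ and $\MM_3$ is already $\MM_1$-definable. Otherwise the argument is essentially a bookkeeping exercise combining $\MM_1 \equiv \MM_3$ with retract-disjointness and the solidity assumptions.
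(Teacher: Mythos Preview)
Your proposal is correct and follows essentially the same approach as the paper: the paper also observes that a retraction between models of $V \cup \bigoplus_k(U_k|\varphi_k)$ forces both models to lie in the same member of the family $\{V \cup \{\neg\varphi_k : k\in\omega\}\} \cup \{U_k\}_{k \in \omega}$ (by retract-disjointness and pairwise inconsistency of the $\varphi_k$), and then invokes solidity of that member. The only difference is that the paper compresses your subcase analysis into the single remark ``by our assumptions, exactly one of the following two cases holds,'' whereas you spell out explicitly how the mixed cases contradict retract-disjointness.
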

\begin{proof}
    Assume that $\MM \vDash V \cup \bigoplus_k(U_k|\varphi_k)$ and there is a retraction $(\mathsf{N},\mathsf{M})$ such that $\MM^{\mathsf{N}}\vDash  V \cup \bigoplus_k(U_k|\varphi_k)$. Put $\NN:= \MM^{\mathsf{N}}$. By our assumptions, exactly one of the following two cases holds:
    \begin{enumerate}[1$^\circ$]
        \item there is $k\in\omega$ such that both $\MM$ and $\NN$ are models of $U_k$,
        \item both $\MM$ and $\NN$ are models of 
        $V + \{\neg\varphi_k \colon\, k\in\omega\}$.
    \end{enumerate}
    By solidity of each of the relevant theories,
    in either case there is 
    an $\MM$-definable isomorphism from 
    $\MM$ onto $\NN$.
\end{proof}

\begin{remark}\label{uwaga:skonczone_sumy}
Officially, Proposition {\ref{pres_solid_disj_sum}} and the definitions preceding it deal with an infinite family of theories. However, it can be easily checked that the finite case is covered by it as well: 
for a fixed $n$, a family $\{V\} \cup \{U_i\}_{i < n}$ of theories and a family $\{\varphi_i\}_{i < n}$ of sentences satisfying the assumptions of Proposition {\ref{pres_solid_disj_sum}} with $n$ in place of $\omega$, 
the theory  
${V} \cup {\bigoplus_{i < n}(U_i|\varphi_i)}$ (defined in the obvious way)
is solid.
\end{remark}

\begin{lemma}\label{lem:truthin}
The family $\{\CT^k[\PA]\}_{k\in\omega}$ is retract-disjoint.
\end{lemma}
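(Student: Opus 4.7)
The plan is to assume for contradiction that $\MM \vDash \CT^k[\PA]$ is a retract of $\NN \vDash \CT^n[\PA]$ with $k \neq n$, and derive a contradiction with Tarski's theorem on the undefinability of truth, generalizing the argument of Lemma \ref{lem:no-mix-ct-pa}. Setting $m = \min(k,n)$, the larger of the two theories contains the extra truth predicate $P_{m+1}$ for $\mathcal{L}_m$-sentences, which is what will be turned against itself.

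First I would unpack the retraction as $\MM \rhd \NN \rhd \MM'$ together with an $\MM$-definable isomorphism $\MM \cong \MM'$, where the interpretation $\mathsf{N}\colon \MM \rhd \NN$ uses $\mathcal{L}_k$-formulas of $\MM$ and $\mathsf{M}\colon \NN \rhd \MM'$ uses $\mathcal{L}_n$-formulas of $\NN$. This isomorphism restricts to an isomorphism of the $\mathcal{L}_m$-reducts $\MM{\upharpoonright}_{\mathcal{L}_m} \cong \MM'{\upharpoonright}_{\mathcal{L}_m}$. Next I would apply Lemma \ref{lem:indfact} with parameter $m$, taking the interpretations $\mathsf{N}_i$ of $\CT^m[\PA]$ to be the $\mathcal{L}_m$-reduct interpretations of $\MM$, $\NN$ and $\MM'$. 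The domain of each such interpretation is all of the underlying model, which indeed coincides with its shortest definable cut, since every model of $\CT^k[\PA]$ (resp.\ $\CT^n[\PA]$) satisfies full induction in its own language. Lemma \ref{lem:indfact} then yields an $\MM$-definable isomorphism $\MM{\upharpoonright}_{\mathcal{L}_m} \cong \NN{\upharpoonright}_{\mathcal{L}_m}$ and an $\NN$-definable isomorphism $\NN{\upharpoonright}_{\mathcal{L}_m} \cong \MM'{\upharpoonright}_{\mathcal{L}_m}$.

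To finish, I would split on the direction of the inequality and use one of these two isomorphisms to transport $P_{m+1}$ into a self-applicable truth predicate. If $k < n$, then $m = k$ and $P_{k+1}^\NN$ is an $\mathcal{L}_k$-definable predicate of $\MM$ via $\mathsf{N}$; combining it with the $\MM$-definable isomorphism $\MM \cong \NN{\upharpoonright}_{\mathcal{L}_k}$ and applying $\subst$ and $\name$, exactly as in the proof of Lemma \ref{lem:no-mix-ct-pa}, one gets an $\mathcal{L}_k$-formula defining satisfaction for $\mathcal{L}_k$-formulas in $\MM$, contradicting Tarski. If instead $k > n$, the symmetric argument applies: $P_{n+1}^{\MM'}$ is $\mathcal{L}_n$-definable in $\NN$ via $\mathsf{M}$, and combined with the $\NN$-definable isomorphism $\NN \cong \MM'{\upharpoonright}_{\mathcal{L}_n}$ it yields an $\mathcal{L}_n$-definable satisfaction predicate on $\NN$, again contradicting Tarski.

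The main point to verify carefully is that the Tarskian equivalences for $P_{m+1}$ (available for standard $\mathcal{L}_m$-formulas in the original model) really do transfer through both the interpretation and the isomorphism, so that the resulting formula behaves as a satisfaction predicate for standard $\mathcal{L}_m$-formulas on $\MM$ (respectively $\NN$) and the usual diagonal argument goes through. Beyond this, the proof is a direct generalization of Lemma \ref{lem:no-mix-ct-pa}, which is the case $m = 0$ corresponding to the pair $\CT^0[\PA] = \PA$ and $\CT^1[\PA] = \CT[\PA]$; the cut hypothesis of Lemma \ref{lem:indfact} is automatic for models of full induction, so no additional coding work is needed.
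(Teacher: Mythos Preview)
Your proposal is correct and follows essentially the same route as the paper: apply Lemma \ref{lem:indfact} at level $m=\min(k,n)$ to obtain definable isomorphisms of the $\mathcal{L}_m$-reducts, then transport the extra truth predicate $P_{m+1}$ across the appropriate isomorphism to violate Tarski's theorem. The only difference is cosmetic: the paper disposes of the case split with a ``without loss of generality'' and always pulls $P_{m+1}$ back into $\MM_1$, whereas you treat the two directions $k<n$ and $k>n$ explicitly; your explicit verification that the shortest-cut hypothesis of Lemma \ref{lem:indfact} is automatic under full induction is also a nice touch that the paper leaves implicit.
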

\begin{proof}
The argument is a slight generalization of the proof of
Lemma \ref{lem:no-mix-ct-pa}.
Fix $\mathcal{M}_1\vDash \CT^k$ and $\mathcal{M}_2\vDash \CT^m$ and assume that $\mathcal{M}_1$ is a retract of $\mathcal{M}_2$ as witnessed by the retraction $(\mathsf{M}_2, \mathsf{M}_1)$. Without loss of generality assume that $\mathcal{M}_2 = \mathcal{M}_1^{\mathsf{M}_2}$. Aiming at a contradiction assume further that $k\neq m$. Let $\ell:=\min\{k,m\}$. 
By Lemma \ref{lem:indfact}, 
we conclude that for $i\in \{1, 2\}$ the 
$\mathcal{L}_\ell$-reducts
of $\mathcal{M}_i$ and $\mathcal{M}_{3-i}$ are isomorphic via an $\mathcal{M}_i$-definable isomorphism.  

Assume 
that $\ell=k < m$ (the other case is treated analogously). 
Arguing as in the proof
of Lemma \ref{lem:no-mix-ct-pa}, we can use the $(k+1)$-th truth predicate of $\MM_2$ to define satisfaction for 
$\mathcal{L}_k$-formulas in $\MM_1$ within $\MM_1$,
contradicting the undefinability of truth.
\end{proof}

The next proposition and its consequence, Corollary \ref{cor_biint_pres_retract}, will play a key role in verifying retract-disjointness of various families of theories.
It is probably most convenient to follow the statement of the proposition and its proof while looking at
Figure \ref{fig:lem_o_wezu} below.

\begin{prop}\label{prop_lem_o_wezu}
    Suppose that $\MM, \NN, \KK, \mathcal{G}$ are structures such that:
    \begin{itemize}
        \item $\MM$ is a retract of $\NN$ via the retraction $(\mathsf{N}, \mathsf{M})$, 
         \item $\mathcal{N}$ is a retract of $\mathcal{G}$ via the retraction $(\mathsf{G},\mathsf{N_\star})$,
        \item $\MM$ is bi-interpretable with $\KK$ via the bi-interpretation $(\mathsf{K},\mathsf{M_\circ})$.
        \end{itemize}
    Then $\KK$ is a retract of $\mathcal{G}$. Moreover, the witnessing retraction can be chosen such that the isomorphism between $\MM^{\mathsf{NG}}$ and the copy of $\mathcal{G}$ in $\MM^{\mathsf{K}}$ is $\MM$-definable.
\end{prop}
\begin{figure}
    \centering
    \begin{tikzcd}[column sep=3em, row sep=6ex,
execute at end picture={
\node[anchor=center,xshift=-5em,yshift=-5em] at (current bounding box.center)
{
\begin{tabular}{l}
$\textcolor{darkgray}{\NN\simeq\MM^\mathsf{N}}$\\[1.5ex]
$\textcolor{darkgray}{\mathcal{G}\simeq\MM^\mathsf{NG}}$\\[1.5ex]
$\textcolor{darkgray}{\KK\simeq\MM^\mathsf{K}}$
\end{tabular}
};
}
]
\mathcal{M} \arrow[rr, bend left = 35, "i"] \arrow[dd, bend right = 50] \arrow[d,symbol=\rhd,"\;\mathsf{K}"] \arrow[r, symbol=\rhd^\mathsf{N}] &
\MM\mathrlap{^\mathsf{N}} \arrow[dd, bend right = 50, "\,j"] \arrow[r,symbol={\phantom{^\mathsf{N}}\rhd^\mathsf{M}}] \arrow[d,symbol=\rhd,"\;\mathsf{G}"]  &
\mathcal{M}\mathrlap{^\mathsf{NM}} &
{} \arrow[ddddd,shorten <=-2em,shorten >=2em,xshift=-0.5em,dash,dashed] &
\mathcal{M} \arrow[rr, bend left = 35]\arrow[d,symbol=\rhd, "\;{\mathsf{K}}"] \arrow[r,symbol=\rhd^{\mathsf{N}}] \arrow[dd, bend right = 50] &
\mathcal{N} \arrow[r,symbol=\rhd^{\mathsf{M}}] &
\widetilde{\MM}
\\
\MM\mathrlap{^\mathsf{K}} \arrow[dd, bend right = 50] \arrow[d,symbol=\rhd,"\;\mathsf{M_\circ}"] &
\MM\mathrlap{^\mathsf{NG}} \arrow[d,symbol=\rhd,"\;\mathsf{N_\star}"] &
&
{} &
\KK \arrow[dd, bend right = 50] \arrow[d,symbol=\rhd,"\;{\mathsf{M_\circ}}"] &
&
\\
\MM\mathrlap{^\mathsf{KM_\circ}} \arrow[d, symbol = \rhd,"\;\mathsf{K}"] &
\MM\mathrlap{^\mathsf{NGN_\star}} &
&
{} &
\MM_1\arrow[rr, bend left  = 35, "i_1", dashed] \arrow[d, symbol = \rhd, "\;{\mathsf{K}}"] \arrow[r,symbol=\rhd^{\mathsf{N}}] &
\NN_1 \arrow[dd, bend right = 50, "\,j_1", dashed] \arrow[d, symbol = \rhd, "\;{\mathsf{G}}"] \arrow[r,symbol=\rhd^{\mathsf{M}}] &
\widetilde{\MM}_1 \arrow[dd, "\widehat{j_1}", dashed]
\\
\MM\mathrlap{^\mathsf{KM_\circ K}} &
&
&
{} &
\KK_1 \arrow[rrdd, bend right = 40, dashed] &
\mathcal{G}_1 \arrow[d, symbol = \rhd, "\;{\mathsf{N_\star}}"] &
\\
&
&
&
{} &
&
\NN_2 \arrow[r,symbol=\rhd^{\mathsf{M}}] &
\MM_2 \arrow[d, symbol = \rhd, "\;{\mathsf{K}}"]
\\
&
&
&
{} &
&
&
\KK_2
\end{tikzcd}
    \caption{Proposition \ref{prop_lem_o_wezu} and its proof. The diagram on the left illustrates the situation described by the assumptions of the proposition. The diagram on the right illustrates the proof, using the notational conventions introduced at the beginning 
    of the proof.}
    \label{fig:lem_o_wezu}
\end{figure}
In the statement of the proposition, the mere fact 
that $\KK$ is a retract of $\mathcal{G}$ can be obtained from
the transitivity of retracts, which is a special case of the proposition with a considerably simpler proof. However,
a more involved argument seems needed to obtain 
the definability relation expressed in
the ``moreover'' part of the statement.

\begin{proof}
    Note that, by our convention, $\NN$ is isomorphic to $\MM^{\mathsf{N}}$
    and $\KK$ is isomorphic to $\MM^{\mathsf{K}}$; the interpretations
    $\mathsf{N}$ and $\mathsf{M}$ witness
    that $\MM$ is a retract of $\NN$; etc. Below, we will identify $\NN$ with $\MM^{\mathsf{N}}$, $\KK$ with $\MM^{\mathsf{K}}$, and $\mathcal{G}$ with $\NN^{\mathsf{G}}$ to simplify the notation.
    Also for the sake of notational simplicity, we assume that all the interpretations and isomorphisms involved are definable without parameters; otherwise, nothing substantial would change but we would have to keep track of how the parameters are mapped by various isomorphisms.

Let
\begin{gather*}
    \widetilde{\MM}:=\NN^\mathsf{M},\\
    \MM_1:=\mathcal{K}^{\mathsf{M_\circ}},\quad\mathcal{N}_1:= \MM_1^{\mathsf{N}},\quad\widetilde{\MM}_1:=\NN_1^\mathsf{M},\quad\KK_1:=\MM_1^{\mathsf{K}},\quad\mathcal{G}_1:=\NN_1^{\mathsf{G}},\\
    \NN_2:=\mathcal{G}_1^{\mathsf{N_\star}},\quad\MM_2:=\NN_2^{\mathsf{M}},\quad\KK_2:=\MM_2^{\mathsf{K}}\text{.}
\end{gather*}
By our assumptions, $\mathsf{K}$ and $\mathsf{M_\circ}$ witness that $\KK$ is a retract of $\MM$, so there is an $\MM$-definable isomorphism $\MM\rightarrow \MM_1$. This isomorphism induces further $\MM$-definable isomorphisms  $\NN\rightarrow \NN_1$ 
and $\widetilde{\MM}\rightarrow\widetilde{M}_1$. 
Thus $\mathcal{G}$ is $\MM$-definably isomorphic to $\mathcal{G}_1$,
and the interpretations 
$\mathsf{G}$ and $\mathsf{N_\star}$ witness that $\mathcal{N}_1$ is a retract of $\mathcal{G}_1$. 
Moreover, since $\NN$ and $\mathcal{G}^{\mathsf{N_\star}}$ are $\NN$-definably isomorphic, say via $j$, 
there is an $\NN_1$-definable isomorphism $j_1:\NN_1\rightarrow\NN_2$. Composing the interpretations $\mathsf{M_\circ}$, $\mathsf{N}$, and $\mathsf{G}$ witnesses that $\mathcal{G}_1$ is interpretable in $\mathcal{K}$. Also, $j_1$ induces an $\mathcal{N}_1$-definable isomorphism $\widehat{j_1}$ between $\widetilde{M}_1$ and $\MM_2$; and
$\KK_2$ is interpretable in $\mathcal{G}_1$ via the composition 
of $\mathsf{N_\star}$, $\mathsf{M}$,
and $\mathsf{K}$.

So, we have interpretations 
of $\mathcal{G}_1$ in $\KK$ and 
of $\KK_2$ in $\mathcal{G}_1$.
We want to show that there is 
a $\KK$-definable isomorphism between 
$\KK$ and $\KK_2$.
Note firstly that, by the choice of $\mathsf{M}$ and $\mathsf{N}$, there is an $\MM$-definable isomorphism $i: \MM\rightarrow\widetilde{\MM}$.
Since $\MM$ and $\MM_1$ are isomorphic, the formula defining $i$ in $\MM$ determines an $\MM_1$-definable isomorphism $i_1\colon\MM_1\rightarrow\widetilde{\MM}_1$.
Composing with $\widehat{j_1}$ yields
an $\MM_1$-definable isomorphism $\MM_1\rightarrow \MM_2$,
which induces 
an $\MM_1$-definable (and hence $\KK$-definable) isomorphism $\KK_1\rightarrow \KK_2$. 
But $\mathsf{K}$ and $\mathsf{M_\circ}$ are chosen
so as to witness the bi-interpretability
of $\MM$ and $\KK$, so there is a $\KK$-definable
isomorphism $\KK \rightarrow \KK_1$.
Composition with the isomorphism 
$\KK_1\rightarrow \KK_2$ just constructed gives the desired $\KK$-definable isomorphism $\KK \rightarrow \KK_2$. 
 
This completes both the proof that $\KK$ is a retract of $\mathcal{G}$ and of the ``moreover'' clause, because we have shown that $\KK$ is a retract of $\mathcal{G}_1$ and that $\mathcal{G}_1$ is $\MM$-definably isomorphic to $\mathcal{G}$.
\end{proof}

\begin{corollary}\label{cor_biint_pres_retract}
Suppose that $\{U_k\}_{k\in\omega}$ and $\{V_k\}_{k\in\omega}$ are two sequences of theories such that for each $k$, the theory $U_k$ is bi-interpretable with $V_k$. Then if $\{U_k\}_{k\in\omega}$ is retract-disjoint, then $\{V_k\}_{k\in\omega}$ is retract-disjoint.
\end{corollary}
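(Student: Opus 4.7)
The plan is to argue by contrapositive. Assume that $\{V_k\}_{k\in\omega}$ fails to be retract-disjoint, witnessed by distinct indices $k, n$ and models $\mathcal{M} \vDash V_k$, $\mathcal{N} \vDash V_n$ such that $\mathcal{M}$ is a retract of $\mathcal{N}$. I would then construct models $\mathcal{K} \vDash U_k$ and $\mathcal{G} \vDash U_n$ witnessing that $\{U_k\}_{k\in\omega}$ also fails to be retract-disjoint, contradicting the hypothesis.

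The first step is to promote the theory-level bi-interpretability of $U_k$ with $V_k$ to structure-level bi-interpretability. Let $\mathsf{U}_k : V_k \rhd U_k$ and $\mathsf{V}_k : U_k \rhd V_k$ be the translations giving the bi-interpretation, and set $\mathcal{K} := \mathcal{M}^{\mathsf{U}_k}$. Then $\mathcal{K} \vDash U_k$, and because the isomorphisms $\mathsf{U}_k \mathsf{V}_k \cong \id$ and $\mathsf{V}_k \mathsf{U}_k \cong \id$ are provable in the respective theories, they specialize to $\mathcal{M}$- and $\mathcal{K}$-definable isomorphisms, exhibiting $\mathcal{M}$ and $\mathcal{K}$ as bi-interpretable structures. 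Applying the analogous construction on the $V_n$-side, set $\mathcal{G} := \mathcal{N}^{\mathsf{U}_n}$, so that $\mathcal{G} \vDash U_n$ and $\mathcal{N}$ is bi-interpretable with $\mathcal{G}$.

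The second step is to invoke Proposition \ref{prop_lem_o_wezu} with these four structures. Since a bi-interpretation is in particular a retraction, $\mathcal{N}$ is a retract of $\mathcal{G}$. Combined with the given retraction $\mathcal{M} \rhd \mathcal{N} \rhd \mathcal{M}$ and the bi-interpretability of $\mathcal{M}$ with $\mathcal{K}$, this matches exactly the hypotheses of Proposition \ref{prop_lem_o_wezu}. The conclusion is that $\mathcal{K}$ is a retract of $\mathcal{G}$. Since $k \neq n$, this contradicts the retract-disjointness of $\{U_k\}_{k\in\omega}$ and completes the proof.

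I do not expect any genuine obstacle here: the whole argument is essentially a direct application of Proposition \ref{prop_lem_o_wezu} plus the observation that bi-interpretability implies mutual retraction. The only mildly delicate point is ensuring that the definability conditions carry through when one passes from theory-level to structure-level bi-interpretation, but this is automatic from how $\mathcal{K}$ and $\mathcal{G}$ are built. Notably, the ``moreover'' clause of Proposition \ref{prop_lem_o_wezu} is not needed for this corollary, although it is precisely that clause which makes the proposition nontrivial beyond the transitivity of the retract relation.
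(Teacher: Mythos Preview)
Your proposal is correct and matches the paper's own proof essentially line for line: argue by contrapositive, lift the theory-level bi-interpretations to structures $\mathcal{K}\vDash U_k$ and $\mathcal{G}\vDash U_n$, and invoke Proposition \ref{prop_lem_o_wezu}. Your observation that the ``moreover'' clause of Proposition \ref{prop_lem_o_wezu} is not needed here is also accurate.
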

\begin{proof}
Assume that $m \neq n$ and that $\MM \vDash V_m$ is a retract of $\NN \vDash V_n$. By the assumption on bi-interpretability between theories, there are $\KK \vDash U_m$ and
$\mathcal{G} \vDash U_n$ which are bi-interpretable with
$\MM$ and $\NN$, respectively. By Proposition
\ref{prop_lem_o_wezu}, $\KK$ is a retract 
of $\mathcal{G}$,
so the family $\{U_k\}_{k\in\omega}$ is not retract-disjoint.
\end{proof}

The fact expressed in the corollary below 
was first observed in \cite{enayat:variations}. Here we derive it using Proposition \ref{prop_lem_o_wezu}.

\begin{corollary}\label{cor_biint_pres_solid}
    If $U$ and $V$ are bi-interpretable theories and $U$ is solid, then $V$ is solid.
\end{corollary}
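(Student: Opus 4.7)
The plan is to argue the contrapositive: assume that $V$ is not solid, and derive that $U$ is not solid either. Let $\mathcal{M} \vDash V$ together with a retraction $(\mathsf{N}, \mathsf{M})$ in $\mathcal{M}$ witness the failure of solidity for $V$, so that $\mathcal{N} := \mathcal{M}^{\mathsf{N}} \vDash V$ but no $\mathcal{M}$-definable isomorphism between $\mathcal{M}$ and $\mathcal{N}$ exists. Let $\mathsf{K}$ and $\mathsf{V}$ denote the fixed translations witnessing the bi-interpretability of $U$ and $V$, and put $\mathcal{K} := \mathcal{M}^{\mathsf{K}}$ and $\mathcal{G} := \mathcal{N}^{\mathsf{K}} = \mathcal{M}^{\mathsf{N}\mathsf{K}}$, both models of $U$.

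Next I would apply Proposition \ref{prop_lem_o_wezu} with $\MM = \mathcal{M}$, $\NN = \mathcal{N}$, $\KK = \mathcal{K}$, and the fourth structure $\mathcal{G}$ as just defined. The three hypotheses hold by construction: $\mathcal{M}$ is a retract of $\mathcal{N}$ by assumption; $\mathcal{M}$ is bi-interpretable with $\mathcal{K}$ via the bi-interpretation of the theories; and $\mathcal{N}$ is a retract of $\mathcal{G}$, again via the same bi-interpretation (since bi-interpretability implies being a retract). The conclusion supplies a retraction $(\mathsf{G}_{\mathcal{K}}, \mathsf{K}_{\mathcal{G}})$ in $\mathcal{K}$ witnessing that $\mathcal{K}$ is a retract of $\mathcal{G}$, together with the crucial ``moreover'' point: the isomorphism between $\mathcal{M}^{\mathsf{N}\mathsf{K}} = \mathcal{G}$ and $\mathcal{M}^{\mathsf{K}\mathsf{G}_{\mathcal{K}}} = \mathcal{K}^{\mathsf{G}_{\mathcal{K}}}$ is $\mathcal{M}$-definable.

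Now I would invoke the assumed solidity of $U$ for the retraction $(\mathsf{G}_{\mathcal{K}}, \mathsf{K}_{\mathcal{G}})$. Since $\mathcal{K}^{\mathsf{G}_{\mathcal{K}}} \cong \mathcal{G} \vDash U$, solidity forces a $\mathcal{K}$-definable (hence $\mathcal{M}$-definable) isomorphism between $\mathcal{K}$ and $\mathcal{K}^{\mathsf{G}_{\mathcal{K}}}$. Composing this with the $\mathcal{M}$-definable isomorphism between $\mathcal{G}$ and $\mathcal{K}^{\mathsf{G}_{\mathcal{K}}}$ supplied by the proposition, I obtain an $\mathcal{M}$-definable isomorphism $\mathcal{K} \to \mathcal{G}$. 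Applying the $\mathsf{V}$-translation converts this into an $\mathcal{M}$-definable isomorphism $\mathcal{K}^{\mathsf{V}} \to \mathcal{G}^{\mathsf{V}}$, while the bi-interpretation of the theories provides an $\mathcal{M}$-definable isomorphism $\mathcal{M} \to \mathcal{K}^{\mathsf{V}}$ and an $\mathcal{N}$-definable (hence $\mathcal{M}$-definable) isomorphism $\mathcal{G}^{\mathsf{V}} \to \mathcal{N}$. Chaining these four isomorphisms yields an $\mathcal{M}$-definable isomorphism between $\mathcal{M}$ and $\mathcal{N}$, contradicting the choice of counterexample, so $V$ must have been solid after all.

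The main obstacle I anticipate is the definability bookkeeping rather than the overall strategy. All isomorphisms appearing in the argument have to be pushed back to being $\mathcal{M}$-definable in order to contradict the failure of solidity for $V$, and this is exactly the upgrade that the ``moreover'' clause of Proposition \ref{prop_lem_o_wezu} was designed to provide; without it, the bare retraction $\mathcal{K} \rhd \mathcal{G}$ would leave the final iso only definable in some intermediate structure and would not suffice to close the contradiction.
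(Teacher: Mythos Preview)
Your argument is correct and follows essentially the same route as the paper's own proof: apply Proposition~\ref{prop_lem_o_wezu} to pass from the $V$-retraction to a $U$-retraction between $\mathcal{K}$ and $\mathcal{G}$, use the solidity of $U$ together with the ``moreover'' clause to produce an $\mathcal{M}$-definable isomorphism $\mathcal{K}\to\mathcal{G}$, and then transport back via the bi-interpretation isomorphisms $\mathcal{M}\cong\mathcal{K}^{\mathsf{V}}$ and $\mathcal{N}\cong\mathcal{G}^{\mathsf{V}}$. The contrapositive framing is cosmetic; the paper states the same chain directly.
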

\begin{proof}
Let $U$ and $V$ be as above, and let $\mathsf{V}: U\rhd V$ and $\mathsf{U}: V\rhd U$ witness the bi-interpretability.
Assume that $\MM\vDash V$ and let $(\mathsf{N},\mathsf{M})$ be a retraction in $\MM$ such that $\MM^{\mathsf{N}} \vDash V$. Put $\NN = \MM^{\mathsf{N}}$. 

Applying Proposition \ref{prop_lem_o_wezu} to $\MM$, $\NN$, $\KK := \MM^{\mathsf{U}}$, and $\mathcal{G} := \NN^{\mathsf{U}}$,
we obtain a retraction $(\mathsf{G},\mathsf{K})$ witnessing that $\mathcal{K}$ is a retract of $\mathcal{G}$, as well as an $\MM$-definable isomorphism $\iota$ between $\MM^{\mathsf{N}\mathsf{U}}$ ($=\mathcal{G}$) and $\MM^{\mathsf{U}\mathsf{G}}$  ($=\KK^{\mathsf{G}}$) . 

By the solidity of $U$, there is a $\KK$-definable isomorphism between 
$\KK$ and $\KK^{\mathsf{G}}$, which
induces an $\MM$-definable isomorphism between 
$\MM^{\mathsf{UV}}$ ($=\KK^{\mathsf{V}}$) and $\KK^{\mathsf{G}\mathsf{V}}$. Composing this isomorphism with $\iota$, we obtain an $\MM$-definable isomorphism between $\MM^{\mathsf{UV}}$ and $\NN^{\mathsf{UV}}$ ($=\mathcal{G}^{\mathsf{V}}$).
However, $\MM^{\mathsf{U}\mathsf{V}}$ is $\MM$-definably isomorphic to $\MM$, and $\NN^{\mathsf{U}\mathsf{V}}$ is $\NN$-definably (hence, $\MM$-definably) isomorphic to $\NN$, so there is an $\MM$-definable
isomorphism between $\MM$ and $\NN$. 
\end{proof}

\subsection{A solid theory not interpreting PA}\label{subsec:solid-non-interpret}

The solid theories $T_n$ constructed in Section \ref{sec:proper-solid} are weak in the sense that they
do not prove $\PA$. However, they are relatively strong in the sense of interpretability, and in particular they clearly interpret $\PA$ -- for each $n$ the $\LPA$-part of $\gn{n}$ interprets $\PA$ in $\tn{n}$, and then an interpretation of $\PA$ in $T_n$ is obtained by an obvious case distinction. 

So, it is quite natural to ask whether there are solid subtheories of $\PA$ that are also strictly weaker than $\PA$ in terms of interpretability. This subsection contains a proof of the following theorem, which provides a positive answer to that question.

\begin{theorem}\label{main_thm_solid_nointerpret_PA}
For every $n\in\mathbb{N}$, there exists an r.e. solid subtheory of $\PA$ that contains $\IS{n}$ but not $\BS{n+1}$ and that does not interpret $\PA$.
\end{theorem}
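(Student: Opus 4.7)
The plan is to extend the construction of Theorem \ref{thm:proper-solid} via the modular framework of Section \ref{subsec:modularizing}, replacing the binary disjunction between $\PA$ and $\tn{n}$ (whose $\PA$ branch is precisely what interprets $\PA$) by an infinite family of branches, none of which interprets $\PA$. Specifically, I aim to build
\[
T \;:=\; V \;\cup\; \bigoplus_{k \in \omega} (U_k \mid \varphi_k),
\]
for a weak base $V \supseteq \IS{n} + \exp$, pairwise inconsistent sentences $\varphi_k$, and solid theories $U_k$ that do not interpret $\PA$. Provided the family $\{V \cup \{\neg \varphi_k : k \in \omega\}\} \cup \{U_k\}_{k \in \omega}$ is retract-disjoint and $V \cup \{\neg \varphi_k : k\in\omega\}$ is solid, Proposition \ref{pres_solid_disj_sum} gives solidity of $T$. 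Non-interpretation of $\PA$ by $T$ is then automatic: any putative $\mathsf{I}:T \rhd \PA$ would, via $T + \varphi_k \supseteq V + U_k$, yield $V + U_k \rhd \PA$ for each $k$, and, via $T + \{\neg \varphi_k : k\in\omega\}$, yield $V \cup \{\neg \varphi_k : k\in\omega\} \rhd \PA$, both of which I arrange to be forbidden.

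For the building blocks $U_k$, I adapt the $\tn{n}$ construction by substituting for $\CT[\PA]$ a theory of $k$ nested compositional truth predicates over $\IS{n+1}$ rather than $\PA$---call it $\CT^k[\IS{n+1}]$, defined by analogy with Definition \ref{defn:ct-n} but with induction in the expanded language calibrated to keep the consistency strength strictly below $\PA$. Each $U_k$ is then axiomatized by the $\tn{n}$ recipe: take a $\Sigma_1$-flexible formula $\xi_{n,k}$ over $\IS{n+1}$, Henkin-construct a model of $\IS{n+1}$ augmented with $k$ compositional truth predicates, pass to its pointwise $\Sigma_{n+1}$-definable submodel $\KK_k$, and axiomatize via analogs of axioms (i)--(v) of $\tn{n}$ so that $U_k$ is bi-interpretable with $\CT^k[\IS{n+1}]$ through interpretations analogous to $\gn{n}$ and $\hn{n}$. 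An analog of Proposition \ref{prop:biint_truth} delivers the bi-interpretation. Solidity of $\CT^k[\IS{n+1}]$ is established by the argument of Corollary \ref{cor:solid_ctn}, whose central Lemma \ref{lem:indfact} was formulated precisely for this kind of nested-truth setup; Corollary \ref{cor_biint_pres_solid} then transfers solidity to $U_k$. Retract-disjointness of $\{\CT^k[\IS{n+1}]\}_k$ follows from an adaptation of Lemma \ref{lem:truthin}---whose proof used only Tarski's undefinability applied between the nested truth levels, not any feature specific to $\PA$---and Corollary \ref{cor_biint_pres_retract} transfers this to $\{U_k\}_k$. Non-interpretation of $\PA$ by $\CT^k[\IS{n+1}]$ (and hence by $U_k$) comes from $\PA \vdash \Con{\CT^k[\IS{n+1}]}$ (secured by the calibration of induction) combined with G\"odel's second theorem, with Corollary \ref{cor_downward_prec_pi1} available to handle interpretations of low complexity. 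Finally, take $V = \IS{n} + \exp$ together with axioms forcing the pairwise inconsistency of the $\varphi_k$'s, let $\varphi_k$ assert that the designated flexible formula attains value $k$, and arrange that $V \cup \{\neg \varphi_k : k\in\omega\}$ is still consistent with $\neg\BS{n+1}$ and does not interpret $\PA$ (for instance by making it interpretable in $\IS{n+1}$ alone).

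The main obstacle lies in the delicate calibration of induction in $\CT^k[\IS{n+1}]$ and the associated $U_k$'s. The solidity argument for $\CT^m[\PA]$ crucially exploits a \emph{shortest} definable cut in the ambient pointwise-definable model---and such a shortest cut of a model of $\IS{n+1}$ typically satisfies full $\PA$, which would immediately give $U_k \rhd \PA$ via that cut and defeat the whole point. Hence one must either work with interpretations whose domain is not the shortest definable cut (and rebuild the formalized categoricity argument of Lemma \ref{lem:indfact} accordingly), or else restrict the induction in $\CT^k[\IS{n+1}]$ and the bi-interpretation axioms tightly enough that the resulting ``target'' arithmetical structure provably does not reach $\PA$'s interpretability strength, yet still supports the Henkin construction and the shortest-cut argument required for solidity. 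Navigating this tension---and verifying that the bi-interpretability and retract-disjointness lemmas indeed go through under the weakened induction---is the technical heart of the proof.
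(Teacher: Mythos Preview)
Your proposal has a genuine gap, and it is precisely the obstacle you flag at the end. The paper's route avoids this obstacle entirely by a different mechanism, which you have missed.

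The crux is this: you want each branch $U_k$ not to interpret $\PA$, and you also want $V\cup\{\neg\varphi_k:k\in\omega\}$ to be solid yet not interpret $\PA$. But the solidity machinery you invoke (Lemma \ref{lem:indfact}, Corollary \ref{cor:solid_ctn}) relies on the interpreted truth structure living on the \emph{shortest definable cut}, and any shortest definable cut in a model of even $\PAm$ satisfies full induction in $\LPA$, hence $\PA$. So as soon as your $U_k$ is solid for that reason, it interprets $\PA$ via its $\gn{}$-style interpretation, defeating your non-interpretation strategy. Calibrating the induction in ``$\CT^k[\IS{n+1}]$'' does not help: with full extended-language induction you recover $\PA$ on the arithmetic, and with restricted induction the shortest-cut argument collapses. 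Similarly, asking for $V\cup\{\neg\varphi_k\}$ to be solid and not interpret $\PA$ essentially presupposes what you are trying to prove.

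The paper's solution is to \emph{accept} that each branch $\tnd{k}$ interprets $\PA$ (indeed, each is bi-interpretable with an extension of $\CT[\PA]$), and to block interpretation of $\PA$ in the full theory by a complexity argument instead. The case-splitting sentences are $\varphi_k={\IS{k}}\wedge{\neg\IS{k+1}}$, so that $V\cup\{\neg\varphi_k:k\in\omega\}$ is simply $\PA$ (solid, and trivially interprets $\PA$, but that is harmless). The point is Corollary \ref{cor_tn_nonresnintepret_pa}: there is no $\Sigma_k$-\emph{restricted} interpretation of $\PA$ in $\tn{k}$ (and likewise in $\tnd{k}$), because $\tnd{k}\vdash\exists x\,\xi_k(x)$ and Corollary \ref{cor_downward_prec_pi1} would push this false $\Sigma_1$ sentence into any $\Sigma_k$-interpreted model of $\PA$. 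Any putative interpretation $\mathsf{I}:TD_n\rhd\PA$ is $\Sigma_k$-restricted for some fixed $k$; restricting to the consistent branch $TD_n+{\IS{k}}\wedge{\neg\IS{k+1}}=\tnd{k}$ then contradicts the corollary. Retract-disjointness of $\{\tnd{k}\}$ is obtained not via varying truth depth but by tagging each $\tnd{k}$ with an extra flexible-formula axiom $(\forall x\,(\zeta(x)\leftrightarrow x=\num{k}))^{\gn{k}}$, so that the bi-interpretable partners $\CT[\PA]+\forall x\,(\zeta(x)\leftrightarrow x=\num{k})$ are pairwise retract-disjoint by solidity of $\CT[\PA]$.
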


Let $\tn{n}$ be defined as in Section \ref{sec:proper-solid}. We observe that 

\begin{lemma}\label{cor_tn_nonresninterpret_pa}
For each $n\geq 1$, there is no $\Sigma_{n}$-restricted interpretation of $\PA$ in $\tn{n}$.
\end{lemma}
\begin{proof}
    Assume that $\mathsf{M}$ is a $\Sigma_n$-restricted interpretation of $\EA$ in $\tn{n}$. 
    By definition, $\tn{n}$ contains
    $\IS{n}$ and proves $\exists x\,\xi_n(x)$ for the the $\Sigma_1$-flexible formula
    $\xi_n$. So, by Corollary \ref{cor_downward_prec_pi1}, we have $\tn{n}\vdash (\exists x \, \xi_n(x))^{\mathsf{M}}$. But $\PA\vdash \Con{\IS{n}}$, whereas ${\EA} + {\exists x \, \xi_n(x)} \vdash \neg\Con{\IS{n}}$ by Corollary 
    \ref{cor:flexible-con}. Hence, $\mathsf{M}$ is not an interpretation of $\PA$.
\end{proof}
Now, the intuition is that the theory we are looking for is roughly the following one (``p'' stands for ``proto-''):
\[pT_n:=\IS{n}+\exp+\bigoplus_{k\geq n}(\tn{k}|{\IS{k}}\land{\lnot\IS{k+1}}).\]
\begin{lemma}\label{lem_noninterpret_PA}
    The theory $pT_n$ does not interpret $\PA$.
\end{lemma}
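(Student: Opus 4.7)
The plan is to exploit the fact that any interpretation $\mathsf{M}$ of $\PA$ in $\mathrm{pT}_n$ consists of finitely many first-order formulas, so it has some fixed bounded quantifier complexity, whereas the axiomatization of $\mathrm{pT}_n$ contains ``branches'' $\tn{k}$ for arbitrarily large $k$. Pushing $k$ high enough will drop the problem into the scope of Corollary \ref{cor_tn_nonresnintepret_pa}.

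Concretely, I will suppose for contradiction that $\mathsf{M}$ is an interpretation of $\PA$ in $\mathrm{pT}_n$ and fix $m \geq 1$ such that $\mathsf{M}$ is $\Sigma_m$-restricted. Setting $k := \max(n, m)$, I then verify that $\tn{k}$ extends $\mathrm{pT}_n$. The base part $\IS{n} + \exp$ is immediate from $\tn{k} \supseteq \IS{k} + \exp$ together with $k \geq n$. For each conditional axiom $(\IS{\ell} \wedge \neg \IS{\ell+1}) \rightarrow \psi$ with $\ell \geq n$ and $\psi \in \tn{\ell}$, a short case split on the position of $\ell$ relative to $k$ does the job: when $\ell < k$, the axiom $\IS{k}$ of $\tn{k}$ refutes $\neg \IS{\ell+1}$; when $\ell > k$, the axiom $\neg \BS{k+1}$ refutes $\IS{\ell}$ via $\IS{\ell} \vdash \IS{k+1} \vdash \BS{k+1}$; and when $\ell = k$, the consequent $\psi$ is already an axiom of $\tn{k}$.

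Because $\tn{k}$ extends $\mathrm{pT}_n$, the very same formulas witness that $\mathsf{M}$ is an interpretation of $\PA$ in $\tn{k}$, and since $k \geq m$ that interpretation is $\Sigma_k$-restricted. This directly contradicts Corollary \ref{cor_tn_nonresnintepret_pa}.

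The argument is essentially an index-bookkeeping trick on top of Corollary \ref{cor_tn_nonresnintepret_pa}, and I do not expect a serious obstacle. The only mildly conceptual ingredient is the boundedness of the quantifier complexity of a fixed interpretation, which is immediate from the definition of a translation as a finite list of formulas.
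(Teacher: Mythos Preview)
Your proposal is correct and follows essentially the same route as the paper: fix the complexity bound of the putative interpretation, pick $k$ large enough, observe that $\tn{k}$ extends $\mathrm{pT}_n$ (equivalently, that $\mathrm{pT}_n + \IS{k} + \neg\IS{k+1}$ coincides with $\tn{k}$), and invoke Corollary~\ref{cor_tn_nonresnintepret_pa}. Your case analysis verifying that $\tn{k}$ proves each conditional axiom of $\mathrm{pT}_n$ is a bit more explicit than the paper's one-line remark, but the substance is identical.
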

\begin{proof}
    Suppose that $\mathsf{M}$ is an interpretation of $\PA$ in $pT_n$. Let $k \geq n$ be such that $\mathsf{M}$ is $\Sigma_k$-restricted. Then $\mathsf{M}$ is also a $\Sigma_k$-restricted interpretation of $\PA$ in the consistent theory $pT_n+{\IS{k}}+{\neg \IS{k+1}}$. However, the latter theory coincides with $\tn{k}$, which contradicts Lemma \ref{cor_tn_nonresninterpret_pa}.
\end{proof}

The problem with $pT_n$ is that the family $\{\tn{k}\}_{k\in\omega}$ is not retract-disjoint. 
As a result, there are models $\MM, \NN\vDash pT_n$ such that $\MM$ is a retract of $\NN$ but the two structures satisfy $pT_n$ ``for different reasons'' and thus cannot
be isomorphic. 
For example, consider the models of $\tn{2}$ and $\tn{3}$,
respectively, obtained by applying the interpretations $\hn{2}$ and $\hn{3}$ from Section \ref{sec:proper-solid}
in $(\N, \Th{\N})$. These models are not even elementarily equivalent, but they are both bi-interpretable
with $(\N, \Th{\N})$, so they are not only retracts
but in fact bi-interpretable with one another.

We will now show how to improve the theories $\tn{n}$ 
so as to eliminate such patterns. Below we use the same interpretations $\hn{n}$ and $\gn{n}$ as before, in Section \ref{sec:proper-solid}.

Let $\zeta(x)$ be a $\Sigma_{1}$-flexible formula over the arithmetical consequences of $\CT[\PA]$. Since, by Proposition \ref{prop:biint_truth}, $\mathsf{N}_n$ is part of a pair of interpretations witnessing the bi-interpretability of $\tn{n}$ and $\CT[\PA]$, it follows that $\zeta$ is $\Sigma_{1}$-flexible over
\[\{\sigma\in\LPA\colon\, \tn{n}\vdash\sigma^{\mathsf{N}_n}\}.\] 
Let $\tnd{n}:= \tn{n}+(\forall x\,(\zeta(x)\leftrightarrow x=\num{n}))^{\mathsf{N}_n}$ (here ``D'' stands for ``disjoint''). By flexibility of $\zeta$, the theory $\tnd{n}$ is consistent for each $n$. 
By Proposition \ref{prop:biint_truth}, we obtain:
\begin{lemma}\label{lem_biint_tnd_truth}
    For each $n\in\omega$, $(\gn{n}, \hn{n})$ is a bi-interpretation of 
    $\tnd{n}$ with 
    $\CT[\PA]+\forall x\, (\zeta(x)\leftrightarrow x=n)$.
\end{lemma}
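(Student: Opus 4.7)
The plan is to show that the very same pair of interpretations $\gn{n}$ and $\hn{n}$ that witnesses the bi-interpretability of $\tn{n}$ and $\CT[\PA]$ in Proposition \ref{prop:biint_truth} also witnesses the bi-interpretability of $\tnd{n}$ and $\CT[\PA] + \forall x\,(\zeta(x) \leftrightarrow x = \num{n})$. The isomorphisms witnessing that the relevant compositions are isomorphic to the identity will be $i_n$ and $j_n$, again as in Proposition \ref{prop:biint_truth}.

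First, I would check that $\gn{n}$ is an interpretation of $\CT[\PA] + \forall x\,(\zeta(x) \leftrightarrow x = \num{n})$ in $\tnd{n}$. Since $\tnd{n} \supseteq \tn{n}$, the axiom \ref{it:t0n-3} of $\tn{n}$ already ensures $\gn{n} \vDash \CT[\PA]$, and the extra axiom of $\tnd{n}$ is \emph{by construction} precisely $(\forall x\,(\zeta(x) \leftrightarrow x = \num{n}))^{\gn{n}}$, so nothing is left to verify.

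The second and only slightly nontrivial step is to show that $\hn{n}$ interprets $\tnd{n}$ in $\CT[\PA] + \forall x\,(\zeta(x) \leftrightarrow x = \num{n})$. By Proposition \ref{prop:biint_truth}, $\hn{n}$ is already an interpretation of $\tn{n}$ in $\CT[\PA]$, so it suffices to verify the extra axiom. Fix $\mathcal{M} \vDash \CT[\PA] + \forall x\,(\zeta(x) \leftrightarrow x = \num{n})$; we must show $\mathcal{M}^{\hn{n}\!\gn{n}} \vDash \forall x\,(\zeta(x) \leftrightarrow x = \num{n})$. But by Proposition \ref{prop:biint_truth}, $j_n$ provides an $\mathcal{M}$-definable isomorphism between $\mathcal{M}$ and $\mathcal{M}^{\hn{n}\!\gn{n}}$, and since isomorphisms preserve every first-order formula (and send the value of the numeral $\num{n}$ to itself), the property $\forall x\,(\zeta(x) \leftrightarrow x = \num{n})$ transfers from $\mathcal{M}$ to $\mathcal{M}^{\hn{n}\!\gn{n}}$, as required.

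Finally, the isomorphism conditions for the bi-interpretation, namely $\tn{n} \vdash \text{``}i_n \colon \id \to \gn{n}\hn{n}\text{''}$ and $\CT[\PA] \vdash \text{``}j_n \colon \id \to \hn{n}\gn{n}\text{''}$, are already among (or provable from) the axioms of the weaker theories, so they remain valid in the strengthenings $\tnd{n}$ and $\CT[\PA] + \forall x\,(\zeta(x) \leftrightarrow x = \num{n})$. There is no genuine obstacle in this argument; the main thing to be careful about is that the flexible formula $\zeta$ is invariant along $j_n$, which is automatic because $j_n$ is a definable isomorphism of $\mathcal{L}_\PA$- (and then $\mathcal{L}_\PA \cup \{P\}$-) structures.
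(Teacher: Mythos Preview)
Your proposal is correct and follows essentially the same approach as the paper: reuse the interpretations $\gn{n}$ and $\hn{n}$ and the isomorphisms $i_n$, $j_n$ from Proposition \ref{prop:biint_truth}, noting that the extra axiom of $\tnd{n}$ transfers along $j_n$ because $\zeta$ is an $\LPA$-formula. The paper's own proof is terser, but your expanded argument is exactly what is intended.
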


Since bi-interpretability preserves solidity (Corollary \ref{cor_biint_pres_solid}), and
since each extension of $\CT[\PA]$ in the same language is solid (a consequence of Corollary \ref{cor:solid_ctn}), we obtain:

\begin{corollary}\label{cor_tnd_solid}
    For each $n$, the theory $\tnd{n}$ is solid.
\end{corollary}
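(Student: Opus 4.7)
The plan is to leverage the bi-interpretability between $\tnd{n}$ and $\CT[\PA]$ extended by an axiom pinning down $\zeta$, together with the preservation of solidity under bi-interpretability. Specifically, let $S_n := \CT[\PA] + \forall x\,(\zeta(x) \leftrightarrow x = \num{n})$, which is an extension of $\CT[\PA]$ in the same language $\mathcal{L}_{\PA} \cup \{P\}$, since $\zeta$ is an $\LPA$-formula. Lemma \ref{lem_biint_tnd_truth} establishes that $\tnd{n}$ and $S_n$ are bi-interpretable, and Corollary \ref{cor_biint_pres_solid} reduces the claim to showing that $S_n$ itself is solid.

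For the solidity of $S_n$, I would directly imitate the proof of Corollary \ref{cor:solid_ctn}. Given a retraction witness $\MM_1 \rhd \MM_2 \rhd \MM_3$ with each $\MM_i \vDash S_n$ and an $\MM_1$-definable isomorphism between $\MM_1$ and $\MM_3$, apply Lemma \ref{lem:indfact} with $m = 1$ and $\mathsf{N}_i := \mathrm{id}_{\MM_i}$. The crucial hypothesis to verify is that the domain of each $\mathsf{N}_i$ is the shortest definable cut in $\MM_i$; but since $\MM_i$ satisfies the full induction scheme for $\mathcal{L}_{\PA} \cup \{P\}$ (as an extension of $\CT[\PA]$), every definable cut of $\MM_i$ must equal the whole of $\MM_i$. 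The additional axiom fixing $\zeta$ plays no role in this verification. The hypothesis on the $\MM_1$-definable isomorphism between $\MM_1^{\mathsf{N}_1}$ and $\MM_3^{\mathsf{N}_3}$ is precisely the retraction assumption. Lemma \ref{lem:indfact} then yields the required $\MM_1$-definable isomorphism between $\MM_1$ and $\MM_2$, which is what solidity demands.

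There is no serious obstacle to this argument; all the heavy lifting has already been done in proving the bi-interpretation (Lemma \ref{lem_biint_tnd_truth}) and in establishing the general solidity-transfer result (Corollary \ref{cor_biint_pres_solid}). The corollary is essentially a formal consequence of what precedes it, observing only that the proof of Corollary \ref{cor:solid_ctn} generalises verbatim from $\CT[\PA]$ to any of its extensions in the same language.
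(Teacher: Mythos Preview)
Your proposal is correct and follows essentially the same approach as the paper: the paper's justification (given in the sentence immediately preceding the corollary) is precisely that bi-interpretability preserves solidity and that every same-language extension of $\CT[\PA]$ is solid by the proof of Corollary~\ref{cor:solid_ctn}. You have simply spelled out in more detail why that last claim holds, by unpacking the application of Lemma~\ref{lem:indfact} with $\mathsf{N}_i = \mathrm{id}_{\MM_i}$.
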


One last missing piece is the retract-disjointness of
the theories $\tnd{n}$.

\begin{corollary}\label{lem_retract_disj}
The family $\{\PA\}\cup\{\tnd{n}\}_{n\in\omega}$ is retract-disjoint.
\end{corollary}
\begin{proof}
    By Lemma \ref{lem:no-mix-ct-pa}, Corollary \ref{cor_biint_pres_retract} and Lemma \ref{lem_biint_tnd_truth}, it is enough to check that the family $\{V_n\}_{n\in\omega}$ defined by
    \[V_n:= \CT[\PA] + \forall x\, (\zeta(x)\leftrightarrow x=\num{n})\]
    is retract-disjoint. However, this easily follows from the solidity of $\CT[\PA]$.
\end{proof}

Finally, we define $TD_{n}$ to be
\[\IS{n}+\exp+\bigoplus_{k \ge n}(\tnd{k}|{\IS{k}} \wedge {\neg\IS{k+1}}).\]

We observe that the axioms of $\tnd{k}$ can be effectively generated given $k$, so $TD_n$ really is an r.e.~subtheory of $\PA$. By repeating the argument from the proof of Lemma \ref{lem_noninterpret_PA} (which requires using
the obvious variant of Corollary \ref{cor_tn_nonresninterpret_pa} for $\tnd{k}$ instead
of $\tn{k}$), we obtain: 

 \begin{corollary}\label{cor:tdn-not-interpret-pa}
     The theory $TD_n$ does not interpret $\PA$.
 \end{corollary}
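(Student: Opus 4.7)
The plan is to imitate the proof of Lemma \ref{lem_noninterpret_PA}, with the theories $\tnd{k}$ replacing the $\tn{k}$. Suppose for contradiction that $\mathsf{M}$ is an interpretation of $\PA$ in $TD_n$. Since $\mathsf{M}$ is specified by finitely many arithmetical formulas, there exists $k \geq n$ such that $\mathsf{M}$ is $\Sigma_k$-restricted.

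The first step is to observe that the deductive closure of $TD_n \cup \{\IS{k}, \neg\IS{k+1}\}$ is exactly $\tnd{k}$. Starting from
\[TD_n = \IS{n} + \exp + \bigoplus_{\ell \ge n}\bigl(\tnd{\ell} \,\big|\, \IS{\ell} \wedge \neg\IS{\ell+1}\bigr),\]
adding $\IS{k} \wedge \neg\IS{k+1}$ activates the disjunct indexed by $\ell = k$ and trivializes all others (their guards are mutually exclusive). Conversely, $\tnd{k}$ extends $\tn{k}$, which already implies $\IS{k}$ and $\neg\IS{k+1}$ (the latter because $\tn{k}$ contains $\neg\BS{k+1}$, and $\BS{k+1}$ follows from $\IS{k+1}$). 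This theory is consistent by Corollary \ref{cor_tnd_solid}, so $\mathsf{M}$ becomes a $\Sigma_k$-restricted interpretation of $\PA$ in $\tnd{k}$.

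The second and main step is to establish the variant of Corollary \ref{cor_tn_nonresnintepret_pa} for $\tnd{k}$: there is no $\Sigma_k$-restricted interpretation of $\PA$ in $\tnd{k}$. Because $\tnd{k}$ extends $\tn{k}$, it inherits the two facts used in the original proof, namely $\tnd{k} \vdash \IS{k}$ and $\tnd{k} \vdash \exists x\, \xi_k(x)$. Applying Corollary \ref{cor_downward_prec_pi1} to the $\Pi_1$ sentence $\forall x\, \neg \xi_k(x)$ yields $\tnd{k} \vdash (\exists x\, \xi_k(x))^{\mathsf{M}}$. On the other hand, since $\PA \vdash \Con{\IS{k+1}}$, hypothetical $\mathsf{M}$ would give $\tnd{k} \vdash \Con{\IS{k+1}}^{\mathsf{M}}$, and Corollary \ref{cor:flexible-con} (formalized inside the interpretation) then forces $\tnd{k} \vdash (\forall x\, \neg\xi_k(x))^{\mathsf{M}}$, a contradiction.

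No step presents a real obstacle; the mildly delicate point is the bookkeeping needed to identify $TD_n + \IS{k} + \neg\IS{k+1}$ with $\tnd{k}$, and once this is done the rest is a direct transplant of the already-completed argument from Corollary \ref{cor_tn_nonresnintepret_pa}.
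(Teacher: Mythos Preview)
Your proof is correct and follows essentially the same approach as the paper, which explicitly says to repeat the argument from Lemma \ref{lem_noninterpret_PA} using the obvious variant of Corollary \ref{cor_tn_nonresnintepret_pa} for $\tnd{k}$; you have simply unpacked those two steps. One small slip: the consistency of $\tnd{k}$ is not a consequence of Corollary \ref{cor_tnd_solid} (solidity) but is stated just before Lemma \ref{lem_biint_tnd_truth} as following from the flexibility of $\zeta$.
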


\begin{proof}[Proof of Theorem \ref{main_thm_solid_nointerpret_PA}]
We have already observed that $TD_n$ is an r.e.~subtheory
of $\PA$, and in Corollary \ref{cor:tdn-not-interpret-pa}
we have shown that it does not interpret $\PA$.
Clearly, $TD_n$ contains $\IS{n}$ by definition.
It is also consistent with $\neg\BS{n+1}$, because
$\tnd{n}$ is consistent and implies $\neg\BS{n+1}$.
So, it remains to show that $TD_n$ is solid.

To this end, we want to invoke Proposition \ref{pres_solid_disj_sum} with $U_k := \tnd{k}$, 
$V:= \IS{n}+\exp$,
and $\varphi_k:= {\IS{k}} \wedge {\neg\IS{k+1}}$
(where in $\varphi_0$ we use a finite
fragment of $\mathrm{I}\Delta_0$ that is
equivalent to $\mathrm{I}\Delta_0$ assuming $\exp$).
By Corollary \ref{cor_tnd_solid},
each theory $U_k$ is solid.
The sentences $\varphi_k$ are obviously pairwise inconsistent. It is easy to see that 
$V \cup \{\neg\varphi_k\colon\, k\in\omega\}$ is simply $\PA$, hence it is also a solid theory. The last thing we need to verify is that
the family consisting of PA
 and of $\tnd{0},\tnd{1},\ldots$ is retract-disjoint. This was established in Lemma \ref{lem_retract_disj}.
\end{proof}

\begin{remark}
Note that each model of the theory $TD_n$ actually interprets a model of $\PA$. Intuitively, the reason why these interpretations cannot be merged into a single interpretation of $\PA$ in $TD_n$ is that $TD_n$ is defined by an ``infinite case distinction'', and a finite formula is unable to decide which of the cases applies. We return to this topic in the final section of the paper.  
\end{remark}

\subsection{A solid theory infinitely below PA}\label{subsec:Ali's question}

This section improves on our main result from Section \ref{sec:proper-solid} by providing an example of 
a solid subtheory of $\PA$ which does not imply $\PA$ even after being strengthened by an arbitrary true sentence, 
or in fact by all true $\Pi_k$ sentences where $k$ is an arbitrary fixed number\footnote{We are grateful to Ali Enayat for asking the question which led to this improvement.}.

\begin{theorem}\label{main_thm_solid_inf-below_PA}
    For each $n \in \omega$, there is a solid r.e.~subtheory $TF_n$ of $\PA$ such that $TF_n\vdash \IS{n} + \exp$,
    but for each $k$ it holds that $TF_n + \Thr{\Pi_k}{\mathbb{N}}\nvdash \PA$. 
\end{theorem}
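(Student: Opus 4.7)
The construction of $TF_n$ will follow the template of $TD_n$ from Section \ref{subsec:solid-non-interpret}, but with the $\Sigma_1$-flexible formula driving each $\tn{m}$ replaced by a formula whose flexibility is calibrated to the level $m$. Concretely, for each $m \geq \max(n,3)$ we use Theorem \ref{tw_flex_main}\ref{it:tw_flex_main-2} to fix a $\Sigma_m$ formula $\xi_m$ that is $(\Sigma_{m-2}, \Sigma_1)$-flexible over $\IS{m+1}$, rebuild the interpretations $\hn{m}, \gn{m}$ and the axiom system of $\tn{m}$ using this $\xi_m$, and add a $\Sigma_1$-flexible diagonal formula $\zeta$ exactly as in Section \ref{subsec:solid-non-interpret} to obtain theories $\tnf{m}$. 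We then set
\[TF_n := {\IS{n}} + {\exp} + \bigoplus_{m \geq n}\bigl(\tnf{m} \mid {\IS{m}} \wedge {\neg \IS{m+1}}\bigr).\]

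Solidity of $TF_n$ will be verified by mimicking the proof of Theorem \ref{main_thm_solid_nointerpret_PA}. For each standard $m$, the theory $\tnf{m}$ is bi-interpretable with $\CT[\PA] + \forall x (\zeta(x) \leftrightarrow x = \num{m})$: the only extra ingredient over Proposition \ref{prop:biint_truth} is an internal consistency proof for $\IS{m+1} + \xi_m\text{-constraints}$ inside $\CT[\PA]$, which is available for fixed standard $m$ because $\CT[\PA]$ proves $\Sigma_\ell\mhyphen\rfn(\EA)$ for every standard $\ell$ and the moreover-clause of Theorem \ref{tw_flex_main}\ref{it:tw_flex_main-2} converts such reflection into the required consistency statement. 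Corollaries \ref{cor_biint_pres_solid} and \ref{cor_biint_pres_retract} then deliver solidity of each $\tnf{m}$ and retract-disjointness of $\{\tnf{m}\}_{m \geq n}$, while Lemma \ref{lem:no-mix-ct-pa} together with Corollary \ref{cor_biint_pres_retract} rules out retracts between $\PA$ and any $\tnf{m}$. Proposition \ref{pres_solid_disj_sum}, applied with $V := {\IS{n}} + {\exp}$ (so that $V \cup \{\neg(\IS{m} \wedge \neg \IS{m+1}) : m \geq n\}$ collapses to $\PA$), concludes solidity of $TF_n$.

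It then remains to show that for each $k$, $TF_n + \Thr{\Pi_k}{\mathbb{N}} \nvdash \PA$. Fix $k$ and set $m := \max(n, k+3)$; it suffices to exhibit a model of $\tnf{m} + \Thr{\Pi_k}{\mathbb{N}} + \neg \PA$, since such a model satisfies $TF_n$ via its $m$-th disjunct while failing $\IS{m+1}$. Using the $(\Sigma_{m-2}, \Sigma_1)$-flexibility of $\xi_m$ together with compactness, build a model $\NN^\star$ of
\[{\IS{m+1}} + \{\xi_m(\num{\ell}) : \ell \in \Th(\N)\} + \{\neg \xi_m(\num{\ell}) : \ell \notin \Th(\N)\} + \Thr{\Pi_{m-2}}{\mathbb{N}};\]
each finite subset is realised in some $\Sigma_{m-2}$-elementary extension of $\N$ produced by one application of flexibility to a $\Sigma_1$ formula enumerating the positive instances of $\xi_m$. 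Inside an appropriate $\CT[\PA]$-expansion of $\NN^\star$, run the formalized leftmost-branch Henkin construction from Proposition \ref{prop:biint_truth} on the internal version of this theory to obtain a Henkin model $\HH$, and set $\KK := \KK_{m+1}(\HH)$. The resulting $\KK$ is a model of $\tnf{m}$ by the verification from Proposition \ref{prop:biint_truth} adapted to the stronger $\xi_m$, and since $\KK \preccurlyeq_{m+1} \HH \vDash \Thr{\Pi_k}{\mathbb{N}}$ and $k \leq m-2 < m+1$, it also satisfies $\Thr{\Pi_k}{\mathbb{N}}$.

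The main obstacle is the last step: feeding $\Thr{\Pi_k}{\mathbb{N}}$ into an internal Henkin construction while keeping it compatible with the axioms of $\tnf{m}$. Concretely, one must show that inside the $\CT[\PA]$-expansion of $\NN^\star$ the enriched theory ${\IS{m+1}} + \xi_m\text{-constraints} + \Thr{\Pi_k}{\mathbb{N}}$ is internally consistent, i.e., a formalised replay of the external compactness argument using the moreover-clauses of Theorem \ref{tw_flex_main}\ref{it:tw_flex_main-2} for the $\xi_m$-part and the internal $\Pi_k$-truth predicate for the reflection part; and then that the resulting Henkin model still satisfies the bi-interpretation axioms and the $\zeta$-axiom of $\tnf{m}$, along the lines of Proposition \ref{prop:biint_truth}.
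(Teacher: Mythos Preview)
Your solidity argument is fine, but the ``infinitely below $\PA$'' part breaks, and the reason is precisely the one the paper flags at the beginning of Section \ref{subsec:Ali's question}: the $\Sigma_1$-flexible diagonal $\zeta$ from Section \ref{subsec:solid-non-interpret} forces a false $\Sigma_1$ sentence into every $\tnf{m}$. Concretely, the axiom $(\forall x\,(\zeta(x)\leftrightarrow x=\num{m}))^{\gn{m}}$ makes $\zeta(\num{m})$ hold in the smallest definable cut; since $\zeta$ is $\Sigma_1$ and the cut is an initial segment, this persists upward, so $\tnf{m}\vdash \zeta(\num{m})$. But by Corollary \ref{cor:flexible-con}, $\EA\vdash \Con{W}\rightarrow\forall x\,\neg\zeta(x)$, and $\Con{W}$ is a true $\Pi_1$ sentence. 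Thus each $\tnf{m}$ is already inconsistent with $\Thr{\Pi_1}{\N}$, and your $TF_n+\Thr{\Pi_1}{\N}$ proves $\neg({\IS{m}}\wedge{\neg\IS{m+1}})$ for every $m\ge n$, hence proves $\PA$. In your model construction this shows up directly: the cut of your $\KK$ is (isomorphic to) $\NN^\star$, which by design satisfies $\Thr{\Pi_{m-2}}{\N}\supseteq \Thr{\Pi_1}{\N}$ and therefore $\forall x\,\neg\zeta(x)$, so $\KK$ cannot satisfy the $\zeta$-axiom of $\tnf{m}$.

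The paper avoids this by dropping the $\zeta$-mechanism entirely and instead making $\tnf{n}$ bi-interpretable with $\CT^n[\PA]$ (using the top predicate $P_n$ in place of $P_1$). Retract-disjointness of $\{\PA\}\cup\{\tnf{n}\}_n$ then comes for free from Lemma \ref{lem:truthin} via Corollary \ref{cor_biint_pres_retract}, with no need to plant any distinguishing arithmetical sentence. Since the intended model of $\CT^n[\PA]$ has arithmetic part $\N$, running the $\hn{n}'$-construction there yields a model of $\tnf{n}$ whose cut is the genuine $\N$, automatically compatible with $\Thr{\Pi_{n-1}}{\N}$. Your upgraded $\xi_m$ handles the $\xi$-side of the obstruction correctly, but you also need a retract-disjointness device that does not rely on a false $\Sigma_1$ commitment; the iterated-truth hierarchy is exactly such a device.
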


One of the motivations for this theorem is a
result of Wilkie's that we will now explain. An $\LPA$ scheme template is an $\LPA$-formula $\theta(P)$ with a marked fresh predicate letter $P$. For a first-order formula $\psi(x)$, the notation $\theta[\psi/P]$ 
stands for the formula obtained by replacing each occurrence of a subformula $P(t)$ with $\psi(t)$ (preceded by renaming the bound variables if necessary). The scheme generated by the template $\theta(P)$,
denoted by $\theta[\LPA]$, 
is the set $\{\theta[\psi/P]\colon\, \psi(x)\in \LPA\}$.
A scheme template $\theta(P)$ is \emph{restricted} if it is of the form
\[Q_0x_0 \!\in\! P\, Q_1x_1 \! \in \! P \ldots Q_n x_n \!\in \! P\,\varphi(x_0,\ldots,x_n),\]
where $P$ does not occur in $\varphi(x_0,\ldots,x_n)$ and $Q_i x_i \! \in \! P\, \psi$ denotes either $\exists x_i\, (P(x_i)\wedge \psi)$ or $\forall x_i\, (P(x_i)\rightarrow \psi)$. We say that $\theta(P)$ is \emph{second-order categorical} if $(\mathbb{N}, \mathcal{P}(\mathbb{N}))\vDash \forall P\, \theta(P)$ and for every $\LPA$-structure $\mathcal{M}$, if $(\mathcal{M}, \mathcal{P}(M))\vDash \forall P\,\theta(P)$, then $\mathcal{M}$ is isomorphic to $\mathbb{N}$.

\begin{theorem}[Wilkie \cite{wilkie:schemes}]
    Let $\theta(P)$ be a restricted $\LPA$ scheme template which is second-order categorical. Then there is a true $\LPA$-sentence $\varphi$ such that \[\theta[\LPA]+\varphi\vdash \PA.\]
\end{theorem}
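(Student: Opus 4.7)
The plan is to combine the two hypotheses --- the restrictedness of $\theta$ and the second-order categoricity --- to isolate a single first-order sentence $\varphi$ true in $\N$ that, together with $\theta[\LPA]$, yields every induction instance. The first step is to extract the main consequence of restrictedness: if $\theta(P) = Q_0 x_0 \! \in \! P \ldots Q_n x_n \! \in \! P \, \varphi_0(\bar{x})$ with $P$ absent from $\varphi_0(\bar{x})$, then for any structure $\MM$ and any $\LPA$-definable (possibly with parameters) subset $A \subseteq M$, the truth value in $\MM$ of the sentence obtained by plugging $A$ for $P$ in $\theta$ depends only on the set $A$, not on any particular defining formula. Consequently, $\MM \vDash \theta[\LPA]$ is equivalent to the external statement that $(\MM, A) \vDash \theta(P)$ for every such $A$.

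The plan is then to find a true sentence $\varphi$ ensuring that in any $\MM \vDash \theta[\LPA] + \varphi$, the collection of parametrically $\LPA$-definable subsets of $M$ is rich enough to ``simulate'' $\mathcal{P}(M)$ as far as $\theta$ is concerned. Specifically, if some $\MM \vDash \theta[\LPA] + \varphi$ were to fail the induction instance for an $\LPA$-formula $\chi$, one would use $\varphi$ to produce a specific definable set $A_\chi \subseteq M$ such that the associated instance of $\theta[\LPA]$, combined with the failure of induction for $\chi$, forces a configuration incompatible with $(\N, \mathcal{P}(\N)) \vDash \forall P\, \theta(P)$. The natural candidate for $\varphi$ is a $\Pi_2$-sentence tailored to the quantifier prefix of $\theta$ that expresses a uniform Henkin/witnessing construction; such a $\varphi$ is true in $\N$ by elementary means but becomes a strong constraint in the presence of the scheme $\theta[\LPA]$, because $\theta[\LPA]$ automatically provides a matching instance for every definable set produced by the construction.

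The main obstacle is that $\PA$ is not finitely axiomatizable, so a single $\varphi$ must handle all induction instances at once; simple compactness will not suffice, since $\theta[\LPA] + \Th{\N} \vdash \PA$ holds trivially only because $\Th{\N}$ already contains $\PA$. The resolution is to have $\varphi$ express a \emph{uniformity} or \emph{reflection} property, so that for each $\LPA$-formula $\chi$ the scheme $\theta[\LPA]$ contributes the needed instance $\theta[\psi_\chi/P]$ and, jointly with $\varphi$, derives $\mathrm{Ind}(\chi)$ by a uniform meta-argument. The delicate interplay between the restricted quantifier structure of $\theta$, which tightly controls how $\theta(P)$ depends on $P$, and second-order categoricity, which says this tightly controlled dependence nevertheless pins $\N$ down among all $\LPA$-structures, is what ultimately enables a single true $\varphi$ to carry the full weight of the induction scheme.
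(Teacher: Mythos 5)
This statement is quoted from Wilkie's paper \cite{wilkie:schemes} and is not proved in the present paper, so there is no internal proof to compare against; I can only assess your argument on its own terms. As it stands, it is a statement of intent rather than a proof, and the two hypotheses of the theorem are never actually put to work.

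First, the property you extract as ``the main consequence of restrictedness'' --- that the truth value of $\theta$ with a definable set $A$ plugged in for $P$ depends only on $A$ and not on its defining formula, so that $\MM \vDash \theta[\LPA]$ is equivalent to $(\MM,A)\vDash\theta(P)$ for all parametrically definable $A$ --- is just the substitution lemma and holds for \emph{every} scheme template, restricted or not. So your argument never uses restrictedness in any substantive way. The actual difficulty is that second-order categoricity quantifies over all of $\mathcal{P}(M)$, while the scheme only controls $\theta(A)$ for definable $A$; the restricted quantifier form $Q_0x_0\!\in\!P\ldots Q_nx_n\!\in\!P\,\varphi$ is precisely what allows the truth of $\theta(A)$ for arbitrary $A$ to be reduced to its truth on sets the model can see (only finitely many elements of $P$ are ever interrogated), and that reduction is absent from your proposal. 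Saying the definable sets are ``rich enough to simulate $\mathcal{P}(M)$ as far as $\theta$ is concerned'' names the bridge that must be built without building it.

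Second, the sentence $\varphi$ is never defined: ``a $\Pi_2$-sentence tailored to the quantifier prefix of $\theta$ that expresses a uniform Henkin/witnessing construction'' is not a construction, and the set $A_\chi$ that is supposed to refute a failure of induction for $\chi$ is never exhibited, nor is it shown why $\theta(A_\chi)$ together with $\neg\mathrm{Ind}(\chi)$ contradicts $(\N,\mathcal{P}(\N))\vDash\forall P\,\theta(P)$. You correctly identify that the non-finite-axiomatizability of $\PA$ is the central obstacle, but the proposed ``resolution'' (that $\varphi$ express a ``uniformity or reflection property'') has no mathematical content. In short, every step where the theorem could fail --- the use of restrictedness, the choice of $\varphi$, and the transfer from categoricity over the full power set of $\N$ to first-order derivability of each induction instance --- is left as a placeholder, so there is no proof to verify.
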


Recall that a theory being solid is, in a loose sense,
a categoricity property.
Theorem \ref{main_thm_solid_inf-below_PA} shows that
a natural variant of Wilkie's result with
``solid theory'' replacing ``restricted second-order categorical scheme'' is false.

Our strategy for the proof of Theorem \ref{main_thm_solid_inf-below_PA}
is similar to the one used to prove
Theorem \ref{main_thm_solid_nointerpret_PA}
in the previous subsection. 
We will define a retract-disjoint family of theories $\tnf{n}$ with the following additional property: for every $n$, $\tnf{n}$ is consistent with $\Thr{\Pi_{n-1}}{\mathbb{N}} + \neg \IS{n+1}$. 
We observe that the theories $\tnd{n}$ do not have this consistency property because they all imply
the false $\Sigma_1$ statement $\neg \Con{\PA}$
(additionally, each $\tnd{n}$ implies a $\Sigma_1$ statement 
inconsistent with $\PA$, namely $\exists x\, \xi_n(x)$ where $\xi_n$ is the flexible formula from Section \ref{sec:proper-solid}, which is inconsistent with $\PA$). 
Our way of making the family $\{\tnf{n}\}_{n\in\omega}$ retract-disjoint will also be different from
the one in Section \ref{subsec:solid-non-interpret}. (Plausibly, an adaptation of the previous method could also be used here. However we find the current mode of presentation more convenient and it also allows us to present another tool for building retract-disjoint families.) Finally, the target theory $TF_n$ is constructed as an infinite definition by cases using the theories $\tnf{n}$.

\paragraph{Construction of $TF_n$.}
As in the case of $\tn{n}$, we first describe the 
construction of the ``intended model'' of $\tnf{n}$ and then extract its relevant properties in the form of axioms. 

Let $\zeta_n$ be a $\Sigma_{n+1}$ formula that is $(\Pi_n,\Sigma_1)$-flexible
over $\IS{n+1}$ and witnesses Theorem~\ref{tw_flex_main}.
The construction starts in the standard model of $\CT^{n}[\PA]$, 
i.e. the $n$-th element of the sequence defined recursively by:
\begin{align*}
    \mathcal{CT}_0&:= \mathbb{N}\\
    \mathcal{CT}_{j+1}&:= (\mathbb{N}, \Th{\mathcal{CT}_0},\ldots, \Th{\mathcal{CT}_j}).
\end{align*} In $\mathcal{CT}_n$, we perform a procedure of finding a Henkin structure $\mathcal{H}$ and its pointwise $\Sigma_{n+1}$-definable substructure $\KK$ similar to the one described by the interpretation $\hn{n}$ from Section \ref{sec:proper-solid}, 
except that now instead of the theory from (\ref{eq:theory-1}) we start with
\begin{equation}\label{eq:theory-for-KK-with-iterated-CT}
    \IS{n+1}\,\cup\,     \{\zeta_{n}(\num{k}), \neg\zeta_{n}(\num{\ell})\colon\, P_{n}(k), \neg P_{n}(\ell)\}\,\cup\, \{\sigma\in \LPA\colon\, \Sat{\Pi_{n}}(\ulcorner\sigma\urcorner)\}
    \text{.}
\end{equation}
Here, $P_n$ is the highest-level truth predicate of $\mathcal{CT}_n$. Hence the second component of the theory in (\ref{eq:theory-for-KK-with-iterated-CT}) ensures that $\zeta_{n}$ encodes the theory of $\mathcal{CT}_{n-1}$, and the third one -- that $\mathcal{H}$
satisfies $\Thr{\Pi_n}{\mathbb{N}}$.
The theory in (\ref{eq:theory-for-KK-with-iterated-CT})
is consistent by the flexibility properties of $\zeta_n$.

We use the notation $\hn{n}', \gn{n}'$ 
for analogues of the interpretations $\hn{n}, \gn{n}$ from Section \ref{sec:proper-solid} adapted to the current setting.
So, $\hn{n}'$ describes the
procedure of constructing the model $\KK_n:=\mathcal{K}^{n+1}(\mathcal{H})$,
where $\mathcal{H}$ is the model of the theory obtained by taking the leftmost completion of the henkinization of the theory in {(\ref{eq:theory-for-KK-with-iterated-CT})}.
As in the case of our previous constructions, we have a canonical definition $\delta_n$ that isolates the standard cut in $\KK_n$. 
The interpretation $\gn{n}'$ of $\mathcal{CT}_n$ in $\KK_n$ has
the universe defined by $\delta_n$, the arithmetical operations given by restricting $+$ and $\times$ to $\delta_n$,
and the $n$-th truth predicate $P_n$ given by $\zeta_n$. 
(Note that $\zeta_n$ is $\Sigma_{n+1}$, so its 
truth values on elements of $\KK_n$ are the same 
in $\KK_n$ as in $\mathcal{H}$ by partial elementarity.)
The predicates $P_1, \ldots, P_{n-1}$ are determined by $P_n$.

Now, 
$\tnf{n}$ is defined as $\tn{n}$ is Section {\ref{sec:proper-solid}}, except that 
$\hn{n}, \gn{n}$ are changed to $\hn{n}', \gn{n}'$, 
and $``\gn{n}\vDash \CT[\PA]"$ in (iii) is replaced by

\begin{itemize}
    \item[(iii)$'$] $\gn{n}'\vDash \CT^n[\PA]$.
\end{itemize}

Finally, we let $TF_n$ be ${\IS{n}} + {\exp} + {\bigoplus_{k \ge n}(\tnf{k}|{\IS{k}}\wedge {\neg \IS{k+1}})}$.

\paragraph{Verification of the properties of $TF_n$.}
By the above construction of $\tnf{n}$ and the fact that for each $n$ and each model $\MM\vDash \IS{n}+\exp$, we have $\mathcal{K}^{n+1}(\MM)\vDash\lnot\BS{n+1}$ and $\mathcal{K}^{n+1}(\MM)\preceq_{n+1}\MM$, we obtain: 

\begin{lemma}\label{lem:tnf-consistent-with-true-and-false}
    For each $n\geq 1$, $\tnf{n}$ is consistent with $\Thr{\Pi_{n}}{\mathbb{N}}+\neg\BS{n+1}$.
\end{lemma}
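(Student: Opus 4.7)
I would exhibit an explicit model: apply the procedure described by $\hn{n}'$ inside the standard model $\mathcal{CT}_n$ of $\CT^n[\PA]$ to obtain a Henkin model $\mathcal{H}$ and then $\KK_n$, and verify $\KK_n \vDash \tnf{n} + \Thr{\Pi_{n-1}}{\N}$. Note that $\neg\BS{n+1}$ is already part of axiom (i) of $\tnf{n}$, so apart from the work required to ensure $\KK_n$ is nonstandard (which activates that clause) it needs no separate treatment.

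The preliminary task is to check that the $\hn{n}'$-theory, augmented by $\Thr{\Pi_{n-1}}{\N}$, is consistent:
\[U_n := \IS{n+1} \cup \{\zeta_n(\num{k}) : k \in \Th{\mathcal{CT}_{n-1}}\} \cup \{\neg\zeta_n(\num{k}) : k \notin \Th{\mathcal{CT}_{n-1}}\} \cup \Thr{\Pi_{n-1}}{\N}.\]
By compactness it suffices to handle finite fragments. Given positive instances indexed by $k_1,\dots,k_m$, possibly some negative instances $\neg\zeta_n(\num{\ell_j})$, and finitely many true $\Pi_{n-1}$-sentences, set $\varphi(x) := x\!=\!\num{k_1} \vee \dots \vee x\!=\!\num{k_m}$, which is quantifier-free, hence $\Sigma_1$. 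The $(\Sigma_{n-1},\Sigma_1)$-flexibility of $\zeta_n$ over $\IS{n+1}$, applied to $\N$, produces a $\Sigma_{n-1}$-elementary extension $\NN \succcurlyeq_{n-1} \N$ satisfying $\IS{n+1} + \forall x\,(\zeta_n \leftrightarrow \varphi)$. The chosen true $\Pi_{n-1}$-sentences carry over to $\NN$ by $\Sigma_{n-1}$-elementarity, and the disjointness of the positive and negative index sets ensures $\NN$ models the finite fragment.

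With $\Con{U_n}$ in hand, run the $\hn{n}'$-construction inside $\mathcal{CT}_n$ to obtain $\KK_n$. The verification that $\KK_n \vDash \tnf{n}$ (axioms (i)--(v), with (iii)$'$ in place of (iii)) is a direct adaptation of the argument from Proposition \ref{prop:biint_truth}, replacing $\CT[\PA]$ with $\CT^n[\PA]$ throughout; and $\KK_n \vDash \Thr{\Pi_{n-1}}{\N}$ is immediate, since $\mathcal{H} \vDash \Thr{\Pi_{n-1}}{\N}$ (as this is built into $U_n$) and $\Pi_{n-1}$-truth is preserved under $\KK_n \preccurlyeq_{n+1} \mathcal{H}$.

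The main obstacle is showing that $\KK_n$ is nonstandard. In the $\Sigma_1$-setting of Section \ref{sec:proper-solid} this was immediate from Corollary \ref{cor:flexible-con}; here $\zeta_n$ is $\Sigma_{n+1}$ and out of reach of that corollary, so I would argue by a Tarski-style diagonal. Suppose toward contradiction that $\KK_n \cong \N$. Then the shortest definable cut $\delta_n^{\KK_n}$ equals all of $\N$, so $\KK_n^{\gn{n}'}$ has $\LPA$-reduct $\N$. By axiom (iii)$'$ this is a model of $\CT^n[\PA]$, and by external induction on formula complexity the inductive $\CT$-axioms force each interpreted $P_i$, restricted to (standard) $\mathcal{L}_{i-1}$-sentence codes, to be the canonical iterated Tarskian predicate $\Th{\mathcal{CT}_{i-1}}$. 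In particular, restricting $\zeta_n^{\N}$ to codes of $\LPA$-sentences yields $\Th{\N}$, which would then be $\Sigma_{n+1}$-definable in $\N$---contradicting Tarski's theorem. Hence $\KK_n$ is nonstandard, and pointwise $\Sigma_{n+1}$-definability yields $\KK_n \vDash \neg\BS{n+1}$ as usual.
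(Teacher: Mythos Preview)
Your proof is correct and follows the same approach as the paper: build $\KK_n$ by running $\hn{n}'$ in $\mathcal{CT}_n$, check consistency of the underlying theory via compactness and $(\Sigma_{n-1},\Sigma_1)$-flexibility, and verify the axioms of $\tnf{n}$ together with $\Thr{\Pi_{n-1}}{\N}$ as in Proposition~\ref{prop:biint_truth}. The paper treats the verification as ``routine'' and does not spell out nonstandardness; your Tarski argument is a legitimate way to fill that in, though you could shortcut the detour through axiom (iii)$'$ by observing directly from the construction that $\zeta_n^{\KK_n}$ agrees with $\Th{\mathcal{CT}_{n-1}}$ on standard inputs (via $\KK_n \preccurlyeq_{n+1} \mathcal{H}$), which already gives the contradiction with undefinability of truth if $\KK_n = \N$.
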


The proof of the next lemma is also fully very similar to its 
analogue for $\tn{n}$ (Proposition \ref{prop:biint_truth}), 
so we only sketch it.
\begin{lemma}\label{lem:tnf-k-biint-ct-k}
    For each $n\geq 1$, $\tnf{n}$ is bi-interpretable with $\CT^{n}[\PA]$.
\end{lemma}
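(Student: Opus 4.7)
The plan is to mimic the proof of Proposition \ref{prop:biint_truth} almost verbatim, with the interpretations $\hn{n}'$ and $\gn{n}'$ taking the place of $\hn{n}$ and $\gn{n}$, and $\CT^{n}[\PA]$ taking the place of $\CT[\PA]$. Axioms (iii)$'$--(v) of $\tnf{n}$ directly state that, inside any model of $\tnf{n}$, the interpretation $\gn{n}'$ lands in $\CT^{n}[\PA]$ and that $\gn{n}'\hn{n}'$ is internally isomorphic to the identity via $j_n$. So the real content is to verify that $\hn{n}'$ provides an interpretation of $\tnf{n}$ in $\CT^{n}[\PA]$, and that inside $\CT^{n}[\PA]$ the composition $\hn{n}'\gn{n}'$ is definably isomorphic to the identity.

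The first step is to check, working inside an arbitrary $\MM\vDash\CT^{n}[\PA]$, that the $\mathcal{L}_{n}$-definable theory
\[U':=\IS{n+1}\cup\{\zeta_n(\num{k}):P_n(k)\}\cup\{\neg\zeta_n(\num{\ell}):\neg P_n(\ell)\}\cup\{\varphi\in\Pi_{n-1}:P_n(\varphi)\}\]
is consistent. This uses the ``moreover'' part of Theorem \ref{tw_flex_main}\ref{it:tw_flex_main-2} for $\zeta_n$: if some finite piece of $U'$ were $\IS{n+1}$-inconsistent, then, quantifying out the parameters and packaging the finitely many chosen equalities and $\Pi_{n-1}$ sentences as a single $\Sigma_1$ formula $\varphi(x)$ and a single $\Pi_{n-1}$ sentence $\psi$, we would obtain $\neg\Pi_{n-1}\mhyphen\Con{\IS{n+1}+\forall x\,(\zeta_n(x)\leftrightarrow\varphi(x))}$, while $P_n$ asserts $\psi$ is true. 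The formalized flexibility statement then yields $\neg\Pi_{n-1}\mhyphen\Con{\IS{n+1}}$, contradicting the availability of $\Sigma_{n}$-reflection for $\IS{n+1}$ inside $\CT^{n}[\PA]$.

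Once consistency of $U'$ is secured, we invoke the arithmetized completeness theorem inside $\MM$ to produce a $\Delta_2(P_n)$-definable Henkin model $\HH^{\MM}$ of $U'$ along with its $\MM$-definable satisfaction relation, and then cut down to the pointwise $\Sigma_{n+1}$-definable part $\MM^{\hn{n}'}:=\mathcal{K}^{n+1}(\HH^{\MM})$. Exactly as in the proof of Proposition \ref{prop:biint_truth}, one verifies $\Sigma_{n+1}$-elementarity between $\MM^{\hn{n}'}$ and $\HH^{\MM}$, that $\MM^{\hn{n}'}\vDash\IS{n}+\exp+\neg\BS{n+1}$, that the $j_n$-image of $\MM{\upharpoonright}\LPA$ is the shortest $\MM$-definable cut in $\MM^{\hn{n}'}$ and coincides with $\delta_n^{\MM^{\hn{n}'}}$, and that $j_n$ supplies the required isomorphism from $\id$ onto $\hn{n}'\gn{n}'$. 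Axiom (iv) follows because each element of $\MM^{\hn{n}'}$ has a least $\Sigma_{n+1}$-definition in that cut.

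The single new verification, which is the main obstacle, is axiom (iii)$'$: inside $\MM$, the interpretation $\gn{n}'$ applied in $\MM^{\hn{n}'}$ is a model of $\CT^{n}[\PA]$. The $\LPA$-part is handled as before (the numerals $\num{k}$ of $\MM$ correspond under $j_n$ to the initial segment of $\MM^{\hn{n}'}$, and $\Sigma_{n+1}$-elementarity transfers $\zeta_n$ from $\HH^{\MM}$ to $\MM^{\hn{n}'}$, so the top-level predicate $P_n$ of $\gn{n}'$ agrees with the $P_n$ of $\MM$ along $j_n$). For the lower truth predicates $P_1,\ldots,P_{n-1}$, which in $\gn{n}'$ are defined from $\zeta_n$ by the Tarskian recursion, one argues by induction on the level $i=n-1,\ldots,1$: the recursion is carried out inside the cut $\delta_n$, which is an $\MM$-definable cut closed under the required syntactic operations, and the compositional clauses of $\CT^{n}[\PA]$ for $P_i$ follow from the corresponding compositional clauses for $P_n$ in $\MM$, transported along $j_n$ and restricted to codes of $\mathcal{L}_{i-1}$-sentences; the axioms equating $P_{i+1}$ with $P_i$ on $\mathcal{L}_{i-1}$ are then immediate from how $\gn{n}'$ is set up. This gives $\MM^{\hn{n}'\gn{n}'}\vDash\CT^{n}[\PA]$ and, combined with the previous paragraph, shows that $j_n$ witnesses that $\hn{n}'\gn{n}'$ is $\MM$-definably isomorphic to the identity, completing the bi-interpretation.
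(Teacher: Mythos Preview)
Your approach is correct and matches the paper's own proof, which likewise reduces everything to Proposition \ref{prop:biint_truth} with $\hn{n}', \gn{n}'$ in place of $\hn{n}, \gn{n}$ and uses the formalized flexibility of $\zeta_n$ (the ``moreover'' clause of Theorem \ref{tw_flex_main}\ref{it:tw_flex_main-2}) inside $\CT^n[\PA]$ to secure the consistency of $U'$. Two small corrections are worth noting. First, the isomorphism named in axiom (iv) is $i_n$, not $j_n$. Second, the amount of reflection required is $\Sigma_{n+2}\mhyphen\rfn(\IS{n+1})$, not $\Sigma_n$-reflection: substituting $n-1$ for the theorem's ``$n$'' and $1$ for its ``$k$'' gives $\max((n-1)+3,2)=n+2$, and this is exactly what the paper invokes via $\CT^n[\PA]\vdash\Sigma_{n+2}\mhyphen\rfn(\IS{n+1})$. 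Relatedly, the contradiction in the consistency step is reached directly: the formalized flexibility statement asserts $\Pi_{n-1}\mhyphen\Con{\IS{n+1}+\forall x\,(\zeta_n(x)\leftrightarrow\varphi(x))}$ for every $\Sigma_1$ formula code $\varphi$, which immediately refutes the hypothetical $\neg\Pi_{n-1}\mhyphen\Con{\ldots}$; there is no intermediate step yielding $\neg\Pi_{n-1}\mhyphen\Con{\IS{n+1}}$.
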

\begin{proof}
    The main change compared to the proof of Proposition \ref{prop:biint_truth} is that instead of a formalization of the arguments from Section \ref{sec:proper-solid} in $\CT[\PA]$, we use a formalization of the proof of Lemma \ref{lem:tnf-consistent-with-true-and-false}
    in $\CT^n[\PA]$. The verification that $\zeta_n$ 
    has the required $(\Pi_n,\Sigma_1)$-flexibility
    property over $\IS{n+1}$ can be carried out in $\CT^{n}[\PA]$ thanks to
    Theorem \ref{tw_flex_main}
    and the fact that $\CT^{n}[\PA]\vdash \Pi_{n}\mhyphen\mathrm{Con}(\IS{n+1})$. 
\end{proof}

\begin{corollary}\label{cor_tnf_solid}
    For each $n\geq 1$, the theory $\tnf{n}$ is solid.
\end{corollary}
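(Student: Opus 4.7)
The plan is to obtain Corollary \ref{cor_tnf_solid} as an immediate consequence of the two preservation facts already established in the paper, exactly mirroring how Corollary \ref{cor_tnd_solid} was deduced from Lemma \ref{lem_biint_tnd_truth} together with Corollary \ref{cor:solid_ctn} and Corollary \ref{cor_biint_pres_solid}. There is essentially no new work to do here; the substantive content is in the construction of the interpretations $\hn{n}'$ and $\gn{n}'$ and the verification that they give a bi-interpretation, which was carried out in Lemma \ref{lem:tnf-k-biint-ct-k}.

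Concretely, I would first invoke Lemma \ref{lem:tnf-k-biint-ct-k} to assert that $\tnf{n}$ is bi-interpretable with $\CT^{n}[\PA]$, via the interpretations $\hn{n}'$ and $\gn{n}'$ (together with the isomorphisms $i_n, j_n$ axiomatized by the analogues of \ref{it:t0n-4} and \ref{it:t0n-5} in the definition of $\tnf{n}$). Next I would invoke Corollary \ref{cor:solid_ctn} to note that the theory $\CT^{n}[\PA]$ on the other side of the bi-interpretation is solid. Finally, applying Corollary \ref{cor_biint_pres_solid} — which asserts that solidity is transferred along bi-interpretations of theories — yields that $\tnf{n}$ is solid.

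There is no real obstacle: the only thing one might worry about is whether the formulations of Lemma \ref{lem:tnf-k-biint-ct-k} and Corollary \ref{cor_biint_pres_solid} line up, i.e.~whether the bi-interpretation is in the official sense of Section \ref{sec:prelim} rather than a purely semantic one. But the proof of Lemma \ref{lem:tnf-k-biint-ct-k} is explicitly modeled on that of Proposition \ref{prop:biint_truth}, which produces a theory-level bi-interpretation (the verification that the relevant compositions are provably isomorphic to the identity is internal to $\CT^n[\PA]$ on one side and axiomatized into $\tnf{n}$ on the other), so Corollary \ref{cor_biint_pres_solid} applies without modification. Thus the proof is a one-line chain: bi-interpretability with a solid theory plus preservation of solidity under bi-interpretability gives solidity of $\tnf{n}$.
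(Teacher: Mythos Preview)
Your proposal is correct and matches the paper's own proof essentially verbatim: the paper simply cites Lemma \ref{lem:tnf-k-biint-ct-k}, Corollary \ref{cor:solid_ctn}, and the preservation of solidity under bi-interpretability (Corollary \ref{cor_biint_pres_solid}).
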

\begin{proof}
By Lemma \ref{lem:tnf-k-biint-ct-k},
Corollary \ref{cor:solid_ctn},  
and Corollary \ref{cor_biint_pres_solid}.
\end{proof}

\begin{corollary}\label{cor:pa-tnf-retract-disjoint}
    The family $\{\PA\} \cup \{\tnf{n}\}_{n\in\omega}$ is retract-disjoint.
\end{corollary}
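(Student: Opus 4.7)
My plan is to derive the corollary from the retract-disjointness of the iterated-truth family $\{\CT^n[\PA]\}_{n \in \omega}$ (Lemma \ref{lem:truthin}) by transferring this property across bi-interpretations using Corollary \ref{cor_biint_pres_retract}. The key observation is that Definition \ref{defn:ct-n} sets $\CT^0[\PA] := \PA$, so the family $\{\PA\} \cup \{\tnf{n}\}_{n \ge 1}$ is in natural componentwise correspondence with $\{\CT^n[\PA]\}_{n \in \omega}$.

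Concretely, I would set $U_0 := \PA$ and $U_n := \CT^n[\PA]$ for $n \ge 1$ on one side, and $V_0 := \PA$ and $V_n := \tnf{n}$ for $n \ge 1$ on the other. For each $n$, the theory $U_n$ is bi-interpretable with $V_n$: at $n = 0$ this is trivial since $U_0 = V_0 = \PA$, and at each $n \ge 1$ this is precisely Lemma \ref{lem:tnf-k-biint-ct-k}. Since the family $\{U_n\}_{n \in \omega}$ coincides with $\{\CT^n[\PA]\}_{n \in \omega}$, which is retract-disjoint by Lemma \ref{lem:truthin}, applying Corollary \ref{cor_biint_pres_retract} will immediately give the retract-disjointness of $\{V_n\}_{n \in \omega} = \{\PA\} \cup \{\tnf{n}\}_{n \ge 1}$.

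I do not expect any substantive obstacle here, as the heavy lifting has already been done elsewhere: the categoricity-like content (ultimately traceable to Tarski's undefinability theorem via Lemma \ref{lem:indfact}) is packaged into Lemma \ref{lem:truthin}, while Lemma \ref{lem:tnf-k-biint-ct-k} supplies the bridge from the $\tnf{n}$ to their truth-theoretic counterparts, and Corollary \ref{cor_biint_pres_retract} transports retract-disjointness along bi-interpretations automatically. The proof is therefore a three-line invocation of these three results.
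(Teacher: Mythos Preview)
Your proposal is correct and matches the paper's own argument essentially line for line: the paper's proof simply cites Lemma \ref{lem:tnf-k-biint-ct-k} and Corollary \ref{cor_biint_pres_retract}, with Lemma \ref{lem:truthin} serving as the (implicit) source of the retract-disjoint family $\{\CT^n[\PA]\}_{n\in\omega}$ to which bi-interpretability is applied. Your observation that $\CT^0[\PA] = \PA$ handles the $\PA$ component is exactly the point, and your reading of the family as $\{\PA\}\cup\{\tnf{n}\}_{n\ge 1}$ is the intended one given that Lemmas \ref{lem:tnf-consistent-with-true-and-false} and \ref{lem:tnf-k-biint-ct-k} are stated for $n \ge 1$.
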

\begin{proof}
    By Lemma \ref{lem:tnf-k-biint-ct-k}
    and Corollary \ref{cor_biint_pres_retract}.
\end{proof}

\begin{proof}[Proof of Theorem \ref{main_thm_solid_inf-below_PA}]

Clearly, $TF_n$ is an r.e.~subtheory
of $\PA$, and it contains $\IS{n}$ by definition.

The fact that $TF_n + \Thr{\Pi_k}{\N}$ does not imply
$\PA$ for any $k$ follows immediately from 
Lemma \ref{lem:tnf-consistent-with-true-and-false}.

To prove solidity of $TF_n$, we invoke Proposition \ref{pres_solid_disj_sum} with $U_k := \tnf{k}$, 
$V:= \IS{n}+\exp$,
and $\varphi_k:= {\IS{k}}\wedge\neg{\IS{k+1}}$
(where $\mathrm{I}\Delta_0$ is replaced by 
a sufficiently large finite fragment, as before). 
By Corollary \ref{cor_tnf_solid},
each $U_k$ is solid, and $V \cup \{\neg\varphi_k\colon\, k\in\omega\}$ is solid because it is equivalent to $\PA$.
The sentences $\varphi_k$ are pairwise inconsistent. 
Finally, the family ${\PA} \cup {\{U_k\}_{k\in\omega}}$ is retract-disjoint
by Corollary \ref{cor:pa-tnf-retract-disjoint}.
\end{proof}

\begin{remark}
We can combine Theorems \ref{main_thm_solid_nointerpret_PA}
and \ref{main_thm_solid_inf-below_PA} in the following sense. 
If we let $U_{2n+2} := \tnf{2n+2}$ and $U_{2n+1}:= \tnd{2n+1}$, 
one can check that for every $n \geq 1$ the theory 
\[{\IS{n}} + {\exp} + \bigoplus_{k \ge n}(U_k |{\IS{k}}\wedge{\neg\IS{k+1}})\] is a proper solid subtheory of $\PA$ extending $\IS{n}$ that is both unable to interpret $\PA$ and
``infinitely below'' $\PA$ 
in the sense of Theorem \ref{main_thm_solid_inf-below_PA}. 
\end{remark}

\section{Separation theorems}\label{sec:separation}

We now focus on separating the categoricity-like properties 
considered in this paper. In particular, we obtain a separation of tightness from neatness, and a nontrivial
(e.g., not based on a complete theory) example separating neatness from semantical tighness and solidity. 

In most of our constructions, we exploit the fact that an actually existing isomorphism that would be needed to witness one of the properties we study or the failure of another is somehow difficult to express. In some cases, the isomorphism cannot be defined internally in a structure, in others, its definition requires parameters from the structure.  

Some of our nontrivial examples take the form 
${(T_1|\neg \psi)}\bigoplus{(T_2|\psi)}$ for a sentence $\psi$. To avoid using such cumbersome notation, we will write
${T_1} \oplus_\psi {T_2}$ in its stead.

\subsection{Tame separators}\label{subsec:tame}
We first discuss two simple examples of separations between the syntactic and the semantical notions, based on the fact that any complete theory has to be neat. The theories in these examples have the virtue of being r.e., but they do not interpret any arithmetic at all; in particular, they are not sequential.

\begin{prop}
$\DLO$ is neat and therefore tight, but it is not semantically tight.
\end{prop}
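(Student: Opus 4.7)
I would begin by noting that $\DLO$ is complete --- for instance by $\omega$-categoricity together with the {\L}o\'s--Vaught test, using that $\DLO$ has no finite models --- so every consistent extension of $\DLO$ in the language $\{<\}$ coincides with $\DLO$. Neatness is then immediate: if $U,V\subseteq\{<\}$ extend $\DLO$ and $U$ is a retract of $V$, then $U$ interprets $V$, so inconsistency of $V$ forces inconsistency of $U$ and hence the trivial $U\vdash V$; otherwise both theories are consistent and coincide with $\DLO$, so $U\vdash V$ again. Tightness follows via the neatness $\Rightarrow$ tightness implication recorded in Section~\ref{sec:prelim}.

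To refute semantical tightness, I would work in $\MM := (\mathbb{Q},<)$ and let $\mathsf{N}$ be the parameter-free ``reverse order'' translation given by $\delta_{\mathsf{N}}(x) := (x = x)$, $(=^{\mathsf{N}})(x,y) := (x = y)$, and $(<^{\mathsf{N}})(x,y) := (y < x)$. Then $\MM^{\mathsf{N}} = (\mathbb{Q},>)$, which is a dense linear order without endpoints and thus a model of $\DLO$. A direct calculation of the composition shows that $\mathsf{N}\mathsf{N}$ is (logically equivalent to) the identity translation, so $(\mathsf{N},\mathsf{N})$ is a bi-interpretation in $\MM$ with the identity function playing the role of $\iota$ in both clauses of the definition. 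It therefore suffices to verify that $\mathsf{N}$ is not $\MM$-isomorphic to $\id_{\MM}$, i.e.\ that no $\MM$-definable (with parameters) order-reversing bijection of $\mathbb{Q}$ exists.

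This non-existence is the only substantive step, and I would prove it using quantifier elimination for $\DLO$: any formula $\iota(x,y;\bar a)$ purportedly defining such a bijection, with parameters $\bar a = a_1 < \dots < a_n$, is equivalent to a quantifier-free one, hence to a Boolean combination of atomic formulas among $x<y$, $y<x$, $x=y$, $x<a_i$, $a_i<x$, $y<a_i$, $a_i<y$, $x=a_i$, $y=a_i$. Fix an open interval $I$ determined by $\bar a$: all elements of $I$ realise the same type over $\bar a$, so for $x \in I$ the set $\{y : \iota(x,y;\bar a)\}$ can depend on $x$ only through the atoms comparing $x$ and $y$. A short case analysis then shows that on $I$ the resulting function must either be constant or coincide with the identity, and neither option can be an order-reversing bijection on the infinite dense set $I$. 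The only genuinely nontrivial step is this quantifier-elimination bookkeeping; everything else reduces to completeness of $\DLO$ and the involutive nature of order reversal. I would also observe that since $\mathsf{N}$ and the non-existent isomorphism would both be parameter-free, the example refutes semantical tightness even under the more restrictive ``parameter-free'' variant discussed after Definition~\ref{def:sem-tight}.
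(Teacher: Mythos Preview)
Your proof is correct, but it takes a different route from the paper's. The paper also dispatches neatness by completeness, but for the failure of semantical tightness it exhibits two genuinely non-isomorphic models of $\DLO$: $\MM = \langle\mathbb{Q},\le\rangle + \langle\mathbb{R},\le\rangle$ and $\NN = \langle\mathbb{R},\ge\rangle + \langle\mathbb{Q},\ge\rangle$, which are bi-interpretable via order reversal but cannot be isomorphic (in $\MM$ some elements have only countably many predecessors, in $\NN$ none do). This avoids the quantifier-elimination bookkeeping entirely and, more importantly, yields a stronger conclusion: it refutes semantical tightness even in the weaker Freire--Hamkins sense discussed in the remark after Definition~\ref{def:sem-tight}, where the isomorphism is not required to be definable. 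Your models $(\mathbb{Q},<)$ and $(\mathbb{Q},>)$ \emph{are} abstractly isomorphic (via $x\mapsto -x$), so your argument only refutes the paper's default definition requiring an $\MM$-definable isomorphism; your closing remark about the ``more restrictive'' variant has the direction of strength backwards---refuting that variant is the easier task, already implied by what you did. On the plus side, your example lives entirely inside a single countable model and the QE argument you sketch is sound: on any open interval determined by the parameters, a definable function can only be constant or the identity, neither of which is an order-reversing bijection there.
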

\begin{proof}
Note first that $\DLO$ is clearly neat because it is complete. To show that it is not semantically tight, let $\MM = \tuple{\mathbb{Q},\le} + \tuple{\mathbb{R},\le}$ and let $\NN = \tuple{\mathbb{R}, \ge} +\tuple{\mathbb{Q},\ge}$, where $+$ stands for the disjoint sum of linear orders. Since the order on $\NN$ is just the inverse of the order on $\MM$, the two structures are clearly bi-interpretable. 
Yet, $\MM$ is not isomorphic to $\NN$, so $\DLO$ fails to be semantically tight.
\end{proof}

\begin{remark}
As another example in this spirit that additionally illustrates the role of multi-dimensional interpretations and the subtleties involved in defining semantical tightness, consider the theory of infinite sets in the empty language. Just like $\DLO$, this theory is complete and hence trivially neat.

We show that this theory is not semantically tight if one is willing to allow multi-dimensional interpretations. Let
$\MM = \mathbb{N},$ $\NN = \mathbb{N}\setminus\{0\}$, $\MM^* = \{\langle n,n\rangle\colon\, n\geq 1\} \cup \{ \langle 1,2\rangle\}$, $\NN^* = \{\langle n,n\rangle\colon\, n\geq 1\}$. Then we have
three interpretations witnessing $\MM \rhd \NN \rhd \MM^* \rhd \NN^*$. The interpretations are defined in the obvious way, though the one of $\MM^*$ in $\NN$ is two-dimensional, and they all use parameters, namely $0$; $1,2$; and $\tuple{1,2}$, respectively, in order to exclude the appropriate elements from the domain. There is an $\MM$-definable isomorphism between $\MM$ and $\MM^*$ (map $0$ to $\tuple{1,2}$, and any other $n$ to $\tuple{n,n}$), as well as an $\NN$-definable isomorphism between $\NN$ and $\NN^*$ (map $n$ to $\tuple{n,n}$). Thus, we get a bi-interpretation between $\MM$ and $\NN$. But there is no $\MM$-definable bijection between $\MM$ and $\NN$, because, by quantifier elimination, any definable injection from $\MM$ to $\MM$ has the following form: an arbitrary permutation of a finite set (the set of parameters involved in the definition), and the identity on all other elements. 

The argument from the previous paragraph breaks down if in the definition of semantical tightness we only require an isomorphism instead of an $\MM$-definable isomorphism: indeed, any two bi-interpretable infinite sets must have the same cardinality and thus be isomorphic. It also breaks down if we only allow one-dimensional interpretations as in the rest of this paper.
\end{remark}

\subsection{Tight but not neat}\label{subsec:tight-not-neat}

This subsection is devoted to the proof of a less trivial separation, between the two syntactic notions of tightness and neatness. In fact, we prove that there are arbitrarily strong subtheories of $\PA$ that are tight but not neat. We do not know whether the theories we construct are semantically tight.

\begin{theorem}\label{thm:tight-not-neat}
    For every $n \ge 1$, there exists an r.e.~subtheory
    of $\PA$ that contains $\BS{n}+\exp$ but not $\IS{n}$
    and is tight but not neat.
\end{theorem}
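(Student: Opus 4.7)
The plan is to adapt the construction of $T_n$ from Section \ref{sec:proper-solid} to the $\BS{n}+\neg\IS{n}$ setting. I would first construct an auxiliary theory $\tn{n}^{\sharp}$ whose intended models are Paris--H\'ajek cuts $\mathcal{J} = \mathcal{H}_{n-1}(\KK)$ of pointwise $\Sigma_n$-definable models $\KK$ of $\IS{n-1}$, as reviewed at the end of Section \ref{sec:prelim}. Such a $\mathcal{J}$ satisfies $\BS{n}+\exp+\neg\IS{n}$ and has the standard cut $\Sigma_n$-definable. Using a $\Sigma_1$-flexible formula $\xi$ over $\IS{n}$ to force nonstandardness, I would axiomatize this construction as $\tn{n}^{\sharp}$ and, arguing as in Proposition \ref{prop:biint_truth}, show that $\tn{n}^{\sharp}$ is bi-interpretable with $\CT[\PA]$, hence tight (even solid by Corollaries \ref{cor:solid_ctn} and \ref{cor_biint_pres_solid}). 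The final theory is
\[TN_n := \mathrm{I}\Delta_0 + \exp + \{\IS{n} \to \IS{k} : k \in \N\} + \{\neg\IS{n} \to \psi : \psi \in \tn{n}^{\sharp}\},\]
an r.e.~subtheory of $\PA$ containing $\BS{n}+\exp$ but not $\IS{n}$. Tightness of $TN_n$ follows by a case analysis like in Theorem \ref{thm:proper-solid}: bi-interpretable $\LPA$-extensions must either both extend $\PA$ or both extend $\tn{n}^{\sharp}$ (mixed cases are ruled out by a $\PA$/$\tn{n}^{\sharp}$-analogue of Lemma \ref{lem:no-mix-ct-pa}), and in each case tightness forces equivalence.

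The heart of the argument, and the main obstacle, is exhibiting failure of neatness. Since both $\PA$ and $\tn{n}^{\sharp}$ are neat, the bare disjunctive construction does not by itself give non-neatness; some additional twist is needed to produce a one-sided (non-symmetric) retraction between two non-equivalent extensions of $TN_n$. My proposal is to enrich $\tn{n}^{\sharp}$ with an extra $\Sigma_1$-flexible formula $\eta(x)$ whose value is not pinned down by the axioms but can be twisted via round-trip interpretations through $\CT[\PA]$. Two $\LPA$-extensions $U$ and $V$ of $TN_n + \neg\IS{n}$ are then chosen to fix $\eta$ to different but suitably related $\Sigma_1$-values. Using the freedom of $\eta$, one constructs twisted versions $\hn{n}', \gn{n}'$ of the canonical interpretations so that the round-trip $\hn{n}' \gn{n}'$, applied in $U$, is isomorphic to the identity via (the canonical analogue of) the isomorphism $i_n$, while the reverse round-trip $\gn{n}' \hn{n}'$, applied in $V$, reassigns $\eta$ to a value not provably equivalent to the one prescribed by $V$. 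This yields $U$ as a retract of $V$ without bi-interpretation, with $U \nvdash V$ ensured by flexibility of $\eta$.

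The hard part is engineering the twisting of $\eta$ so that both the internal definability required for the interpretations and the asymmetry required to block bi-interpretation are simultaneously achieved, while preserving the tightness analysis. This will require the refined $(\Sigma_n,\Sigma_k)$-flexibility of Theorem \ref{tw_flex_main}\ref{it:tw_flex_main-2} together with careful tracking of how $\eta$'s value transports across the canonical isomorphisms $i_n, j_n$. If this coordination succeeds, the resulting theory $TN_n$ is tight but not neat, contains $\BS{n}+\exp$, and does not contain $\IS{n}$, as required.
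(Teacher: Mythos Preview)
Your central claim---that $\tn{n}^{\sharp}$ is bi-interpretable with $\CT[\PA]$, proved ``as in Proposition~\ref{prop:biint_truth}''---cannot hold. The isomorphism $i_n$ in Proposition~\ref{prop:biint_truth} exists because the model $\KK$ is pointwise $\Sigma_{n+1}$-definable, so each element can be sent to its counterpart via its least definition. A model $\mathcal{J}$ of $\BS{n}+\exp+\neg\IS{n}$ is not pointwise definable, and in fact Theorem~\ref{thm:gwphp} (the cardinality scheme) rules out \emph{any} $\mathcal{J}$-definable injection from $\mathcal{J}$ into a proper initial segment. Since the domain of your $\gn{n}$ is precisely such a segment, there is no $\mathcal{J}$-definable isomorphism $\mathcal{J}\to\mathcal{J}^{\gn{n}\hn{n}}$, and bi-interpretability fails. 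This also undermines your plan to rule out mixed cases via Lemma~\ref{lem:no-mix-ct-pa}.

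The paper's proof turns this obstruction into the engine of the argument. The auxiliary theory $\sn{n}$ only postulates \emph{elementary equivalence} $\psi^{\gn{n}\ihn{n}}\leftrightarrow\psi$ (not a definable isomorphism), and $\gn{n}$ interprets $\PA$ rather than $\CT[\PA]$. The asymmetry---$j_n$ exists but $i_n$ cannot---immediately makes $\PA$ a retract of $\sn{n}$ while $\sn{n}\not\subseteq\PA$, which is exactly non-neatness of $S_n=\sn{n}\oplus_{\IS{n}}\PA$; no extra flexible formula $\eta$ or twisting is needed. For tightness, the mixed case (one model satisfying $\sn{n}$, the other $\PA$) is again dispatched by the cardinality scheme: a bi-interpretation would yield a definable injection of the $\sn{n}$-model into its own proper cut $(\delta_n)$, a contradiction. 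Thus the key tool you are missing is Theorem~\ref{thm:gwphp}, which simultaneously blocks your proposed bi-interpretation, supplies non-neatness for free, and handles the mixed case in the tightness proof.
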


The overall structure of the argument is similar to that
of Section \ref{sec:proper-solid}, though the role of $\CT[\PA]$ is played by $\PA$, and pointwise definable
models of ${\IS{n}} + {\neg \BS{n+1}}$ are replaced by models
of ${\BS{n}} + {\neg \IS{n}}$. 

Fix $n \ge 1$. In analogy to the theories $T_n$ and $\tn{n}$
from the proof of Theorem \ref{thm:proper-solid} in Section \ref{sec:proper-solid}, we will use the symbol $S_n$ to denote the theory that will eventually witness Theorem \ref{thm:tight-not-neat}, and we will define an auxiliary theory $\sn{n}$.

In Section \ref{sec:proper-solid}, we had an interpretation
$\hn{n}$ of a model of ${\IS{n}} + {\neg \BS{n+1}}$ in $(\N, \Th{\N})$, and an inverse interpretation $\gn{n}$ of $(\N, \Th{\N})$ in that model. 
The theory $\tn{n}$ axiomatized many features of our particular model of ${\IS{n}} + {\neg \BS{n+1}}$, 
and $T_n$ said that we are either in a model of $\tn{n}$ or
in one of $\PA$. 
This time, $\hn{n}$ will be replaced by a parameter-free interpretation $\ihn{n}$ of a model of ${\BS{n}} + {\neg \IS{n}}$ in $\N$, and again there will be an inverse interpretation of $\N$ in our model. In fact, we will reuse the name $\gn{n}$ for that inverse interpretation, because it will essentially do the same job as before -- pick out the smallest definable cut of our model -- except that there will be no need to define the truth predicate. 
As before, $\sn{n}$ will axiomatize some properties of our model, 
and $S_n$ will say that we are either in a model of $\sn{n}$ or in one of $\PA$.

The interpretation $\ihn{n}$ describes the following process, as carried out in $\N$ (and routinely formalizable in $\PA$):

\begin{itemize}
    \item Consider a canonically defined binary tree whose
    paths correspond to complete consistent henkinized extensions of the theory ${\IS{n}} + {\neg \Con{\IS{n}}}$. \item Take the Henkin model $\mathcal{H}$ given by
    the leftmost path through that tree.
    \item Take the initial segment of $\mathcal{H}$
    generated by the first $\N$ iterations of the witness-bounding function for the universal $\Sigma_{n-1}$ formula (see discussion at the end of Section \ref{sec:prelim}) on the smallest proof of inconsistency in $\IS{n}$. For $n=1$, instead of the witness-bounding function consider the first $\N$ iterations of $\exp$.
\end{itemize}

\begin{figure}[htbp]

\begin{tikzpicture}
\draw[very thick, ] (3,0) -- (4.65,0);

\draw[very thick] (4.65,0) -- (10.8,0);

\draw[very thick, dashed] (10.8,0) -- (12.95,0);

\draw[very thick, ] (3, -0.1) -- (3, 0.1);
\draw[very thick, ] (3.3, -0.1) -- (3.3, 0.1);
\draw[very thick, ] (3.6, -0.1) -- (3.6, 0.1);

\draw (3, -0.4) node {\small{$\mathsf 0$}};
\draw (3.3, -0.4) node {\small{$\mathsf 1$}};
\draw (3.6, -0.4) node {\small{$\mathsf 2$}};
\draw (4.1, -0.4) node {\small{$\ldots$}};

\draw[very thick, ] (4.55, -0.25) .. controls (4.7,-0.15)  and (4.7,0.15) ..  (4.55, 0.25);

\draw (4.8, 0.5) node {$\mathbb{N}$};

\draw[very thick, ] (6.3, -0.1) -- (6.3, 0.1);

\draw[very thick, ] (10.65, -0.25) .. controls (10.8,-0.15)  and (10.8,0.15) ..  (10.65, 0.25);

\draw (11, 0.5) node {$\JJ$};

\draw[very thick] (12.85, -0.25) .. controls (13,-0.15)  and (13,0.15) ..  (12.85, 0.25);

\draw (13.2, 0.5) node {$\mathcal{H}$};

\end{tikzpicture}

\caption{Construction of the model $\N^{\ihn{n}}$ of $\sn{n}$. The solid horizontal line represents $\N^{\ihn{n}}$, which is a nonstandard $\Sigma_{n-1}$-elementary
initial segment $\JJ$ of the Henkin structure $\mathcal{H}$.
The dashed horizontal line represents the rest of $\mathcal{H}$.}

\label{fig:structure-j}

\end{figure}

This process is illustrated in Figure \ref{fig:structure-j}. As discussed at the end of Section \ref{sec:prelim},
it produces a $\Sigma_{n-1}$-elementary cut 
$\JJ$ of $\mathcal{H}$ that is necessarily a proper cut,
because $\mathcal{H}$ is nonstandard, and
in a model of $\IS{n}$ the witness-bounding function for 
a $\Sigma_{n-1}$ formula can be iterated an arbitrary number of times (and so can $\exp$ in a model of $\IS{1}$).
Thus, $\JJ$ is a model of ${\BS{n}}$.
Moreover, $\JJ$ is a model of $\neg \IS{n}$, because 
it is a nonstandard structure
in which the standard cut $\N$ is $\Sigma_n$-definable, say 
by the formula $\delta_n(x)$ expressing ``there exists an inconsistency proof for $\IS{n}$, and on that proof the witness-bounding function for the universal $\Sigma_{n-1}$ formula can be iterated $x$ times''. 

Thus, $\N$ is the smallest definable cut of $\JJ$, and it can be interpreted in $\JJ$ by the interpretation $\gn{n}$ in which the domain is defined by $\delta_n$
and the arithmetical operations are unchanged. 
As in the proof of Theorem \ref{thm:proper-solid}, there is an $\N$-definable isomorphism $j_n$ between $\ihn{n}\!\gn{n}$ and the identity interpretation of $\N$ in itself, namely the map from Lemma \ref{ultimate_dedekind}:
take $x \in \N$ to the $x$-th smallest element of $\JJ$.
Each of $\gn{n}, \ihn{n}, j_n$ is definable without parameters.

However, we no longer have a $\JJ$-definable isomorphism between  $\gn{n}\!\ihn{n}$ and the identity interpretation of $\JJ$ in itself. The reason is that $\JJ$ is a model
of ${\BS{n}} + {\exp} + {\neg\IS{n}}$, and the domain
of $\gn{n}$ is a proper cut in it. It is known that 
models of ${\BS{n}} + {\exp} + {\neg\IS{n}}$ cannot have 
a definable injective multifunction into a proper initial segment:
\begin{theorem}[\cite{kaye:theory-kappa-like}]\label{thm:gwphp}
    Let the cardinality scheme $\mathop{CARD}$ say that  
    no formula defines an injective multifunction from
    the universe into $[0,x]$ for any number $x$.
    Then, for each $n \ge 1$, it holds that ${\BS{n}} + {\exp} + {\neg\IS{n}} \vdash CARD$.
\end{theorem}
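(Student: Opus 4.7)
Let $\MM \vDash {\BS{n}} + {\exp} + {\neg\IS{n}}$, and suppose towards contradiction that $\varphi(y,z)$ defines an injective multifunction $F$ from the universe $M$ into $[0,x]$. The goal is to extract a uniform bound on $y \in M$, contradicting the fact that $M$ has no greatest element.

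The easy case is when $\varphi$ is $\Sigma_n$. Here I would apply the strong-collection form of $\BS{n}$ to $\varphi(y,z)$, obtaining a bound $b$ such that for every $z \le x$, if some $y$ satisfies $\varphi(y,z)$ then some $y \le b$ does. Injectivity of $F$ as a multifunction makes such a $y$ unique, and totality of $F$ (every $y \in M$ has some $z \le x$ with $\varphi(y,z)$) then forces $M \subseteq [0,b]$, yielding the desired contradiction. This handles $\Sigma_n$-CARD already in $\BS{n}$; the extra hypotheses $\exp$ and $\neg\IS{n}$ are not used here.

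The main obstacle is reducing an arbitrary defining formula $\varphi$ to the $\Sigma_n$ case. The plan is to code the slices of $F$ into a bounded set. Using $\neg\IS{n}$ together with $\BS{n}$, first fix a proper $\Sigma_n$-definable cut $I \subsetneq M$ (for instance, the cut obtained by iterating the witness-bounding function for the universal $\Sigma_{n-1}$ formula, as in the construction recalled at the end of Section \ref{sec:prelim}). For each $y \in M$, the slice $S_y := \{z \le x : \varphi(y,z)\}$ is a subset of the bounded set $[0,x]$; by $\exp$ its Ackermann code $c_y$ is a number below $2^{x+1}$. Injectivity of $F$ makes $y \mapsto c_y$ an injection into $[0,2^{x+1}]$. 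The task is to show that this coding map is itself $\Sigma_n$-definable, after which Stage 1 applies and forces $M$ to be bounded.

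The $\Sigma_n$-definability of $y \mapsto c_y$ is the technical heart of the argument and the step I expect to be hardest. It hinges on the structural features of models of ${\BS{n}} + {\neg\IS{n}}$: the proper $\Sigma_n$-cut $I$ provides a ``small'' domain over which arbitrary-complexity $\LPA$-formulas can be evaluated via the partial $\Sigma_n$ satisfaction predicate $\Sat{\Sigma_n}$, so that standard formulas of depth bounded along $I$ can be uniformly coded as $\Sigma_n$ formulas. This is the trick that lets one absorb the complexity of $\varphi$ into a single $\Sigma_n$ definition of the coding. The fine details of this absorption are the content of Kaye's argument in \cite{kaye:theory-kappa-like}, and I would follow that reference for the verification.
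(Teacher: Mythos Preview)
The paper does not prove this theorem; it is cited from \cite{kaye:theory-kappa-like} and used as a black box. So there is no paper proof to compare against.

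Your Step 1 (the $\Sigma_n$ case via strong $\Sigma_n$-collection) is correct and standard.

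Your Steps 2--3, however, do not constitute a proof, and the specific mechanism you sketch has genuine problems:
\begin{itemize}
    \item You do not show that the code $c_y$ exists for each $y$. A bounded definable subset of a model of $\BS{n} + \exp$ need not be coded---for instance, the $\Sigma_n$-definable proper cut you yourself invoke is a bounded set that is not coded---and you give no argument ruling this out for the slices $S_y$ of an injective multifunction.
    \item Even granting existence, the relation ``$c$ codes $\{z \le x : \varphi(y,z)\}$'' is $\forall z \!\le\! x\, (z \in_A c \leftrightarrow \varphi(y,z))$, which has essentially the same complexity as $\varphi$. Your appeal to $\Sat{\Sigma_n}$ over the cut $I$ does not help: $\Sat{\Sigma_n}$ only evaluates $\Sigma_n$ formulas, and you give no mechanism by which the cut lets you absorb the higher complexity of $\varphi$ into a $\Sigma_n$ definition.
\end{itemize}

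You acknowledge this gap and defer to Kaye for the ``fine details'', which in effect matches what the paper does. But your outline should not be read as a sketch of Kaye's argument: the reduce-to-$\Sigma_n$-via-coding route does not go through as you describe it. Kaye's approach (as the title of the cited paper suggests) is model-theoretic, proceeding via the construction of $\kappa$-like elementary extensions rather than a syntactic complexity reduction.
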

On the other hand, the structure produced by $\gn{n}\!\ihn{n}$
is in fact isomorphic to $\JJ$, even though $\JJ$ does not see the isomorphism. In particular, the two structures
are elementarily equivalent.

We let $\sn{n}$ axiomatize the properties of $\JJ$ discussed above. The axioms of $\sn{n}$ are:
\begin{enumerate}[(i)]
    \item\label{it:s0n-1} ${\BS{n}} + {\exp} + {\neg\IS{n}}$,
    \item\label{it:s0n-2} ``$\delta_n$ defines a cut which is the smallest definable cut'',
    \item\label{it:s0n-4} $\psi^{\gn{n}\!\ihn{n}} \leftrightarrow \psi$, for each sentence $\psi$,
    \item\label{it:s0n-5} $\gn{n}\vDash ``j_n \colon \mathsf{id} \to \ihn{n}\!\gn{n}$ is an isomorphism$"$.
\end{enumerate}

We let $S_n$ be
${\sn{n}} \oplus_{\IS{n}} {\PA}$. 
Note that it follows from axioms \ref{it:s0n-2} of $\sn{n}$ 
that $\gn{n}$ is an
interpretation of $\PA$ in $\sn{n}$.

\begin{lemma}\label{lem:sn-proper-subtheory}
 The theory $S_n$ contains $\BS{n} + \exp$ but not $\IS{n}$. Thus, it is a proper subtheory of $\PA$.  
\end{lemma}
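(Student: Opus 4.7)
The plan is to verify the three separate claims directly from the definition $S_n = {\sn{n}} \oplus_{\IS{n}} {\PA}$, which unfolded says that the axioms of $S_n$ are exactly the sentences ${\IS{n}} \rightarrow \psi$ for $\psi \in \PA$ together with the sentences ${\neg \IS{n}} \rightarrow \psi$ for $\psi \in \sn{n}$.

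First I would record the consistency of $\sn{n}$, which is what lets the whole construction get off the ground. The intended model $\JJ$ (produced by applying $\ihn{n}$ in $\N$, as illustrated in Figure \ref{fig:structure-j}) satisfies all four groups of axioms \ref{it:s0n-1}--\ref{it:s0n-5}: axiom \ref{it:s0n-1} by the general construction of $\Sigma_{n-1}$-elementary cuts closed under iterated $\Sigma_{n-1}$-witness bounding, axiom \ref{it:s0n-2} by the definability of $\N$ inside $\JJ$ via $\delta_n$ discussed just above, axiom \ref{it:s0n-4} since the structure $\N^{\gn{n}\!\ihn{n}}$ produced by $\JJ$ is isomorphic (in the metatheory) to $\JJ$, and axiom \ref{it:s0n-5} because $j_n$ is $\N$-definable as the enumeration of $\JJ$ in order type. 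Hence $\JJ \vDash \sn{n}$.

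Next I would check that $S_n \vdash {\BS{n}} + \exp$. Let $\MM \vDash S_n$. If $\MM \vDash \IS{n}$, then the axioms ${\IS{n}} \rightarrow \psi$ of $S_n$ give $\MM \vDash \PA$, so in particular $\MM \vDash {\BS{n}} + \exp$. If $\MM \vDash \neg \IS{n}$, then the axioms ${\neg \IS{n}} \rightarrow \psi$ yield $\MM \vDash \sn{n}$, and axiom \ref{it:s0n-1} of $\sn{n}$ already contains ${\BS{n}} + \exp$.

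For the remaining two points, the model $\JJ$ constructed above is again the key witness. Since $\JJ \vDash \sn{n}$ and in particular $\JJ \vDash \neg \IS{n}$, every axiom of $S_n$ of the form ${\neg \IS{n}} \rightarrow \psi$ holds in $\JJ$ because $\psi \in \sn{n}$ holds, and every axiom of the form ${\IS{n}} \rightarrow \psi$ holds vacuously. Thus $\JJ \vDash S_n$ and $\JJ \nvDash \IS{n}$, so $S_n \nvdash \IS{n}$; a fortiori $S_n$ does not contain $\PA$. Conversely, $\PA$ proves $\IS{n}$, so all axioms ${\neg \IS{n}} \rightarrow \psi$ of $S_n$ are trivially $\PA$-provable, while the axioms ${\IS{n}} \rightarrow \psi$ for $\psi \in \PA$ are obviously $\PA$-provable. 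Hence $\PA \vdash S_n$, and combining with $S_n \nvdash \IS{n}$ we conclude that $S_n$ is a proper subtheory of $\PA$ containing $\BS{n} + \exp$ but not $\IS{n}$. There is no real obstacle here; the verification is a direct unfolding of the case analysis defining $\oplus_{\IS{n}}$, and the only nontrivial ingredient already in place is the existence of $\JJ$, which is guaranteed by the discussion preceding the lemma.
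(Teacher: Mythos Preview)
Your proposal is correct and follows essentially the same approach as the paper: verify that $\JJ$ witnesses the consistency of $\sn{n}$, observe that every model of $S_n$ satisfies either $\PA$ or $\sn{n}$ and in both cases ${\BS{n}} + \exp$, and use $\JJ$ as a model of $S_n$ failing $\IS{n}$. The paper's own proof is a one-line reference to the analogous Lemma~\ref{lem:tn-proper-subtheory}; you have simply unpacked the case analysis more explicitly.
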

\begin{proof}
The argument is analogous to the one for Lemma \ref{lem:tn-proper-subtheory}: by the construction of the model $\JJ$ described above, $\sn{n}$ is consistent and contains $\BS{n} + \exp$ but contradicts $\IS{n}$.
\end{proof}

\begin{lemma}\label{lem:sn-not-neat}
     The theory $S_n$ is not neat.
\end{lemma}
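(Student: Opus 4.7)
The plan is to witness the failure of neatness by exhibiting two extensions of $S_n$ in $\LPA$ such that one is a retract of the other but does not prove it. Specifically, I take $U = \PA$ and $V = \sn{n}$. Both extend $S_n$: clearly $\PA$ does, and $\sn{n}$ does because it proves $\neg\IS{n}$ by axiom \ref{it:s0n-1}, so each axiom $\neg\IS{n} \rightarrow \varphi$ of $S_n$ yields $\varphi \in \sn{n}$, and each axiom $\IS{n}\rightarrow \psi$ of $S_n$ is vacuous. The non-derivability $U \nvdash V$ is immediate: $\sn{n}\vdash \neg\IS{n}$ while $\PA \vdash \IS{n}$.

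The bulk of the proof is to show that $\PA$ is a retract of $\sn{n}$, using $\mathsf{M}_1 = \gn{n}$ for the interpretation $\sn{n}\rhd \PA$ and $\mathsf{M}_2 = \ihn{n}$ for the interpretation $\PA \rhd \sn{n}$. The first interpretation is essentially the content of axioms \ref{it:s0n-1} and \ref{it:s0n-2}. The round-trip identity $\id_\PA \cong \ihn{n}\!\gn{n}$ is witnessed by $j_n$: in any $\KK \vDash \PA$, the interpretation $\ihn{n}$ produces a model $\JJ$, and then $\gn{n}$ inside $\JJ$ picks out the cut defined by $\delta_n$, which by construction equals $j_n(\KK)$, so $j_n\colon \KK \to \KK^{\ihn{n}\!\gn{n}}$ is a $\PA$-provable isomorphism. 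This is the crucial asymmetric feature: although $\sn{n}$ fails to see an isomorphism $\id \cong \gn{n}\!\ihn{n}$ (which is forbidden by Theorem \ref{thm:gwphp}), the opposite round-trip $\ihn{n}\!\gn{n}$ is cleanly witnessed inside $\PA$.

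What remains is to check that $\ihn{n}$ really is an interpretation of the entire theory $\sn{n}$ in $\PA$. Axiom \ref{it:s0n-1} holds because, working in $\PA$, the Henkin model $\mathcal{H}$ of ${\IS{n}}+{\neg\Con{\IS{n}}}$ exists (using $\PA\vdash\Con{\IS{n}}$), its segment $\JJ$ generated by the prescribed iterations is $\Sigma_{n-1}$-elementary in $\mathcal{H}$ and thus models $\BS{n}+\exp$, while $\delta_n$ exhibits a proper $\Sigma_n$-definable cut and yields $\neg\IS{n}$. For axiom \ref{it:s0n-2}, any $\JJ$-definable cut contains $0^\JJ$ and is closed under successor, so by full induction in the ground model $\KK$ (schematic in the defining formula) it contains $j_n(k)$ for every $k \in \KK$, that is, all of $\delta_n$. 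Axioms \ref{it:s0n-4} and \ref{it:s0n-5}, when relativized via $\ihn{n}$, reduce to consequences of $\id_\PA \cong \ihn{n}\!\gn{n}$: for example, the schema $\psi^{\gn{n}\!\ihn{n}}\leftrightarrow \psi$ becomes $\psi^{\ihn{n}\!\gn{n}\!\ihn{n}}\leftrightarrow \psi^{\ihn{n}}$, which follows because the isomorphism $j_n$ lifts through $\ihn{n}$ to give $\ihn{n}\cong\ihn{n}\!\gn{n}\!\ihn{n}$ in $\PA$; axiom \ref{it:s0n-5} is traced analogously.

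The main obstacle is the schematic verification of axiom \ref{it:s0n-2}, since it ranges over all $\JJ$-definable cuts and requires an induction in $\PA$ that is uniform in the defining formula. Everything else is either immediate from the construction or falls out of the single isomorphism $j_n$. Having all four axioms verified, $(\gn{n},\ihn{n})$ forms a retract of $\PA$ into $\sn{n}$, so $\PA$ is a retract of $\sn{n}$ in the sense of theories, contradicting neatness of $S_n$.
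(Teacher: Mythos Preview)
Your proof is correct and follows essentially the same approach as the paper: take $U=\PA$ and $V=\sn{n}$, exhibit the retraction of $\PA$ in $\sn{n}$ via $\ihn{n}$ and $\gn{n}$ with $j_n$ witnessing $\ihn{n}\!\gn{n}\cong\id_{\PA}$, and observe $\PA\nvdash\sn{n}$. The paper states in one sentence that $\ihn{n}$ interprets $\sn{n}$ in $\PA$ and that the round-trip is $\PA$-provably the identity, whereas you spell out the axiom-by-axiom verification; this extra detail is sound and does not diverge from the intended argument.
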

\begin{proof}
Consider $U = \PA$ and $V = \sn{n}$. 
Note that both these theories extend $S_n$. 
Moreover, 
$\ihn{n}$ is an interpretation of $\sn{n}$ in $\PA$, 
and $\ihn{n}\!\gn{n}$ is an interpretation of $\PA$ in $\PA$. By design, the latter interpretation is $\PA$-provably isomorphic to the identity interpretation: 
$\ihn{n}$ is the shortest
initial segment of $\mathcal{H}$ which contains all the finite (in the sense of the ground model) iterations of the 
witness-bounding function for the universal $\Sigma_{n-1}$ formula on the smallest witness to $\neg \Con{\IS{n}}$, 
and $\gn{n}$ isolates precisely the set of those numbers $a$ for which that function
can be iterated $a$-times. 
Thus, $\PA$ is a retract 
of $\sn{n}$.
Clearly, however, $\sn{n}$ is not a subtheory of $\PA$.
\end{proof}

\begin{lemma}\label{lem:s0n-neat}
 The theory $\sn{n}$ is neat.  
\end{lemma}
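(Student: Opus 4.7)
The plan is to exploit the retraction to set up a three-model chain to which Lemma \ref{lem:indfact} applies with $m = 0$. Assume (by the standard reduction to complete extensions) that both $U$ and $V$ are complete, and fix any $\MM \vDash U$. Write $\mathsf{V}\colon U \rhd V$ and $\mathsf{U}\colon V \rhd U$ for the interpretations witnessing the retract, so that there is an $\MM$-definable isomorphism between $\MM$ and the round-trip structure $(\MM^{\mathsf{V}})^{\mathsf{U}}$. Put $\MM_1 := \MM$, $\MM_2 := \MM^{\mathsf{V}}$, $\MM_3 := (\MM^{\mathsf{V}})^{\mathsf{U}}$, and take $\mathsf{N}_i := \gn{n}$ for each $i \in \{1,2,3\}$.

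Each $\MM_i$ satisfies $\sn{n}$ and hence $\PAm$. By axioms (ii)--(iv) of $\sn{n}$, $\gn{n}$ is an interpretation of $\PA = \CT^0[\PA]$ in each $\MM_i$ with domain equal to the shortest definable cut $\delta_n^{\MM_i}$; and because $\delta_n$ is an $\LPA$-formula preserved by isomorphism, the retract isomorphism restricts to an $\MM_1$-definable isomorphism between $\MM_1^{\gn{n}}$ and $\MM_3^{\gn{n}}$. All the hypotheses of Lemma \ref{lem:indfact} with $m = 0$ are therefore in force, and the lemma produces an $\MM$-definable isomorphism between the $\PA$-models $\MM^{\gn{n}}$ and $(\MM^{\mathsf{V}})^{\gn{n}}$.

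Now let $W_X := \{\phi \in \LPA : X \vdash \phi^{\gn{n}}\}$ denote the arithmetical projection of $X \in \{U,V\}$. Both $W_U$ and $W_V$ are complete extensions of $\PA$ (using the completeness of $U, V$), with $\MM^{\gn{n}} \vDash W_U$ and $(\MM^{\mathsf{V}})^{\gn{n}} \vDash W_V$. The isomorphism just established forces $W_U = W_V$. To transfer this back to the ambient theories, I use axiom (iii) of $\sn{n}$, which shows that any $X \supseteq \sn{n}$ is determined by $W_X$: a sentence $\psi$ belongs to $X$ iff its $\gn{n}$--$\ihn{n}$-translate does, and this latter condition can be restated in terms of membership in $W_X$. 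Hence $W_U = W_V$ lifts to $U = V$, giving $U \vdash V$. The main technical point is the simultaneous verification of the hypotheses of Lemma \ref{lem:indfact} for all three models of the chain, which works cleanly because each of them satisfies $\sn{n}$, precisely the theory that axiomatizes the required properties of $\gn{n}$.
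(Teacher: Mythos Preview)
Your proof is correct and follows essentially the same approach as the paper: both arguments apply Lemma~\ref{lem:indfact} with $m=0$ to the chain $\MM_1 \rhd \MM_2 \rhd \MM_3$ to obtain an isomorphism between $\MM_1^{\gn{n}}$ and $\MM_2^{\gn{n}}$, and then use axiom~(iii) of $\sn{n}$ (the scheme $\psi \leftrightarrow \psi^{\gn{n}\ihn{n}}$) to lift this to $\MM_1 \equiv \MM_2$. Your detour through the projected complete theories $W_U, W_V$ is merely a syntactic rephrasing of the paper's direct model-theoretic step via the structures $\MM'_i = (\MM_i^{\gn{n}})^{\ihn{n}}$ and the equivalences $\MM_i \equiv \MM'_i$.
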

\begin{proof}
It is enough to prove that if $\MM_1 \rhd \MM_2 \rhd \MM_3$ are models of $\sn{n}$ and there is an $\MM_1$-definable isomorphism from $\MM_1$ onto $\MM_3$, then $\MM_1 \equiv \MM_2$. So, let $\MM_1, \MM_2, \MM_3$ be as described.

For each $i \in \{1,2,3\}$, let $\NN_i$ be the model
of $\PA$ obtained by applying the interpretation $\gn{n}$
in $\MM_i$, and let $\MM'_i$ be the model of $\sn{n}$
obtained by applying $\ihn{n}$ in $\NN_i$. 
See Figure \ref{fig:s0n-neat}.
Note that
the domain of each $\NN_i$ is the smallest definable
cut of $\MM_i$ and that there is an $\MM_1$-definable isomorphism from $\NN_1$ onto $\NN_3$ (induced by the $\MM_1$-definable isomorphism between $\MM_1$ and $\MM_3$).

\begin{figure}[htbp]\label{fig:s0n-neat}

\begin{center}
\begin{tikzcd}[column sep = 4.5em, row sep = 4.5 em ]
 \MM_1 
 \arrow[r, symbol = \rhd]
 \arrow[rr, bend left = 30]
 \arrow[d, symbol = \rhd] 
 \arrow[dd, bend right = 40, "\rotatebox{90}{$\equiv$}"] & 
 \MM_2 
 \arrow[r, symbol = \rhd]
 \arrow[d, symbol = \rhd] 
 \arrow[dd, bend right = 40, "\rotatebox{90}{$\equiv$}"] & 
 \MM_3 
 \arrow[d, symbol = \rhd]
 \\
 \NN_1
 \arrow[r, bend right = 40, dashed]
 \arrow[rr, bend left = 20, dashed]
 \arrow[d, symbol = \rhd] & 
 \NN_2
 \arrow[d, symbol = \rhd] & 
 \NN_3
 \arrow[d, symbol = \rhd] \\
 \MM'_1 
 \arrow[r, bend right = 40, dashed] & 
 \MM'_2 &
 \MM'_3
\end{tikzcd}
\end{center}

\caption{The proof of Lemma \ref{lem:s0n-neat}. 
The horizontal solid arrow represents an isomorphism given directly by the assumptions about $\MM_1, \MM_2, \MM_3$, and the dashed arrows represent isomorphisms shown to exist during the argument. The vertical solid arrows stand for elementary equivalences, which together with the isomorphisms
let us conclude that $\MM_1$ is elementarily equivalent to $\MM_2$.}

\end{figure}

By axioms \ref{it:s0n-2} of $\sn{n}$ and Lemma \ref{lem:indfact} we have an ($\MM_1$-definable) isomorphism between $\NN_1$ and $\NN_2$. This clearly gives rise to an ($\MM_1$-definable) isomorphism between $\MM'_1$ and $\MM'_2$.

By axioms \ref{it:s0n-4} of $\sn{n}$, we know that
$\MM_1 \equiv \MM'_1$ and $\MM_2 \equiv \MM'_2$.
Since $\MM'_1$ and $\MM'_2$ are isomorphic,
this implies that $\MM_1 \equiv \MM_2$.
\end{proof}

\begin{proof}[Proof of Theorem \ref{thm:tight-not-neat}]

We have already shown in Lemmas \ref{lem:sn-proper-subtheory}
and \ref{lem:sn-not-neat} that $S_n$ is a subtheory
of $\PA$ containing $\BS{n} + \exp$ but not $\IS{n}$,
and that $S_n$ is not neat. Clearly, $S_n$ is an r.e.~theory,
so it remains to prove that it is tight.

It is enough to show that if $\MM_1$ is a model of $S_n$ and $(\mathsf{M}_2,\mathsf{M}_1)$ is a bi-interpretation in $\MM_1$,
then $\MM_1 \equiv \MM_1^{\mathsf{M}_2}$. Put $\MM_2 = \MM_1^{\mathsf{M}_2}.$ 

By the definition of $S_n$,
each $\MM_i$ satisfies either $\PA$ or $\sn{n}$. 
If $\MM_1$ and $\MM_2$ both satisfy $\PA$ or both satisfy $\sn{n}$, then $\MM_1 \equiv \MM_2$ follows from
the solidity of $\PA$ or the proof of Lemma \ref{lem:s0n-neat}, respectively.

The remaining case is that exactly one of $\MM_1, \MM_2$ satisfies $\PA$. Assume w.l.o.g.~that $\MM_1 \vDash \sn{n}$
and $\MM_2 \vDash \PA$.
We will show that this leads to a contradiction,
which will complete the proof of the theorem.

Let $\MM_3 = \MM_2^{\mathsf{M}_1}$ be the structure interpreted in $\MM_2$
which is $\MM_1$-definably isomorphic to $\MM_1$.
That isomorphism between $\MM_1$ and $\MM_3$,
say $f_1$, may also be viewed as an
$\MM_1$-definable injective multifunction $\widehat{f_1}$
from $\MM_1$ into $\MM_2$.

Since $\MM_2$ satisfies all of $\PA$ and interprets the arithmetical structure on the initial segment $(\delta_n)^{\MM_3}$ of $\MM_3$, 
Lemma \ref{ultimate_dedekind} tells us that 
there is an $\MM_2$-definable (hence $\MM_1$-definable) embedding $f_2$ from $\MM_2$ into $(\delta_n)^{\MM_3}$. However, $(f_1)^{-1}$ restricted to $(\delta_n)^{\MM_3}$ is
an $\MM_1$-definable isomorphism between $(\delta_n)^{\MM_3}$ and $(\delta_n)^{\MM_1}$. So, $(f_1)^{-1} \circ f_2 \circ \widehat{f_1}$ is an $\MM_1$-definable injective multifunction from $\MM_1$ into $(\delta_n)^{\MM_1}$, where $(\delta_n)^{\MM_1}$ is a proper initial segment of $\MM_1$.
This contradicts Theorem \ref{thm:gwphp}.
\end{proof}

\subsection{Tight but neither neat nor semantically tight}\label{subsec:pam}
In this subsection, we aim to define a theory that separates tightness from neatness and for which we also know that it is not semantically tight. We are able to find a sequential theory of this kind, but we do not know whether such theories can have arbitrary arithmetical strength.

Below, $\mathbb{Z}[X]$ denotes the ring of polynomials over $\mathbb{Z}$, which we see as a model for $\LPA$, 
with the ordering determined by making $X$ greater than all the integers. 
We write $(\mathbb{Z}[X])_{\ge 0}$ for the nonnegative part
of $\mathbb{Z}[X]$, which is a model of $\PAm$.

\begin{lemma}[
\cite{elv:completions}]\label{lem:visser_pam}
The structure $((\mathbb{Z}[X])_{\ge 0}, X)$ is 
parameter-free bi-interpretable  with $\mathbb{N}$. 
As a consequence, $(\mathbb{Z}[X])_{\ge 0}$ 
is bi-interpretable with $\mathbb{N}$; but it is not  parameter-free bi-interpretable with $\mathbb{N}$.
\end{lemma}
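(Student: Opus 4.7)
The plan is to handle the three claims in sequence.

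For the parameter-free bi-interpretation in the first claim, I would use a standard arithmetization of polynomials. To interpret $((\mathbb{Z}[X])_{\geq 0}, X)$ in $\N$ without parameters, I would code a polynomial $p(X) = \sum_{i \leq n} a_i X^i$ as the G\"odel number of the sequence $\langle a_0, \ldots, a_n \rangle$ of its coefficients (with each integer $a_i$ encoded by a pair sign/absolute value). Polynomial addition, multiplication, and the ordering determined by the leading coefficient are all computable and therefore arithmetically definable in $\N$, and the named constant $X$ is realized by the fixed code of $\langle 0, 1 \rangle$. For the converse interpretation, the parameter-free definable set $\{y : y < X\}$ inside $((\mathbb{Z}[X])_{\geq 0}, X)$ consists exactly of the nonnegative integer constants and, with the inherited arithmetical operations, is a copy of $\N$. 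Bi-interpretability then amounts to checking that both compositions are isomorphic to the identity without parameters: the composition starting in $\N$ sends $n$ to the code of $\langle n\rangle$, which is a definable function on $\N$; and the composition starting in $((\mathbb{Z}[X])_{\geq 0}, X)$ sends a polynomial $p$ to the code of its coefficient sequence, which is definable inside $((\mathbb{Z}[X])_{\geq 0}, X)$ by iterated Euclidean division by $X$, available thanks to the named constant.

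The second claim is immediate from the first: erasing $X$ from the signature turns those parameter-free interpretations into parametric interpretations with $X$ as their single parameter, yielding parametric bi-interpretability of $(\mathbb{Z}[X])_{\geq 0}$ and $\N$.

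For the third claim I plan to invoke Remark~\ref{rem:biint_iso_aut}, which says that parameter-free bi-interpretability preserves automorphism groups. Since $\operatorname{Aut}(\N)$ is trivial, it suffices to exhibit a nontrivial automorphism of $(\mathbb{Z}[X])_{\geq 0}$. The natural candidate is the shift $\sigma \colon p(X) \mapsto p(X+1)$; it is a ring homomorphism by construction, and since the substitution $X \mapsto X+1$ leaves leading coefficients unchanged and the ordering of $\mathbb{Z}[X]$ is governed by leading coefficients, $\sigma$ preserves the order and in particular the nonnegativity predicate. Injectivity is clear, and for surjectivity one observes that any $q \in (\mathbb{Z}[X])_{\geq 0}$ admits $q(X-1)$ (again nonneg, since its leading coefficient is the same) as preimage under $\sigma$. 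The main point of care is the order-preservation verification across the various cases of the lexicographic ordering, but this is entirely elementary. With $\sigma$ nontrivial (it sends $X$ to $X+1$), the automorphism groups of $\N$ and $(\mathbb{Z}[X])_{\geq 0}$ cannot be isomorphic, and parameter-free bi-interpretability between them is ruled out.
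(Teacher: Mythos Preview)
Your argument has a genuine gap in the second step. You claim that the parameter-free definable set $\{y : y < X\}$ inside $((\mathbb{Z}[X])_{\geq 0}, X)$ consists exactly of the nonnegative integer constants, but this is false: the elements $X-1, X-2, X-3, \ldots$ all lie in $(\mathbb{Z}[X])_{\geq 0}$ (their leading coefficient is $1$) and all satisfy $y < X$, yet none of them is a nonnegative integer. So $\{y : y < X\}$ is not a copy of $\N$, and your interpretation of $\N$ in $((\mathbb{Z}[X])_{\geq 0}, X)$ does not work as stated. The paper handles this by using instead the formula $\delta(x)$ saying ``every $y \le x$ is either even or odd'': no element of the form $X+k$ with $k \in \mathbb{Z}$ is divisible by $2$ or of the form $2q+1$ in $(\mathbb{Z}[X])_{\geq 0}$, and such elements are downwards cofinal above $\N$, so $\delta$ defines precisely the standard integers. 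Note also that this $\delta$ does not mention $X$, which is convenient for the second claim.

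A smaller wrinkle: even once the copy of $\N$ is correctly identified, ``iterated Euclidean division by $X$'' is not quite the right description of coefficient extraction, since the Euclidean remainder of, say, $X-3$ upon division by $X$ is $X-3$ itself rather than the constant term $-3$. The paper instead defines the inverse map, taking a coded coefficient sequence $(a_0,\ldots,a_n)$ in $\mathsf{N}\mathsf{Z}$ to $\sum_i a_i X^i$, appealing to the sequentiality of $\PA^-$ to form finite sums and products of standard length; this avoids having to reason about remainders in the semiring. Your treatment of the interpretation of $((\mathbb{Z}[X])_{\geq 0},X)$ in $\N$ and the automorphism argument for the final claim are fine and agree with the paper's.
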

\begin{proof}
$(\mathbb{Z}[X])_{\ge 0}$ is clearly a computable structure, hence it is arithmetically definable, 
and we can fix a (parameter-free) interpretation $\mathsf{Z}$ 
of a copy of $((\mathbb{Z}[X])_{\ge 0}, X)$ in the standard model $\N$. 
To be more specific, we represent polynomials from
$((\mathbb{Z}[X])_{\ge 0}, X)$ as (natural numbers coding) finite sequences of integers. 

This provides us with one interpretation needed for the bi-interpretability. To define the other one, observe that $\mathbb{N}$, the standard cut, is parameter-free definable in $(\mathbb{Z}[X])_{\ge 0}$.
Namely, let $\delta(x)$ say that all numbers smaller or equal to $x$ are either even or odd.
Since elements of the form $X+k$, where $k\in\mathbb{Z}$,
are downwards cofinal over $\mathbb{N}$ in $(\mathbb{Z}[X])_{\ge 0}$, and no such element is divisible by $2$, only the standard integers satisfy $\delta(x)$ in $(\mathbb{Z}[X])_{\ge 0}$.
This 
gives rise to a parameter-free interpretation of $\mathbb{N}$ in $((\mathbb{Z}[X])_{\ge 0}, X)$ 
(in fact, in $(\mathbb{Z}[X])_{\ge 0}$), 
which we will denote by $\mathsf{N}$.

Now, we show that there is a parameter-free definable isomorphism between the identity interpretation
on $((\mathbb{Z}[X])_{\ge 0}, X)$ and $\mathsf{NZ}$. This crucially depends on the fact that $\PAm$ is sequential,
so internally in $\PAm$ we have a notion of finite sequence
that is well-behaved for sequences of standard 
length. Consequently, by mimicking the usual recursive definitions, we can define such notions as (standard) finite sums and finite products. In particular, there is a $((\mathbb{Z}[X])_{\ge 0}, X)$-definable function with domain $\mathsf{N}\mathsf{Z}$ which, given a coded finite sequence $a=(a_0,\ldots, a_n)\in \mathsf{NZ}$, 
returns $a_nX^n + a_{n-1}X^{n-1}+\ldots + a_1X + a_0$; the definition of the function needs no parameters beyond $X$ itself, 
which is named by a constant in $((\mathbb{Z}[X])_{\ge 0}, X)$. The inverse of this function is our $((\mathbb{Z}[X])_{\ge 0}, X)$-definable isomorphism between $\mathsf{id}$ and $\mathsf{N}\mathsf{Z}$. The $\mathbb{N}$-definable isomorphism $j$ between $\mathsf{id}$ and $\mathsf{Z}\mathsf{N}$ is the usual map from 
Section \ref{subsec:formalized-categoricity}.

Thus, $\N$ is parameter-free bi-interpretable with $((\mathbb{Z}[X])_{\ge 0}, X)$, 
which means that it is also bi-interpretable with $(\mathbb{Z}[X])_{\ge 0}$. To prove that the bi-interpretability with $(\mathbb{Z}[X])_{\ge 0}$
requires parameters, it is enough to observe that $(\mathbb{Z}[X])_{\ge 0}$ carries a non-trivial automorphism:
namely, the semiring homomorphism generated by $X\mapsto X+1$, whose inverse is given by $X\mapsto X-1$.
On the other hand, $\N$ has no automorphisms other than the 
identity, and as mentioned in Section \ref{sec:prelim} (see the final remark in the paragraph ``Retractions and bi-interpretations of structures''),
structures that are parameter-free bi-interpretable have isomorphic automorphism groups. 
\end{proof}

In the remainder of this subsection, we will continue
to use the notation $\delta(x)$, $j$, $\mathsf{N}$, $\mathsf{Z}$ for the formulas resp.~interpretations thus denoted in the proof of Lemma 
\ref{lem:visser_pam}. 
We let $\iota_z$ stand for 
the $(\mathbb{Z}[X])_{\ge 0}$-definable map that, given
a parameter $z$ and a sequence $(a_0,\ldots,a_n)$ in $\mathsf{NZ}$, 
outputs $a_nz^n + a_{n-1}z^{n-1}+\ldots + a_1z + a_0$.
Thus, $(\iota_X)^{-1}$ is an isomorphism
between $((\mathbb{Z}[X])_{\ge 0}, X)$ and $((\mathbb{Z}[X])_{\ge 0}, X)^{\mathsf{NZ}}$.

We also let $h_z$ be the $(\mathbb{Z}[X])_{\ge 0}$-definable operation that maps the parameter $z$ to $z+1$ and extends to values of polynomials in $z$ in the obvious way. More precisely: $h_z$ takes $p\in(\mathbb{Z}[X])_{\ge 0}$ and searches for $a\in \mathsf{NZ}$ such that $p = \iota_z(a)$. If such an $a$ does not exist, the function is undefined. If it does, then $h_z(p):= \iota_{z+1}(a)$. 
Note that in general, $h_z$ is only a partial function -- for instance, the domain of $h_{X^2}$ is $(\mathbb{Z}[X^2])_{\ge 0}$ rather than all of $(\mathbb{Z}[X])_{\ge 0}$ --
but $h_X$ is an automorphism of $(\mathbb{Z}[X])_{\ge 0}$.

Let $U$ be the following theory, axiomatizing some properties of $(\mathbb{Z}[X])_{\ge 0}$ in the spirit of the theories $\tn{n}$ and $\sn{n}$ of Sections \ref{sec:proper-solid} resp.~\ref{subsec:tight-not-neat}:

\begin{enumerate}[(i)]
    \item\label{it:u-1} $\PA^-$,
    \item\label{it:u-2} ``$\delta$ defines a cut which is the smallest definable cut'',
    \item\label{it:u-3} ``there exists $x$ such that $h_x$ is a nontrivial automorphism of $\mathsf{id}$'',
    \item\label{it:u-4} ``there exists $x$ such that $(\iota_x)^{-1} \colon \mathsf{id} \to \mathsf{N}\mathsf{Z}$ is an isomorphism'',
    \item\label{it:u-5} $\mathsf{N }\vDash ``j \colon \mathsf{id} \to \mathsf{Z}\mathsf{N}$ is an isomorphism''.
\end{enumerate}

We observe that \ref{it:u-2} is an axiom scheme, while the
other axioms of $U$ are single statements.

Our goal is to prove the following theorem:
\begin{theorem}\label{thm:tight-not-neat-not-sem-tight}
    There is a sequential r.e.~subtheory of $\PA$ which is tight but is neither neat nor semantically tight.
\end{theorem}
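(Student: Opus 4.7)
The plan is to take $T := \PA \oplus_\psi U$, where $\psi$ is axiom \ref{it:u-3} of $U$, asserting that $h_x$ is a nontrivial automorphism for some $x$. First I verify that $T$ is a sequential r.e.\ subtheory of $\PA$. Sequentiality follows because $T \vdash \PAm$ (both $\PA$ and $U$ extend $\PAm$, so each $\PAm$-axiom arises from both disjuncts); r.e.-ness is immediate as $U$ and $\PA$ are r.e.; and $T \subseteq \PA$ once we check $\PA \vdash \neg\psi$. For the latter, a straightforward induction argument (in the spirit of Ehrenfeucht's Lemma) shows that every parameter-definable automorphism of a model of $\PA$ is the identity, and unfolding the definition of $h_x$ in a model of $\PA$ (where the smallest definable cut is the whole model) shows that $h_x$ is never a nontrivial automorphism of $\mathsf{id}$.

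Next, to show that $T$ is not neat, consider the extensions $V_1 := \PA$ and $V_2 := U$ of $T$ in $\LPA$. By Lemma \ref{lem:visser_pam} combined with axioms \ref{it:u-2} and \ref{it:u-5} of $U$---which together guarantee that in any model of $U$ the interpretation $\mathsf{N}$ produces a model of $\PA$, since the smallest definable cut of any model of $\PAm$ satisfies full induction---the interpretations $\mathsf{N}\colon U \rhd \PA$ and $\mathsf{Z}\colon \PA \rhd U$ together with the parameter-free iso $j \colon \mathsf{id} \cong \mathsf{ZN}$ in $\PA$ exhibit $V_1$ as a retract of $V_2$, while $V_1 \not\vdash V_2$ since $V_2 \vdash \psi$ and $V_1 \vdash \neg\psi$. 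For the failure of semantical tightness, take $\MM := (\mathbb{Z}[X])_{\ge 0} \vDash U \vDash T$; Lemma \ref{lem:visser_pam} provides a parametric bi-interpretation $(\mathsf{N},\mathsf{Z})$ in $\MM$ with $\MM^\mathsf{N} \cong \N \vDash T$, but no $\MM$-isomorphism between $\mathsf{N}$ and $\mathsf{id}_\MM$ exists because it would force $\N$ and $\MM$ to be $\LPA$-isomorphic, which they are not (e.g.\ $\N \vDash \PA$ while $\MM \not\vDash \PA$).

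The bulk of the work is tightness. Let $V_1, V_2 \supseteq T$ be bi-interpretable $\LPA$-extensions, which we may assume complete; each $V_i$ decides $\psi$, so it extends either $\PA$ or $U$. When both extend $\PA$, tightness of $\PA$ gives $V_1 \equiv V_2$. When the two extensions are ``mixed''---say $V_1 \supseteq \PA$ and $V_2 \supseteq U$---I derive a contradiction: any model $\mathcal{V}_1 \vDash V_1$ yields $\mathcal{V}_2 := \mathcal{V}_1^\mathsf{I} \vDash V_2$, which has a parameter-definable nontrivial automorphism $h_{X'}$ by axiom \ref{it:u-3}; transferring $h_{X'}$ along the parameter-free interpretation $\mathsf{J}$ and then along the parameter-free iso $\iota_{12}\colon \mathsf{id} \cong \mathsf{IJ}$ in $V_1$ produces a parameter-definable nontrivial automorphism of $\mathcal{V}_1 \vDash \PA$, contradicting the induction argument from the first paragraph. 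When both $V_i$ extend $U$, I pass to $W_i := V_i^\mathsf{N}$, complete extensions of $\PA$, and show that $\mathsf{ZIN}$ and $\mathsf{ZJN}$ form a parameter-free bi-interpretation of $W_1$ and $W_2$; tightness of $\PA$ then yields $W_1 \equiv W_2$, and since axiom \ref{it:u-4} ensures $V_i \equiv W_i^\mathsf{Z}$, we obtain $V_1 \equiv V_2$. The main technical obstacle here is showing that the composite iso $\mathsf{id} \cong \mathsf{ZINZJN}$ in $W_1$ can be taken parameter-free: the witness $X$ of axiom \ref{it:u-4} in $V_i$ is not parameter-free definable in $V_i$'s $\LPA$ (the automorphism $h_X$ rules this out), but viewing $V_i$ through its $\mathsf{Z}$-presentation as $W_i^\mathsf{Z}$ identifies $X$ with the $V_i$-element coded by the parameter-free-definable $W_i$-sequence $(0,1)$, and this is enough to eliminate the parameter at the $W_1$-level.
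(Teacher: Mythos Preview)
Your construction closely parallels the paper's, with two inessential differences: you switch on $\psi$ (axiom \ref{it:u-3}) rather than on $\IS{1}$, and for non-neatness you use the extensions $\PA$ and $U$ rather than the complete theories $\Th{\N}$ and $\Th{(\mathbb{Z}[X])_{\ge 0}}$ (the latter choice lets the paper avoid verifying that $\mathsf{Z}\colon\PA\rhd U$ and that $j$ witnesses the retraction provably in $\PA$, which you assert without checking).

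In the mixed case of tightness, transferring $h_{X'}$ along $\mathsf{J}$ and $\iota_{12}$ does give a parameter-definable automorphism of $\mathcal{V}_1$, but you do not justify that it is \emph{nontrivial}: a priori $h_{X'}$ could fix the domain of $\mathsf{J}$ pointwise. This is easily repaired using the other half of the bi-interpretation, namely the parameter-free $\iota_{21}\colon\mathsf{id}_{V_2}\cong\mathsf{JI}$, which commutes with $h_{X'}$ and hence forces nontriviality on $\mathcal{V}_2^{\mathsf{J}}$. The paper packages the same idea more cleanly as Lemma \ref{lem:u-pa-not-biint}, passing to the prime elementary submodel of $\mathcal{V}_1$.

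The substantive gap is in the both-$U$ case. Your plan requires a \emph{parameter-free} isomorphism in $W_1$ between $\mathsf{id}$ and the six-fold composite, and the obstacle is the step where you need $\mathsf{id}_{V_2}$ isomorphic to $\mathsf{NZ}$ inside $\mathcal{V}_2=\mathcal{V}_1^{\mathsf{I}}$: by axiom \ref{it:u-4} this requires a parameter $X'\in\mathcal{V}_2$. Your $(0,1)$-trick has a level mismatch: the sequence $(0,1)$ in $\mathcal{W}_2'=\mathcal{V}_2^{\mathsf{N}}$ corresponds via $\mathsf{Z}$ to an element of $(\mathcal{W}_2')^{\mathsf{Z}}=(\mathcal{V}_2^{\mathsf{N}})^{\mathsf{Z}}$, not of $\mathcal{V}_2$ itself, so it does not furnish the needed $X'$. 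The paper avoids this entirely by arguing semantically: Lemma \ref{lem:u-tight} proves $U$ solid via Lemma \ref{lem:indfact} and the parametric $\iota_x$ of axiom \ref{it:u-4}, exploiting that the definition of solidity allows arbitrary parameters in the witnessing isomorphism. Since solidity implies tightness, $V_1\equiv V_2$ follows at once. Your argument can be salvaged the same way: drop the syntactic insistence on parameter-freeness, work with models, and invoke solidity of $\PA$ rather than tightness.
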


The theory in question is 
${U} \oplus_{\IS{1}}{\PA}$. 
As usual, to prove a tightness-like property of the theory
(here, specifically tightness only), we need to show
the property for $U$ and to obtain a result that rules out some ``mixed cases'' of interpretations.

\begin{lemma}\label{lem:u-tight}
The theory $U$ is solid.
\end{lemma}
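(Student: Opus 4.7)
The plan is to follow the template of the proof of Lemma \ref{lem:t0n-solid}, with the roles previously played by $\gn{n}$, $\hn{n}$, and the isomorphism $i_n$ now taken over by $\mathsf{N}$, $\mathsf{Z}$, and (the inverse of) $\iota_x$, respectively. Let $\MM_1 \vDash U$, let $(\mathsf{N}^\star,\mathsf{M}^\star)$ be a retraction in $\MM_1$ with $\MM_2:=\MM_1^{\mathsf{N}^\star}\vDash U$, and let $\MM_3:=\MM_2^{\mathsf{M}^\star}$, so that there is by assumption an $\MM_1$-definable isomorphism $\MM_1\to\MM_3$. For each $i\in\{1,2,3\}$, let $\NN_i:=\MM_i^{\mathsf{N}}$ be the interpretation of the smallest definable cut of $\MM_i$, and let $\MM'_i:=\NN_i^{\mathsf{Z}}$. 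A standard argument shows that the shortest definable cut in any model of $\PAm$ satisfies full induction for all sets definable in the surrounding model, so each $\NN_i$ is a model of $\PA$; by axiom \ref{it:u-2} of $U$, $\NN_i$ really is the shortest definable cut in $\MM_i$.

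Next, the $\MM_1$-definable isomorphism $\MM_1\to\MM_3$ restricts to an $\MM_1$-definable isomorphism $\NN_1\to\NN_3$, so Lemma \ref{lem:indfact} applied with $m=0$ produces $\MM_1$-definable isomorphisms $\NN_1\to\NN_2$ and $\NN_2\to\NN_3$. Reinterpreting the first of these through $\mathsf{Z}$ yields an $\MM_1$-definable isomorphism $\MM'_1\to\MM'_2$.

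To close the loop, I will invoke axiom \ref{it:u-4}: in each $\MM_i$ there exists an element $a_i$ such that $(\iota_{a_i})^{-1}\colon\MM_i\to\MM'_i$ is an isomorphism. Since $a_2$ is an element of $\MM_2$, which is interpreted in $\MM_1$, it picks out an $(=^{\mathsf{N}^\star})^{\MM_1}$-equivalence class of elements of $\MM_1$; choosing a representative, the isomorphism $\MM_2\to\MM'_2$ becomes $\MM_1$-definable with parameters. Composing
\[\MM_1\xrightarrow{(\iota_{a_1})^{-1}}\MM'_1\longrightarrow \MM'_2\xrightarrow{\iota_{a_2}}\MM_2\]
yields the required $\MM_1$-definable isomorphism $\MM_1\to\MM_2$.

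The main delicate point, and the reason this lemma is sensitive to the choice of definition of solidity (as already flagged in the remark following Definition \ref{def_solidity}), is precisely the use of the parameters $a_1,a_2$: these are guaranteed by an internal $\exists$-axiom but need not be among the parameters used to define the retraction $(\mathsf{N}^\star,\mathsf{M}^\star)$ and the isomorphism witnessing $\mathsf{N}^\star\mathsf{M}^\star\cong\id$. So the argument goes through under Enayat's original definition, which permits arbitrary $\MM_1$-definable isomorphisms, while under the more restrictive convention of \cite{el:categoricity-like} it would break at this step; this is consistent with the parenthetical warning in the discussion of the definition.
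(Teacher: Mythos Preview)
Your proof is correct and follows essentially the same route as the paper's: pass to the shortest definable cuts $\NN_i=\MM_i^{\mathsf{N}}$, apply Lemma \ref{lem:indfact} for $m=0$ to get $\NN_1\cong\NN_2$, push through $\mathsf{Z}$ to obtain $\MM'_1\cong\MM'_2$, and then use axiom \ref{it:u-4} to close up via the $\iota_{a_i}$. Your closing paragraph correctly anticipates the paper's own Remark after the lemma about the extra parameters $a_1,a_2$ and the resulting dependence on which definition of solidity is in force.
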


\begin{proof}
 Let $\MM_1 \rhd \MM_2 \rhd \MM_3$ be models of $U$
 such that there is an $\MM_1$-definable isomorphism from $\MM_1$ onto $\MM_3$. For each $i \in \{1,2,3\}$, consider  $\NN_i:= \MM_i^{\mathsf{N}}$ and $\MM_{i}^*:= \NN_i^{\mathsf{Z}}$. Since each $\MM_i$ is a model of $U$, it follows that $\MM_1, \MM_{2}, \MM_{3}$ and $\NN_1, \NN_{2}, \NN_{3}$ satisfy the assumptions of Lemma \ref{lem:indfact} for $m=0$, so $\NN_1$ is $\MM_1$-definably isomorphic to $\NN_{2}$. Hence, $\MM_1^*$ is $\MM_1$-definably isomorphic to $\MM_2^*$. 
 
 By axiom \ref{it:u-4} of $U$, each $\MM_i$ is $\MM_i$-definably, and thus also $\MM_1$-definably,
 isomorphic to $\MM_i^*$. Composing these 
 isomorphisms with the one between $\MM_1^*$ and $\MM_2^*$,
 we obtain an $\MM_1$-definable isomorphism between
 $\MM_1$ and $\MM_2$.
\end{proof}

\begin{remark}
Note that in the proof of Lemma \ref{lem:u-tight}, the isomorphism between $\MM_1$ and $\MM_2$ needed to witness 
solidity is defined using parameters from $\MM_1$ that might not be involved in defining the interpretations between $\MM_1$, $\MM_2$, $\MM_3$ and the isomorphism between
$\MM_1$ and $\MM_3$. In any case, we will only need 
the tightness of $U$ in the remainder of our argument.
\end{remark}

\begin{lemma}\label{lem:u-pa-not-biint}
If $\MM_1 \vDash \PA$ and $\MM_2 \vDash U$, then
$\MM_1$ and $\MM_2$ are not parameter-free bi-interpretable.
\end{lemma}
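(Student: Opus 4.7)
The plan is to exploit a sharp contrast between axiom (iii) of $U$, which guarantees a nontrivial automorphism definable from a single parameter, and the classical fact that models of $\PA$ admit no nontrivial definable automorphism, even with arbitrary parameters.

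First I extract the relevant automorphism. Let $\MM_2 \vDash U$; by axiom (iii) there is $y \in \MM_2$ such that $h_y$ is a nontrivial automorphism of the identity interpretation on $\MM_2$. Since $h_y$ is defined only on elements representable as $\iota_y(a)$ with $a \in \mathsf{NZ}$, the fact that $h_y$ is total forces the same $y$ to witness axiom (iv) as well. Consequently $h_y$ is a nontrivial automorphism of $\MM_2$ definable using the single parameter $y$.

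Next, suppose toward a contradiction that $\MM_1 \vDash \PA$ is parameter-free bi-interpretable with $\MM_2$, via parameter-free interpretations $\mathsf{M}_2 : \MM_1 \rhd \MM_2$ and $\mathsf{M}_1 : \MM_2 \rhd \MM_1$ together with parameter-free isomorphisms identifying each $\MM_i$ with the appropriate composed interpretation. I unpack the isomorphism $\mathrm{Aut}(\MM_2) \cong \mathrm{Aut}(\MM_1)$ induced by the bi-interpretation (as in the remark following the definition of parameter-free bi-interpretability). The element $y \in \MM_2 \cong \MM_1^{\mathsf{M}_2}$ corresponds to an equivalence class with representative $\tilde y \in \MM_1$; the formula defining $h_y$ in $\MM_2$ translates via the parameter-free $\mathsf{M}_2$ to an $\LPA$-formula with parameter $\tilde y$, defining an automorphism of $\MM_1^{\mathsf{M}_2}$. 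Composing with $\mathsf{M}_1$ and pulling back through the parameter-free isomorphism $\MM_1 \cong \MM_1^{\mathsf{M}_2\mathsf{M}_1}$ yields a nontrivial automorphism $g$ of $\MM_1$ definable from $\tilde y$ alone.

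Finally I rule out $g$. I claim that no nontrivial automorphism of a model of $\PA$ can be definable with parameters: if $g$ is defined by $\varphi(x, z, \bar c)$, then $S = \{x : g(x) > x\}$ is definable from $\bar c$, so if $S$ is nonempty the least number principle gives $a^* = \min S$. But $g(a^*) > a^*$ combined with the fact that $g$ preserves $<$ gives $g^{-1}(a^*) < a^*$, and then $g(g^{-1}(a^*)) = a^* > g^{-1}(a^*)$ places $g^{-1}(a^*) \in S$, contradicting the minimality of $a^*$. Hence $S = \emptyset$, and symmetrically $\{x : g(x) < x\} = \emptyset$, so $g$ is the identity—contradicting its nontriviality. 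The one delicate point is the bookkeeping in the transfer of the automorphism across the bi-interpretation in the third paragraph; the remainder is a direct appeal to the axioms of $U$ and the least number principle.
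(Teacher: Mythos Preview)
Your proof is correct, but it takes a different route from the paper's. The paper argues as follows: pass to the prime (pointwise definable) elementary substructure $\KK \preccurlyeq \MM_1$; since the interpretations and isomorphisms are parameter-free, the bi-interpretation restricts to give a parameter-free bi-interpretation between $\KK$ and $\KK^{\mathsf{M}_2} \vDash U$. But $\KK$ is rigid while every model of $U$ has a nontrivial automorphism by axiom (iii), contradicting the fact (from Remark~\ref{rem:biint_iso_aut}) that parameter-free bi-interpretable structures have isomorphic automorphism groups.

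You instead work directly in $\MM_1$: you transfer $h_y$ through the bi-interpretation to obtain an $\MM_1$-definable (with one parameter) nontrivial automorphism, and then use the least number principle to show that any parametrically definable automorphism of a model of $\PA$ is the identity. Both arguments rest on the same underlying mismatch between the automorphism structure of models of $U$ and of $\PA$; the paper exploits it by shrinking to a rigid model, while you exploit it by proving a ``no definable automorphism'' lemma for $\PA$. Your approach is more hands-on and avoids the passage to a substructure, at the cost of the bookkeeping in the transfer step and the extra LNP argument; the paper's is shorter but relies on the reader recognizing that parameter-free interpretations restrict along elementary substructures. A minor remark: your observation that the $y$ from axiom (iii) also witnesses axiom (iv) is not needed for the argument---axiom (iii) alone already gives you the nontrivial automorphism $h_y$.
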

\begin{proof}
Suppose the contrary and let $(\mathsf{M}_2,\mathsf{M}_1)$ be a parameter-free bi-interpretation in $\MM_1\vDash \PA$ such that $\MM_1^{\mathsf{M}_2}\vDash U$. 
Let $\KK$ be the prime substructure of $\MM_1$. Then $\KK \preccurlyeq \MM_1$,
so $\mathsf{M}_1$ and $\mathsf{M}_2$ witness that $\mathcal{K}$ is parameter-free bi-interpretable with a model of $U$. This cannot be the case, because the automorphism group of $\mathcal{K}$ is trivial, while every model of $U$, specifically of axiom \ref{it:u-3}, carries a nontrivial automorphism.
\end{proof}

\begin{proof}[Proof of Theorem \ref{thm:tight-not-neat-not-sem-tight}]
Clearly, ${U} \oplus_{\IS{1}}{\PA}$ is an r.e.~subtheory of $\PA$, and it is sequential because it implies $\PA^-$.

We now show that it is tight. Let $V_1$, $V_2$ be bi-interpretable extensions of ${U} \oplus_{\IS{1}}{\PA}$, and let $\mathsf{V}_2$, $\mathsf{V}_1$ be interpretations witnessing the bi-interpretability. 
We claim that applying $\mathsf{V}_{3-i}$ in a model of 
$V_i + \PA$ gives rise to a model of $V_{3-i} + \PA$,
and analogously for models of $U$ instead of $\PA$.
To prove the claim, note that if $\MM \vDash V_i + \PA$, then by the choice of $\mathsf{V}_1$, $\mathsf{V}_2$
the structures $\MM$ and $\MM^{\mathsf{V}_{3-i}}$ are
in fact parameter-free bi-interpretable. 
So, by Lemma \ref{lem:u-pa-not-biint}, it must
be the case that $\MM^{\mathsf{V}_{3-i}} \vDash \PA$.
The proof for models of $U$ is similar.

By the claim, $V_1 + \PA$ is bi-interpretable with $V_2 + \PA$, and $V_1 + U$ is bi-interpretable with $V_2 + U$. 
Thus, the solidity of $\PA$ implies that $V_1 + \PA \equiv V_2 + \PA$, and Lemma \ref{lem:u-tight} implies that $V_1 + U \equiv V_2 + U$. So, $V_1 \equiv V_2$, proving tightness
of ${U} \oplus_{\IS{1}}{\PA}$.

The lack of semantical tightness is witnessed by 
the structures $\N$ and $(\mathbb{Z}[X])_{\ge 0}$,
which are both models of ${U} \oplus_{\IS{1}}{\PA}$ and are bi-interpretable by Lemma \ref{lem:visser_pam}
but are not isomorphic.
The lack of neatness is witnessed by the theories
$\Th{\N}$ and $\Th{(\mathbb{Z}[X])_{\ge 0}}$.
The former is a retract of the latter, because not
only the interpretations $\mathsf{N}$ and $\mathsf{Z}$,
but also the isomorphism $j$ between $\N$
and $(\N)^{\mathsf{Z}\mathsf{N}}$ are defined
without parameters.
\end{proof}

\subsection{Neat but not semantically tight}\label{subsec:neat-not-sem-tight}
Our final separation result takes the following form.

\begin{theorem}\label{thm:neat-not-sem-tight}
There is a sequential r.e.~theory which is neat but not semantically tight.
\end{theorem}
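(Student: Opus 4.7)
The goal is to construct a sequential r.e.~theory $T$ that has bi-interpretable non-isomorphic models (breaking semantical tightness) while every retract relation between extensions of $T$ in $\mathcal{L}_T$ forces provable equivalence. My starting point is the observation about the theory $U \oplus_{\IS{1}} \PA$ from Section~\ref{subsec:pam}: failure of semantical tightness there was witnessed by $\N$ together with $(\mathbb{Z}[X])_{\ge 0}$, which are bi-interpretable by Lemma~\ref{lem:visser_pam} but not isomorphic; failure of neatness came from the \emph{parameter-free} retract of $\Th{\N}$ by $\Th{(\mathbb{Z}[X])_{\ge 0}}$, which exists because the witnessing isomorphism $j$ works without parameters in $\N$. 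The strategy is to preserve the non-isomorphic bi-interpretability phenomenon but prevent it from being upgraded to a parameter-free retract at the theory level.

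To do this, I would weaken $U$ by dropping axioms (iii) and (iv) (which pin models down to $(\mathbb{Z}[X])_{\ge 0}$), keep $\PA^-$ together with the scheme that $\delta$ defines the smallest definable cut and the axiom $\mathsf{N} \vDash ``j \colon \mathsf{id} \to \mathsf{Z}\mathsf{N}\text{ is an isomorphism}''$, and additionally include $\exists x\,\neg\delta(x)$ to exclude $\N$ (and any model of $\PA$) as a model. Call the resulting theory $T$; it is visibly sequential (extending $\PA^-$) and r.e. Models of $T$ include all $(\mathbb{Z}[X_1,\ldots,X_k])_{\ge 0}$ for $k \ge 1$. Failure of semantical tightness would be exhibited inside $\MM := (\mathbb{Z}[X])_{\ge 0}$: the interpretations $\mathsf{N}\mathsf{Z}_2$ (where $\mathsf{Z}_2$ builds two-variable polynomials inside the standard cut) and its partner going back via $\mathsf{N}\mathsf{Z}$ form a \emph{parametric} bi-interpretation in $\MM$ whose image is a copy of $(\mathbb{Z}[X,Y])_{\ge 0}$ satisfying $T$, yet no $\MM$-definable isomorphism can exist between $\MM$ and this image since $(\mathbb{Z}[X])_{\ge 0}$ and $(\mathbb{Z}[X,Y])_{\ge 0}$ have different automorphism groups and hence are not isomorphic as $\LPA$-structures at all.

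For neatness I would refine $T$ if needed, modeled on the disjoint-sum approach from Sections~\ref{subsec:solid-non-interpret} and~\ref{subsec:Ali's question}. The idea is to axiomatize $T$ as $\bigoplus_k (W_k \mid \varphi_k)$, where $\{W_k\}_{k \ge 1}$ is a retract-disjoint family of theories (in the sense of Section~\ref{subsec:solid-non-interpret}) each of which is neat and admits bi-interpretable non-isomorphic models along the lines of the previous paragraph. Neatness of the sum would then follow from a neatness analogue of Proposition~\ref{pres_solid_disj_sum}: if the family of components is retract-disjoint and each component is neat, then the sum is neat. The components $W_k$ are obtained as variants of the weakened $U$ above, each ``indexed'' so that the family separates by an axiom $\varphi_k$ analogous to $\IS{k} \land \neg \IS{k+1}$, and each $W_k$ is arranged to be bi-interpretable with $\CT^k[\PA]$ (via interpretations in the spirit of $\gn{n}$ and $\hn{n}$) so that retract-disjointness transfers from the family $\{\CT^k[\PA]\}$ (Lemma~\ref{lem:truthin}) via Corollary~\ref{cor_biint_pres_retract}.

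The main obstacle will be the simultaneous demand that each component be neat but not semantically tight: solid theories like $\CT^k[\PA]$ are automatically semantically tight by Corollary~\ref{cor:solid_ctn}, so if $W_k$ were literally bi-interpretable with $\CT^k[\PA]$ then $W_k$ would inherit semantical tightness via Corollary~\ref{cor_biint_pres_solid}, defeating the purpose. The subtle point is to arrange the bi-interpretability of $W_k$ with $\CT^k[\PA]$ only at the \emph{theory} level (so as to transfer neatness and retract-disjointness) while leaving enough slack among the \emph{models} of $W_k$ to allow the polynomial-ring-style bi-interpretable non-isomorphic pairs to exist. Calibrating the axiomatization of $W_k$ to walk this line, using the automorphism-group obstruction from Lemma~\ref{lem:visser_pam} to prevent parameter-free upgrades of parametric bi-interpretations, is the heart of the construction.
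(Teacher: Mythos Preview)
Your proposal takes a route entirely different from the paper's, and it has a genuine gap that you yourself flag but do not close.

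The paper's construction is much more direct: it works in the language $\LPA \cup \{c\}$ and takes $T = \PA + p(c)$, where $p(x)$ is an r.e.~partial type (built using flexible formulas) with the properties that (i) every realization is undefinable and (ii) any two realizations in elementarily equivalent models of $\PA$ have the same complete $\LPA$-type. Failure of semantical tightness is witnessed inside a single pointwise-$\{c,d\}$-definable $\MM \vDash \PA$ containing two distinct realizations $c,d$ of $p$: the expansions $(\MM,c)$ and $(\MM,d)$ are trivially bi-interpretable with parameters, but Ehrenfeucht's Lemma forbids any automorphism of $\MM$ moving $c$ to $d$. Neatness comes from the solidity of $\PA$: given a retraction between extensions $U,V$ of $T$, for any $(\MM,c) \vDash U$ the interpreted $(\NN,d) \vDash V$ has $\NN$ isomorphic to $\MM$ via the canonical map $\iota$ of Lemma~\ref{ultimate_dedekind}, which is definable from $c$ alone; then $\iota^{-1}(d)$ is definable from $c$ and has the same $\LPA$-type as $c$ by property (ii), so Ehrenfeucht's Lemma forces $\iota^{-1}(d) = c$. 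No disjoint sums, no polynomial rings, no iterated truth.

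The decisive problem with your plan is the tension you identify at the end. You want each component $W_k$ to be bi-interpretable with $\CT^k[\PA]$ so that retract-disjointness and neatness transfer, yet simultaneously not semantically tight. But bi-interpretability of \emph{theories} is precisely a uniform, model-by-model statement: if $W_k$ and $\CT^k[\PA]$ are bi-interpretable as theories, then every model of $W_k$ is bi-interpretable with a model of $\CT^k[\PA]$, and Corollary~\ref{cor_biint_pres_solid} forces $W_k$ to be solid, hence semantically tight. There is no coherent sense in which the bi-interpretability could hold ``only at the theory level'' while leaving room for non-isomorphic bi-interpretable models of $W_k$. So the architecture you propose for securing neatness (via retract-disjoint families bi-interpretable with the $\CT^k[\PA]$'s) directly destroys the non-semantical-tightness you are trying to achieve. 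Your earlier, simpler candidate (the weakened $U$ with $\exists x\,\neg\delta(x)$) is not shown to be neat either; you defer that to the disjoint-sum refinement, which is exactly where the contradiction lives.
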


This time, the theory in question will be a strengthening of
$\PA$ formulated in the language extending $\LPA$ by a fresh constant symbol $c$. It will take the form ${\PA} + {p(c)}$,
where $p(x)$ is a partial type with some particular properties.

\begin{lemma}\label{lem:type}
There exists a computable partial type $p$ over $\PA$ such that:
\begin{enumerate}[(i)]
    \item\label{lem-type-i} for all $\MM, \NN \vDash \PA$ and elements $a \in \MM$, $b \in \NN$ realizing $p$,
    \begin{equation*}
    \text{if $\MM \equiv \NN$, then $(\MM,a) \equiv (\NN,b)$;}
    \end{equation*}
    \item\label{lem-type-ii} the theory $\PA \cup \{a \neq b\} \cup p(a) \cup p(b)$ is consistent.
\end{enumerate}
\end{lemma}

The lemma can be obtained from known constructions of indiscernible types (see the proof of Theorem 3.1.2 in \cite{kossak-schmerl} and the Remark following it),
but in order to make the paper more self-contained,
we give a relatively simple proof based on flexible formulas.

\begin{proof}[Proof of Lemma \ref{lem:type}]
Let $\varphi_0(x),\varphi_1(x),\ldots$ be a computable enumeration of unary $\LPA$ formulas. We set
\begin{equation*}
    p(a)\coloneq\{\varphi_n(a)\leftrightarrow\forall y\,\xi_n(y)\colon\,n\in\omega\}\text{,}
\end{equation*}
for a suitably chosen computable sequence of flexible arithmetical formulas $\xi_n$. This ensures that Condition~\ref{lem-type-i} and the computability requirement are met. To guarantee the fulfillment of \ref{lem-type-ii}, we directly design the construction so that every finite fragment of $p$ is consistently realizable by infinitely many elements.

Let
\begin{equation*}
    \alpha_n=\bigwedge_{k<n}\big(\varphi_k(x)\leftrightarrow\forall y\,\xi_k(y)\big)\text{\qquad and\qquad}A_n=\left\{\exists^{\geqslant m}x\,\alpha_n(x)\colon\,m\in\omega\right\}\cup\PA\text{,}
\end{equation*}
where $\xi_n$ is $\Sigma_{r(n)}$-flexible over $A_n$, with $r(n)$ being the least $\ell$ such that the formula $\alpha_n(x)\land\varphi_n(x)$ is $\Sigma_\ell$. By Theorem~\ref{tw_flex_main}, the sequence $(\xi_n)_{n\in\omega}$ can be chosen to be computable. 
To establish the lemma, it suffices to show that the consistency of $A_n$ implies the consistency of $A_{n+1}$ for every $n\in\omega$, since $A_0=\PA$, which is assumed to be consistent.

For the sake of contradiction, suppose that $A_n$ is consistent but $A_{n+1}$ is inconsistent. Then both of the following theories:
\begin{gather*}
    \PA\;\cup\;\left\{\exists^{\geqslant m}x\,(\alpha_n(x)\land\varphi_n(x))\land\forall y\,\xi_n(y)\colon\,m\in\omega\right\}\text{\quad and}\\
    \PA\;\cup\;\left\{\exists^{\geqslant m}x\,(\alpha_n(x)\land\lnot\varphi_n(x))\land\lnot\forall y\,\xi_n(y)\colon\,m\in\omega\right\}\text{,}
\end{gather*}
are inconsistent, and thus there are $m$ and $m'$ such that:
\begin{equation}\label{eq:indscr_type}
\begin{aligned}
    \PA\;&\vdash\;\Big(\exists^{\geqslant m}x\,(\alpha_n(x)\land\varphi_n(x))\Big)\,\rightarrow\,\lnot\forall y\,\xi_n(y)\text{,}\\
    \PA\;&\vdash\;\Big(\exists^{\geqslant m'}x\,(\alpha_n(x)\land\lnot\varphi_n(x))\Big)\,\rightarrow\,\forall y\,\xi_n(y)\text{.}
\end{aligned}
\end{equation}
Since $\PA\subseteq A_n$ and
\begin{equation*}
    A_n\;\vdash\;\lnot\Big(\exists^{\geqslant m}x\,(\alpha_n(x)\land\varphi_n(x))\Big)\,\rightarrow\,\Big(\exists^{\geqslant m'}x\,(\alpha_n(x)\land\lnot\varphi_n(x))\Big)\text{,}
\end{equation*}
from (\ref{eq:indscr_type}), we obtain that
\begin{equation*}
    A_n\;\vdash\;\Big(\exists^{\geqslant m}x\,(\alpha_n(x)\land\varphi_n(x))\Big)\,\leftrightarrow\,\lnot\forall y\,\xi_n(y)\text{.}
\end{equation*}
It follows that
\begin{equation*}
    A_n\;\vdash\;\lnot\Big(\Big(\exists^{\geqslant m}x\,(\alpha_n(x)\land\varphi_n(x))\Big)\,\leftrightarrow\,\forall y\,\xi_n(y)\Big)\text{,}
\end{equation*}
and thus that
\begin{equation*}
    A_n\;\vdash\;\lnot\forall y\,\Big(\Big(\exists^{\geqslant m}x\,(\alpha_n(x)\land\varphi_n(x))\Big)\,\leftrightarrow\,\xi_n(y)\Big)\text{.}
\end{equation*}
This contradicts the $\Sigma_{r(n)}$-flexibility of $\xi_n$ (following from the consistency of $A_n$).
\end{proof}

\begin{proof}[Proof of Theorem \ref{thm:neat-not-sem-tight}]
Let $T$ be ${\PA} + {p(a)}$, where $p(x)$ is the type
provided by Lemma \ref{lem:type}. Clearly, $T$ is an r.e.~theory.

We claim that $T$ is not semantically tight, even in the weak sense of \cite{fh:bi-interpretation-set} 
(see one of the remarks following Definition \ref{def:sem-tight}). Indeed, let $(\MM,a,b)$ be a model in which $a$ and $b$ are distinct elements realizing $p$, and let $\mathcal{N}:= \mathcal{K}(\mathcal{M},a,b)$, i.e. the submodel of $\mathcal{M}$ with the universe consisting of the elements which are definable in $\MM$ with the parameter $\langle a,b\rangle$. Since $\mathcal{N}$ is an elementary submodel of $\mathcal{M}$, it follows that both $(\mathcal{N},a)$, $(\mathcal{N},b)$ are models of $T$. Clearly, $(\NN,a)$ and $(\NN,b)$ are bi-interpretable (using $a$ and $b$ as parameters). However, it follows from Ehrenfeucht's Lemma (see Section \ref{sec:prelim}) that $\NN$ admits no non-trivial automorphisms. Hence $(\NN,a)$ and $(\NN,b)$ witness that $T$ is not semantically tight.

We now argue that $T$ is neat. Take any two extensions $U$ and $V$ of $T$ and assume that $\mathsf{V}:U\rhd V$ and $\mathsf{U}: V\rhd U$ witness that $U$ is a retract of $V$. In particular, $\mathsf{V}\mathsf{U}$ is $U$-provably isomorphic to $\mathsf{id}_U$. Take any $(\MM,a)\vDash U$. 
We claim that $(\MM,a) \simeq (\MM,a)^{\mathsf{V}}$,
so in particular $(\MM,a)\vDash V$, which will suffice to prove neatness. 

Consider $(\NN,b):= (\MM,a)^{\mathsf{V}}$ and $(\MM^*, a^*):= (\NN,b)^{\mathsf{U}}$. Since $\mathsf{V}$ and $\mathsf{U}$ witness the retraction between $U$ and $V$, it follows that $(\MM^*,a^*)$ is $(\MM,a)$-definably isomorphic to $(\MM,a)$. By the solidity of $\PA$, there is also an $\MM$-definable
isomorphism $\iota$ between $\MM$ and $\NN$.
Moreover, since $\iota$ is in fact the map from Lemma \ref{ultimate_dedekind}, and the interpretation $\mathsf{V}$
of $(\NN,b)$ uses no parameters from $\MM$ other than $a$, the definition of $\iota$ also has $a$ as its unique parameter. This means that the element $\iota^{-1}(b)$ of $\MM$ is definable in $\MM$ from $a$.
 
Both $(\MM,a)$ and $(\NN,b)$ are models of $T$, so the properties of the type $p(x)$ imply that the $\LPA$-types of $a$ and $b$ are determined by the theories of $\MM$ and $\NN$, which
are the same because $\MM$ and $\NN$ are isomorphic. 
Hence, $\iota^{-1}(b)$ is not only definable from $a$ in $\MM$, but also has the same arithmetical type as $a$. 
By Ehrenfeucht's Lemma (see Section \ref{sec:prelim}), we obtain $\iota^{-1}(b)=a$, which means that $\iota$ is an isomorphism between $(\MM,a)$ and $(\NN,b)$; hence $(\MM,a) \simeq (\NN,b)$ as claimed.
\end{proof}

\section{Solid subtheories of stronger systems}\label{sec:soa}

The focus of this paper is on subtheories of first-order arithmetic. However, the question on the existence of solid proper subtheories asked in \cite{enayat:variations} 
concerned not only $\PA$, but also other foundationally relevant axiom {systems}, like second-order arithmetic $\Zt$ and ZF set theory. 

As in the case of $\PA$, we feel that, in order to avoid trivial examples, in the case of more powerful systems the question should also be not about proper subtheories as such, but proper subtheories containing a sufficiently strong characteristic fragment of a given axiom scheme, or even better about arbitrarily strong subtheories.
In the present paper, we do not take up that problem at length. However, we observe that our methods, combined 
in one case with some recent results from the literature, provide some
proper solid subsystems of set theory
and second-order arithmetic. 

As a first example, we sketch the proof of the following proposition (below, $\ZF_{\Pi_n}$ denotes the set of $\Pi_n$ consequences of $\ZF$). 
We expect
that similar methods would work in the context of restricted fragments of $\Zt$, although we have not verified the details.
\begin{prop}\label{prop:set-theory}
Assume that $\ZF$ is consistent. For every $n$, there is a proper solid subtheory of $\ZF$ extending $\ZF_{\Pi_{n+1}}$.
\end{prop}
\begin{proof}
Clearly we can assume that $n \ge 3$. We adjust the main construction from Section \ref{sec:solidity-below}. The structure $\mathcal{H}$ now becomes an arithmetically defined model of ${\ZF} + {\mathnormal{V\!=\!L}}$, obtained as previously by an arithmetized Henkin-style completeness proof. Unlike in the construction from Section \ref{sec:proper-solid}, 
this time we do not need to code any additional subsets of $\mathbb{N}$ in $\mathcal{H}$. As previously, $\mathcal{K}_{n}(\mathcal{H})$ stands for the submodel of $\mathcal{H}$ consisting of $\Sigma_{n}$-definable elements of $\mathcal{H}$ (where we consider $\Sigma_n$ formulas in the language of set theory). As shown in \cite[Theorem~3]{enalel_categoricity_corr}, for $n\geq 3$ it holds that $\mathcal{K}_{n}(\mathcal{H})\preceq_{n}\mathcal{H}$ but $\mathcal{K}_{n}(\mathcal{H})\vDash \neg\mathrm{Coll}(\Sigma_{n+1})$, where $\mathrm{Coll}(\Sigma_{n+1})$ is a $\Pi_{n+4}$ sentence expressing the collection principle for $\Sigma_{n+1}$-definable relations. By the partial elementarity, $\mathcal{K}_{n}(\mathcal{H})\vDash \ZF_{\Pi_{n+1}}$. The proof  of \cite[Theorem 2a)]{enalel_categoricity_corr}  shows that $\mathcal{K}_{n}(\mathcal{H})$ is bi-interpretable with the standard model of arithmetic $\mathbb{N}$. Let $\mathsf{N}_n^s$, $\mathsf{K}_n^s$, $i_n^s$ and $j_n^s$ (where $s$ stands for ``sets'') be defined analogously to $\mathsf{N}_{n-1}$, $\mathsf{K}_{n-1}$, $i_{n-1}$, $j_{n-1}$, respectively (of course, $\mathsf{N}_n^s$ is now applied in a set-theoretic universe, where it defines a cut in
the natural numbers of that universe). 
We can construct a theory $\mathrm{IA}(n)$
which is defined like $\mathrm{IT}(n-1)$ with the exception that $\mathsf{N}_{n-1}$, $\mathsf{K}_{n-1}$, $i_{n-1}$, $j_{n-1}$ get replaced with their $s$-variants and
\begin{itemize}
    \item in (i) we take $\ZF_{\Pi_{n+1}}$ instead of $\IS{n}+\exp$ and $\neg\mathrm{Coll}(\Sigma_{n+1})$, instead of $\neg \BS{n+1}$;
    \item in (iii) we say that $\mathsf{N}_n^s\vDash {\PA} + {\Con{\ZF}}$.
\end{itemize}
Then, as in the proof of Proposition \ref{prop:biint_truth}, we can show that $\mathrm{IA}(n)$ is bi-interpretable with ${\PA}+{\Con{\ZF}}$. Hence, $\mathrm{IA}(n)$ is solid. Now, our target theory can be defined as 

\[{(\ZF_{\Pi_{n+1}}}\cup {\mathrm{IA}(n)) \oplus_{\mathrm{Coll}(\Sigma_{n+1})}{\ZF}}.\]

Both  $\mathrm{IA}(n)$ and $\ZF$ are solid. 
So, by (an obvious variant of) Proposition \ref{pres_solid_disj_sum}, it remains to show that the family $\{ \mathrm{IA}(n), \ZF\}$ is retract-disjoint. This however, by Corollary \ref{cor_biint_pres_retract}, is equivalent to the retract-disjointness of the family $\{{\PA} + {\Con{\ZF}}, \ZF\}$. The latter can be proved as follows: assume first that $\MM\vDash \ZF$ and $(\mathsf{N},\mathsf{M})$ is a retraction in $\MM$ such that $\MM^{\mathsf{N}}\vDash \PA$. Arguing as in the proof of Lemma \ref{lem:indfact}, we see that $\omega^{\MM}$ and ${\MM^{\mathsf{N}}}$
are $\MM$-definably isomorphic. However then it follows that $\MM$ sees a definable bijection between its universe and a subset of its $\omega$, which contradicts replacement. So assume now that $\NN\vDash {\PA} + {\Con{\ZF}}$, that $(\mathsf{M},\mathsf{N})$ is a retraction in $\NN$ 
and that $\NN^{\mathsf{M}}\vDash \ZF$. Then, as above, 
${\NN}$ and $\omega^{\NN^{\mathsf{M}}}$ 
are $\NN$-definably isomorphic. However, $\NN^{\mathsf{M}}$ carries a satisfaction
predicate for $\omega^{\NN^{\mathsf{M}}}$, so $\NN$ would be able to pull it back through the isomorphism, contradicting Tarski's undefinability of truth theorem.
\end{proof}

Let us note that the theories constructed 
in Proposition \ref{prop:set-theory} 
do not prove the full induction scheme for the natural numbers. In general it seems much harder to construct proper solid subtheories of a solid theory $T$ that would contain some prescribed $T$-provable scheme  (such as e.g.~full induction in the context of subsystems of $\Zt$ or full comprehension in the context of $\ZF$). Nevertheless, to illustrate what can be done using our
methods in a rather straightforward manner, 
we provide a simple example of a proper solid subtheory of $\Zt$ containing arithmetical comprehension and full induction.

Recall that $\ACA_0'$ is the theory that extends $\ACA_0$ by the axiom ``for every set $X$ and every number $k$, the set $X^{(k)}$ exists'', and $\ACA'$ is $\ACA_0'$ plus the full induction scheme for the language of second-order arithmetic. 

\begin{prop}
There exists an r.e.~solid proper subtheory of $\Zt$
extending $\ACA'$.
\end{prop}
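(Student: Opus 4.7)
The proof transposes the construction from Section~\ref{sec:proper-solid} (Theorem~\ref{thm:proper-solid}) from the first-order setting of $\PA$ to the two-sorted setting of $\Zt$, with $\IS{n}+{\neg\BS{n+1}}$ replaced by an analogous fragment of $\Zt$ and pointwise $\Sigma_{n+1}$-definable models replaced by pointwise $\Sigma^1_{n+1}$-definable ones. Fix $n \geq 1$ large enough that pointwise $\Sigma^1_{n+1}$-definable substructures of models of $\Pi^1_{n+1}\text{-}\CA$ enjoy enough $\Sigma^1_{n+1}$-elementarity to satisfy the full $\mathcal{L}_2$-induction scheme together with iterated arithmetical comprehension, i.e., satisfy $\ACA'$.

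The first step is to construct an analog $\widetilde{T}$ of $\tn{n}$ at the level of second-order arithmetic. By a routine adaptation of Theorem~\ref{tw_flex_main}\ref{it:tw_flex_main-1} to the language $\mathcal{L}_2$ (using partial truth predicates for the $\mathcal{L}_2$-hierarchy in place of the arithmetical one), fix a $\Sigma^1_1$-formula $\xi_n$ that is $\Sigma^1_1$-flexible over $\Pi^1_{n+1}\text{-}\CA$. Mimicking $\hn{n}$ and $\gn{n}$ from Section~\ref{sec:proper-solid}, define an interpretation $\mathsf{H}$ inside the obvious ``truth theory'' $\CT[\Zt]$ (defined by copying Definition~\ref{defn:ct-n} with the Tarskian clauses governing $\mathcal{L}_2$-formulas, including second-order quantifiers) that builds a Henkin model of $\Pi^1_{n+1}\text{-}\CA \cup \{\xi_n(\num{k}) : P(\num{k})\} \cup \{\neg\xi_n(\num{k}) : \neg P(\num{k})\}$ and passes to its pointwise $\Sigma^1_{n+1}$-definable substructure $\KK$, together with an inverse interpretation $\mathsf{G}$ that recovers the model of $\CT[\Zt]$ from $\KK$ as the smallest definable cut equipped with the truth predicate given by $\xi_n$. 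Flexibility of $\xi_n$ and the standard pointwise definability arguments ensure that $\KK$ satisfies $\ACA' + {\neg\Pi^1_{n+1}\text{-}\CA}$ and has the standard cut as its shortest definable cut. Let $\widetilde{T}$ axiomatize the features of $\KK$ in exact parallel with axioms~\ref{it:t0n-1}--\ref{it:t0n-5} of $\tn{n}$. By the direct analogs of Proposition~\ref{prop:biint_truth} and Lemma~\ref{lem:indfact} (applied with $\CT[\Zt]$ in place of $\CT[\PA]$), $\widetilde{T}$ is bi-interpretable with $\CT[\Zt]$, which is solid by the analog of Corollary~\ref{cor:solid_ctn}; hence $\widetilde{T}$ is solid by Corollary~\ref{cor_biint_pres_solid}.

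The desired subtheory of $\Zt$ is the disjunctive combination
\[ T := \ACA' + ({\Zt} \mid \Pi^1_{n+1}\text{-}\CA) + (\widetilde{T} \mid {\neg\Pi^1_{n+1}\text{-}\CA}). \]
It is clearly r.e., extends $\ACA'$, and fails to prove $\Pi^1_{n+1}\text{-}\CA$, so is a proper subtheory of $\Zt$. Solidity of $T$ follows from Proposition~\ref{pres_solid_disj_sum} applied with $U_0 := \widetilde{T}$, $V := \ACA'$, and $\varphi_0 := {\neg\Pi^1_{n+1}\text{-}\CA}$; the only nontrivial hypothesis to check is retract-disjointness of $\{\Zt, \widetilde{T}\}$, which is the second-order analog of Lemma~\ref{lem:no-mix-ct-pa}: any retraction between a model of $\Zt$ and a model of $\widetilde{T}$ would, via the internal interpretation of $\CT[\Zt]$ in $\widetilde{T}$, produce a definition of satisfaction for $\mathcal{L}_2$-formulas inside a model of $\Zt$, contradicting Tarski's undefinability of truth. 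The main obstacle is purely technical: lifting the infrastructure of Section~\ref{sec:groundwork}---flexible formulas, the formalized categoricity argument of Lemma~\ref{ultimate_dedekind}, and iterated Tarskian truth---from the arithmetical hierarchy to the $\mathcal{L}_2$-hierarchy. Since $\Zt$ is sequential and supports the standard coding of syntax, these adaptations are conceptually routine but require the partial truth predicates, arithmetized Henkin completeness, and cut arguments to be reworked in terms of $\Sigma^1_n$/$\Pi^1_n$ rather than $\Sigma_n$/$\Pi_n$.
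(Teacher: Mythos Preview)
Your approach is genuinely different from the paper's, and there is a real gap in it.

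The paper does \emph{not} transpose the Section~\ref{sec:proper-solid} machinery to the second-order setting. Instead it uses a much more direct construction: let $U$ be $\ACA'$ together with the single axiom ``every set is Turing-reducible to some $0^{(k)}$'' (so models of $U$ believe their second-order part consists exactly of the arithmetical sets), and take $U \oplus_{\Pi^1_1\text{-}\CA_0} \Zt$. Solidity of $U$ is verified by the usual argument for $\PA$ on the first-order parts, extended to the second-order parts by mapping $\{e_1\}^{0^{(k_1)}}$ to $\{e_2\}^{0^{(k_2)}}$; retract-disjointness of $\{U,\Zt\}$ follows because a model of $\Zt$ contains a set coding satisfaction for the (internally arithmetical) second-order part of any model of $U$, so a retraction would violate Tarski's theorem. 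No flexible formulas, Henkin models, or $\CT[\Zt]$ are needed.

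The gap in your argument is the opening move: you ask for $n$ ``large enough'' that the pointwise $\Sigma^1_{n+1}$-definable substructure of a model of $\Pi^1_{n+1}\text{-}\CA$ satisfies the \emph{full} $\mathcal{L}_2$-induction scheme. No finite $n$ achieves this. Exactly as in the first-order case, where $\mathcal{K}_{n+1}(\mathcal{H})$ satisfies $\IS{n}$ but fails $\BS{n+1}$, the $\Sigma^1_{n+1}$-definable substructure will enjoy $\Sigma^1_{n+1}$-elementarity in $\mathcal{H}$ and hence satisfy induction up to roughly level $n$ of the analytic hierarchy, but it will fail the analogue of $\BS{n+1}$ (the map sending each element to its least $\Sigma^1_{n+1}$ definition is cofinal in the standard cut, violating collection at that level). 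Since the full $\mathcal{L}_2$-induction scheme is unbounded in the analytic hierarchy, the resulting $\mathcal{K}$ will not model $\ACA'$, and your $\widetilde{T}$ does not extend $\ACA'$ as required. The paper's choice of $U$ sidesteps this entirely: $(\omega,\mathrm{ARITH})$ is a model of $U$ in which full induction holds trivially because the first-order part is standard, and $U$ itself explicitly contains $\ACA'$.
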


\begin{proof}
Let $U$ be the theory of those models of $\ACA'$ that consider themselves to consist of the arithmetical sets. In other words, $U$ is $\ACA'$ plus the axiom \[\forall X\, \exists k\, (X \text{ is Turing-reducible to } 0^{(k)}).\]
We claim that ${U} \oplus_{\Pi^1_1\text{-}\mathrm{CA}_0} \Zt$ is solid.

By \cite{enayat:variations}, $\Zt$ is solid. It is also easy to show that $U$ is solid: if $\MM_1 \rhd \MM_2 \rhd \MM_3$ are models of $U$ and we are given an $\MM_1$-definable isomorphism between $\MM_1$ and $\MM_3$, then an argument just like the one for $\PA$ shows that there is an $\MM_1$-definable isomorphism  between the first-order parts of $\MM_1$ and $\MM_2$ (the argument makes use of the full induction scheme available in $U$, because the interpretations between the models might not be first-order definable). This extends to the second-order parts in the natural way: 
if $e_1, k_1 \in \MM_1$ 
are mapped by the first-order isomorphism to $e_2, k_2 \in \MM_2$, respectively,
then map the set computed by $e_1$ with the oracle $0^{(k_1)}$ (i.e. $\{e_1\}^{0^{(k_1)}}$) to the set computed by $e_2$ with the oracle $0^{(k_2)}$ (i.e. $\{e_2\}^{0^{(k_2)}}$). 
We can prove by induction on $k_1$ that this is an embedding of the second-order universes, and it is surjective because $\MM_2$ satisfies $U$.

It remains to show that the family $\{U, \Zt\}$ is retract-disjoint. This is similar to the proofs of Lemma \ref{lem:no-mix-ct-pa} and Proposition \ref{prop:set-theory}. If say $\MM \vDash U$
is a retract of $\NN \vDash \Zt$, then again by the usual argument the first-order universes of $\MM$ and $\NN$ are definably isomorphic. But the second-order universe of $\NN$ contains a set that is a definition of satisfaction for second-order formulas in $\MM$, because all sets in $\MM$ are internally arithmetical. We could use the isomorphism between the first-order universes to transfer this definition to $\MM$, contradicting Tarski's theorem. The argument for the case when a model of $\Zt$ is a retract of a model of $U$ is analogous.
\end{proof}

By a somewhat more involved argument in a similar spirit (using finite iterations of hyperjump in the place of jump),
we can prove the solidity of a proper fragment of $\Zt$
containing $\Pi^1_1$-comprehension (and full induction).
Solid proper subtheories of $\Zt$ containing fragments at the level of $\Pi^1_2$-comprehension and beyond are left as a possible topic for future work.

\section{Conclusions and open problems}\label{sec:conclusion}

The work presented in this paper provides significant new insight into the behaviour of solidity and similar properties for subtheories of first-order arithmetic, as well as into the precise relations between the properties.

When it comes to solid subtheories of $\PA$, we were able to not only show the existence of relatively strong solid proper subtheories of $\PA$, but also to provide examples that are strictly below $\PA$ in terms of interpretability rather than just provability. Still, it seems that a piece of the picture remains missing.

Recall the Remark at the end of Section \ref{subsec:solid-non-interpret}, pointing out that the reason why the theories $TD_n$ defined in that section fail to interpret $\PA$ is that they are unable to make an infinite case distinction, but each model of each $TD_n$ actually interprets a model of $\PA$. In fact, among the solid subtheories of $\PA$ that we are able to come up with, all the ones containing a reasonable amount of arithmetic (we leave aside trivial counterexamples like ``either $\PA$ holds or the universe has one element'') have the property that each of their models interprets a model of $\PA$. Thus, the following question seems to be of interest.

\begin{question}\label{q:model-not-interpret}
Can a solid subtheory of $\PA$ containing $\EA$ have a model that does not interpret any model of $\PA$?
\end{question}

A potential negative answer to Question \ref{q:model-not-interpret} could be interpreted as meaning
that, in an appropriately weakened sense, $\PA$ is a minimal solid theory after all.

We also mention two questions in a similar spirit originally raised by other authors. One of the questions was already asked by Enayat in \cite{enayat:variations}:

\begin{question}
Is there a consistent finitely axiomatizable solid sequential theory?
\end{question}

The other was suggested by Fedor Pakhomov (private communication): 

\begin{question}\label{q:solid-interpreted-in-isn}
Is there a solid sequential theory that is interpretable in $\IS{n}$, for some $n$?
\end{question}

Note that a positive answer to Question \ref{q:solid-interpreted-in-isn} implies a positive answer to Question \ref{q:model-not-interpret}, because (for G\"odel-style reasons) $\IS{n}$ has models that do not interpret any model of $\PA$.

Turning now to the topic of relations between tightness, solidity and the other notions, since solidity is the strongest of the four categoricity-like properties considered and tightness the weakest, \emph{a priori} there could be up to six combinations of the properties. These combinations correspond to rows of Table \ref{table:separations} below and are listed in the first column. 

Before we started our work, all theories that had been classified were either solid or not even tight. We were able to come up with examples witnessing some separations between the properties, including a theory that is tight but has none of the stronger properties and a theory that is neat but has neither of the semantical properties. All the separations we obtained can be witnessed by theories that are at least sequential, although separating examples that we managed to classify exactly do not have arbitrary arithmetical strength.
Table \ref{table:separations} lists the combinations of properties known to occur based on results up to and including the present paper, with references to sections of this paper or to earlier work, as appropriate.

\begin{table}[htbp]
\centering
 \begin{tabular}{|c | c | c|} 
  \cline{2-3}
   \multicolumn{1}{c|}{} & sequential & arbitrarily strong below $\PA$ \\  
  \hline
  not tight & \cite{enayat:variations} & \cite{enayat:variations} \\ 
  \hline
  tight only & Sec.~\ref{subsec:pam} & ? \\
  \hline
  neat but not sem.~tight & Sec.~\ref{subsec:neat-not-sem-tight} & ? \\
  \hline
  sem.~tight but not neat & ? &  ? \\
  \hline
  sem.~tight and neat only & ? & ? \\  
  \hline
  solid & \cite{enayat:variations} & Sec.~\ref{sec:proper-solid} \\
  \hline
 \end{tabular}
 
\caption{Possible combinations of categoricity-like properties discovered up to and including the present paper.}

\label{table:separations}

\end{table}

Recently, the first author developed a new method that makes it possible to show that in fact all six combinations corresponding to rows of Table \ref{table:separations}
are possible, as witnessed by variants of finite set theory.
These advances will be reported in a separate work \cite{gruza:separations}. Many separations between tightness-like properties, including some not considered in this paper, were also recently obtained by Elliot Glazer (private communication), mostly in the context of (infinite) set theory.

One example considered in the present paper that is not represented in Table \ref{table:separations} is the
family of tight but not neat theories $S_n$ 
studied in Section \ref{subsec:tight-not-neat}.
These theories could fill either the second or the fourth row of Table \ref{table:separations}, depending on whether they are semantically tight. Thus, we  ask:

\begin{question}
Are the theories $S_n$ from Section \ref{subsec:tight-not-neat} semantically tight?
\end{question}

\section*{Acknowledgements}

The authors would like to thank Ali Enayat for insightful discussions, for his comments on earlier versions of this paper, and for asking the question that led to Theorem \ref{main_thm_solid_inf-below_PA}. We are also indebted to Konstantinos Papafilippou for fruitful conversations regarding the proof of Lemma \ref{lem:type}, and to Elliot Glazer for a careful explanation of his recent work on tightness-like properties.

The work of the first and third authors was supported by  grant no.~2022/46/E/HS1/00452 of the National Science Centre, Poland. The work of the second author was supported by grant no.~2023/49/B/ST1/02627 of the National Science Centre, Poland.


\bibliographystyle{plain}
\bibliography{solidity}

\end{document}